\def\RMN#1{\uppercase\expandafter{\romannumeral#1}}
\newcommand{\proj}{\operatorname{Proj}}
\newcommand{\spec}{\operatorname{Spec}}
\newcommand{\rank}{\operatorname{rank}}
\newcommand{\Hom}{\operatorname{Hom}}
\newcommand{\G}{\operatorname{G_0}} 
\newcommand{\subq}{_{\Bbb Q}}
\newtheorem{Theorem}{Theorem}[section]
\newtheorem{Lemma}[Theorem]{Lemma}
\newtheorem{Corollary}[Theorem]{Corollary}
\newtheorem{Definition}[Theorem]{Definition}
\newtheorem{Remark}[Theorem]{Remark}
\newtheorem{Example}[Theorem]{Example}
\newtheorem{thm}{Theorem}[section]
\newtheorem{lem}[thm]{Lemma}
\newtheorem{prop}[thm]{Proposition}
\newtheorem{cor}[thm]{Corollary}
\newtheorem{conj}[thm]{Conjecture}
\newcommand{\tensor}{\otimes}
\DeclareMathOperator{\Pic}{Pic}
 \DeclareMathOperator{\Tor}{Tor}
 \DeclareMathOperator{\Ext}{Ext}
 \DeclareMathOperator{\Se}{S}
 \DeclareMathOperator{\CH}{CH}
 \DeclareMathOperator{\pd}{pd}
 \DeclareMathOperator{\height}{height}
 \DeclareMathOperator{\depth}{depth}
\def\syz{\mathrm{\Omega}}
\DeclareMathOperator{\Cl}{Cl}
\DeclareMathOperator{\modu}{mod}
  \DeclareMathOperator{\MCM}{MCM} 
\newcommand{\ci}{\tau}
  \newcommand{\hc}{\theta_{ch}} 
\def\m{\mathfrak{m}}
\def\p{\mathfrak{p}}
\newcommand{\ZZ}{\mathbb{Z}}
\newcommand{\QQ}{\mathbb{Q}}
 \newcommand*{\SHom}{\mathcal{H}\mathit{om}}
\newcommand{\ses}[3]{0 \to {#1} \to {#2} \to {#3} \to 0}
 \newcommand{\h}[3]{\theta^{#1}({#2},{#3})}
\begin{document} 

\title{Hochster's theta pairing and numerical equivalence}

\author{Hailong Dao}
\address{Department of Mathematics\\
University of Kansas\\
 Lawrence, KS 66045-7523 USA}
\email{hdao@math.ku.edu}

\author{Kazuhiko Kurano}
\address{Department of Mathematics\\
School of Science and Technology\\
Meiji University\\
Higashimata 1-1-1, Tama-ku, Kawasaki-shi 214-8571, Japan}
\email{kurano@isc.meiji.ac.jp}

\date{}
\begin{abstract}
Let $(A,\m)$ be a local hypersurface with isolated singularity. We show that Hochster's theta pairing vanishes on elements that are {numerically equivalent to zero} in the Grothendieck group of $A$ under the mild assumption that $\spec A$ admits a resolution of singularity.  We also prove that when $\dim A =3$, the Hochster's
theta pairing is positive semidefinite. These results combine to show that the counter-example of Dutta-Hochster-McLaughlin to general vanishing of Serre's intersection multiplicity exists for any three dimensional isolated hypersurface singularity that is not a UFD and has a desingularization. Our method involves showing that $\theta^A$ gives a bivariant class for the morphism $\spec A/\m \to \spec A$. 
We also show that, if $A$ is three dimensional isolated hypersurface singularity that has a desingularization, the divisor class group
is finitely generated torsion-free.

\end{abstract}

\thanks{The first author is partially supported by NSF grants DMS 0834050 and DMS 1104017. The second author is supported by KAKENHI (21540050). }

\maketitle

\section{Introduction}\label{intro}

Let $A$ be a local hypersurface with isolated singularity (so $A_\p$ is regular for each  non-maximal $\p \in \spec A$).
For any pair of  finitely  generated $A$-modules $M$ and $N$, $\ell(\Tor_i^A(M,N))<\infty$ for $i>\dim A $, here $\ell(-)$ denotes length. 
The function $\theta^A(M,N)$ was introduced by Hochster (\cite{Ho1}) to be:
$$ \theta^A(M,N) = \ell(\Tor_{2e+2}^A(M,N)) -\ell(\Tor_{2e+1}^A(M,N)) $$
where $e$ is any integer such that $2e \geq \dim A$. The function $ \theta^A(M,N)$ is additive on short exact sequences and thus define a pairing on the Grothendieck group of finitely generated modules $\G(A)$ or the reduced group $\underline {G_0}(A):= G_0(A)/\langle[A]\rangle$.  

The theta pairing has attracted quite a bit of attention lately due to its recently discovered connections to a number of diverse areas and problems (\cite{BV, CW, Da2, Da3, MPSW, PV}). For a brief history of these recent developments and how they connect to our work we refer to Section \ref{history}. 

A main result of the present  paper is  the following:

\begin{thm}\label{kazu1}
Assume that $\spec A$ admits a resolution of singularities. Then $\theta^A$ vanishes on elements in the Grothendieck group $G_0(A)$ that are \textit{numerically equivalent to zero}. 
 \end{thm}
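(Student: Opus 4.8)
The plan is to realize $M\mapsto\theta^A(M,N)$, for a fixed maximal Cohen--Macaulay $N$, as (the degree of) a bivariant operational class for the closed immersion $\iota\colon\spec(A/\m)\to\spec A$, and then to use a resolution of singularities to see that such a class automatically kills the Riemann--Roch image of a numerically trivial element. \emph{Reductions:} by additivity of $\theta^A$ it suffices to treat $\theta^A(\alpha,N)$ for $\alpha\in G_0(A)$ numerically equivalent to zero; since $\theta^A(\alpha,\syz N)=-\theta^A(\alpha,N)$ and $\syz^{j}N$ is maximal Cohen--Macaulay for $j\gg0$, we may assume $N$ is MCM, hence (writing $A=R/(f)$ with $R$ regular local) presented by a matrix factorization $(\phi,\psi)$ over $R$, with $N|_U$ a vector bundle on the punctured spectrum $U=\spec A\setminus\{\m\}$ and $\Tor^A_i(M,N)$ of finite length for $i\ge1$, $2$-periodically in high degree (Eisenbud).

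\emph{The bivariant formula.} The main step is to produce a bivariant class $c_N\in\bigoplus_i A^i\!\big(\spec(A/\m)\to\spec A\big)_{\QQ}$ together with a local Riemann--Roch identity
\[
\theta^A(M,N)=\deg\!\big(c_N\cap\tau_A([M])\big)\qquad\text{for every finitely generated }A\text{-module }M,
\]
where $\tau_A\colon G_0(A)_{\QQ}\xrightarrow{\ \sim\ }\CH_*(\spec A)_{\QQ}$ is the singular Riemann--Roch isomorphism and $\deg$ is the degree map onto $\CH_0(\spec(A/\m))_{\QQ}=\QQ$. The class $c_N$ is built from the matrix factorization $(\phi,\psi)$ via the localized Chern character formalism; the local freeness of $N|_U$ (which is exactly where ``$N$ MCM, $A$ isolated singularity'' is used) is what makes the construction well-defined, and the identity is then obtained by matching $\ell(\Tor^A_{2e+2}(M,N))-\ell(\Tor^A_{2e+1}(M,N))$ with the intersection number output by $c_N$ through the usual local Riemann--Roch theorem for $\spec A$. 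Extending linearly in the first argument yields $\theta^A(\alpha,N)=\deg\big(c_N\cap\tau_A(\alpha)\big)$ for all $\alpha\in G_0(A)$.

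\emph{Passage to the resolution.} Let $\pi\colon X\to\spec A$ be a resolution of singularities, an isomorphism over $U$, with $X$ regular and $E=\pi^{-1}(\m)$ proper over $A/\m$. Since $N|_U$ is a vector bundle, the pull-back $\pi^{*}c_N$ is a Chow cohomology class on $X$ with supports in $E$, and because $X$ is smooth it equals $\ch(P_\bullet)$ for an honest perfect complex $P_\bullet$ on $X$ exact off $E$. Using the projection formula for bivariant classes and Grothendieck--Riemann--Roch on $X$, for any lift $\gamma\in\CH_*(X)_{\QQ}$ of $\tau_A(\alpha)$ one gets
\[
\theta^A(\alpha,N)=\deg\!\big(c_N\cap\tau_A(\alpha)\big)=\int_X\ch(P_\bullet)\cdot\gamma=\chi\!\big(X,\;P_\bullet\otimes^{\mathbf L}_{\OO_X}\mathcal M\big),
\]
a finite Euler characteristic (here $\mathcal M\in G_0(X)_{\QQ}$ is any class with $\tau_X(\mathcal M)=\gamma$; finiteness holds because $P_\bullet\otimes^{\mathbf L}\mathcal M$ has cohomology supported on $E$, proper over $A/\m$). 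If now $\alpha$ is numerically equivalent to zero then $\tau_A(\alpha)$ is numerically trivial on $\spec A$, and using the blow-up / Mayer--Vietoris sequences relating $\CH_*(E)$, $\CH_*(X)$, $\CH_*(\spec A)$ and the compatibility of Chern characters with proper push-forward, one checks that this forces the displayed number to vanish: pushing the pairing back down to $\spec A$ rewrites it as $\deg\big(\ch_\m(R\pi_*P_\bullet)\cap\tau_A(\alpha)\big)$, which is $0$ by the definition of numerical equivalence. Hence $\theta^A(\alpha,N)=0$.

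\emph{Main obstacle.} The crux is the bivariant formula: extracting $c_N$ from the \emph{infinite} $2$-periodic resolution, checking that it is genuinely bivariant (compatibility with Gysin maps, proper push-forward, flat pull-back), and proving the local Riemann--Roch identity with the right normalization --- i.e.\ making the abstract's slogan ``$\theta^A$ gives a bivariant class for $\spec(A/\m)\to\spec A$'' precise. The resolution of singularities enters only at the last step, and precisely because numerical triviality on the singular scheme $\spec A$ is not a priori detected by \emph{arbitrary} bivariant classes for the closed point; lifting to the smooth $X$, where operational and ordinary Chow groups coincide and Chern characters exhaust $K$-theory with supports, is what lets one conclude that $c_N$ --- equivalently, an $E$-supported Chow cohomology class --- annihilates a numerically trivial class.
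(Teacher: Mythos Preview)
Your overall strategy---build a bivariant class from $N$ and use the resolution to reduce to a perfect-complex pairing---matches the paper's. But the execution in the paper is different in an essential way, and your version has a real gap at the end.

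The paper works entirely in $K$-theory, not Chow theory. The bivariant class is defined directly on Grothendieck groups by
\[
\theta(N)_g([\mathcal F])=[H_{2k}(g^*\mathbb F_\bullet\otimes\mathcal F)]-[H_{2k-1}(g^*\mathbb F_\bullet\otimes\mathcal F)]
\]
for $k\gg0$; no localized Chern character construction from the periodic resolution is needed, so your ``main obstacle'' simply does not arise. The key step is then a short computation showing $i_*\theta(N)_\pi([\mathcal O_Z])=0$ in $G_0(Z)$ (four exact sequences coming from the matrix factorization, giving $[K_\alpha]-[I_\beta]-[K_\beta]+[I_\alpha]=[\pi^*F_{2k}]-[\pi^*F_{2k-1}]=0$). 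Thomason--Trobaugh localization
\[
K_0^{\m}(A)\longrightarrow G_0(W)\stackrel{i_*}{\longrightarrow} G_0(Z)
\]
then produces a bounded finite free $A$-complex $\mathbb F_\bullet$, exact off $\{\m\}$, with $\theta(N)_\pi([\mathcal O_Z])=\chi(\mathbb F_\bullet)_\pi([\mathcal O_Z])$ in $G_0(W)$. A bivariance/dimension-induction lemma (Lemma~4.5) upgrades this single equality to $\theta(N)_g\otimes1=\chi(\mathbb F_\bullet)_g\otimes1$ for every $g$, in particular $g=\id$; hence $\theta^A(N,M)=\chi_{\mathbb F_\bullet}([M])$, which vanishes on numerically trivial $[M]$ by definition.

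Your substitute for this---find a perfect $P_\bullet$ on the smooth $X$ with $\ch(P_\bullet)=\pi^*c_N$ and then ``push the pairing back down'' to get $\deg(\ch_\m(R\pi_*P_\bullet)\cap\tau_A(\alpha))$---does not close. Even granting the existence of $P_\bullet$, the complex $R\pi_*P_\bullet$ lies in $D^b_{\mathrm{coh}}(A)$ with cohomology of finite length, but there is no reason it is \emph{perfect} over the singular ring $A$; without perfectness the localized Chern character $\ch_\m(R\pi_*P_\bullet)$ is undefined, and you cannot invoke the definition of numerical equivalence. The whole point of the Thomason--Trobaugh step is that it manufactures a perfect complex \emph{on $A$} (not on $X$) directly, bypassing any push-forward of perfect complexes through a non-regular target. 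That is the idea your sketch is missing.
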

 
The  concept of numerical equivalence over local rings was introduced by the second author in parallel with intersection theory on projective varieties.  An element $[M]$ of the Grothendieck group $G_0(A)$ is  numerically 
equivalent to zero if for any module $N$ of finite length and finite projective dimension, 
$$\chi^A(M,N):= \sum_{i\gg 0} (-1)^i\ell(\Tor_i^A(M,N)) = 0$$ (in general this should be defined using the category of perfect complexes with finite length homologies but it makes no difference in our case, see Section \ref{prep}).   

Since numerical equivalence follows from algebraic equivalence  even in our setting (Remark~\ref{algeq}), our main result implies that of \cite{CW} and \cite{MPSW}.  
Several  corollaries follow. It gives  a new proof on the vanishing of $\theta^A$ when  $\dim A$ is even (Conjecture \ref{modconj}, part (1)) in the graded case (proved in \cite{MPSW}). 

The proof of \ref{kazu1} contains some ingredients which we believe are of independent interests. In \ref{}, we give a criterion for a map from the Grothendieck group $G_0(A)$ to $\ZZ$ to arise from a perfect complex with finite length homologies. In addition, a key technical result shows that $\theta^A$ gives bivariant class on the Grothendieck groups (as well as the Chow groups). Recall that a bivariant class for a morphism of scheme $f: X\to Y$ gives 
a homomorphism for the Grothendieck groups $G_0(Y') \to G_0(X')$  for any fibre square. Such map commutes with pushforward, pullback and intersection product (for a precise definition see Section \ref{biva}).

The second main result is that in dimension three, $\theta^A$ is positive semidefinite.  

\begin{thm}\label{dim3}
Let $A$ be a local hypersurface with isolated singularity of dimension $3$. Then $\theta^A(M,M)\geq 0$ for any module $M$. Furthermore, if $M$ is reflexive of rank one, then equality holds if and only if $M$ is free. 
\end{thm}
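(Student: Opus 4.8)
The plan is to leverage the bivariant-class property of $\theta^A$ together with the intersection-theoretic machinery on $\spec A$. First I would reduce to the case where $A$ is complete with algebraically closed residue field, and fix a resolution of singularities $\pi \colon X \to \spec A$, which exists by a theorem of Lipman in dimension three (or is part of the hypothesis chain used for Theorem \ref{kazu1}). Since $A$ is a three-dimensional isolated hypersurface singularity, one knows from \cite{MPSW}-type arguments (or directly) that $\theta^A$ is only nonzero on the codimension-one part, so it descends to a pairing on $\Cl(A) \otimes \QQ$ after identifying the relevant piece of $\underline{G_0}(A)\subq$ with the divisor class group via the Riemann--Roch/Chern character isomorphism. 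By the last assertion in the abstract, $\Cl(A)$ is finitely generated torsion-free, so it suffices to show the induced quadratic form on $\Cl(A)\otimes\RR$ is positive semidefinite, with the kernel exactly the trivial class.

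The key step is to identify $\theta^A(M,N)$, for $M, N$ reflexive of rank one corresponding to divisor classes $D, E$, with an intersection number computed on the exceptional fibre of $\pi$. Using that $\theta^A$ is a bivariant class for $\spec A/\m \to \spec A$, I would push the computation up to $X$: the cycle-level formula should express $\theta^A(M,N)$ as $-(\widetilde{D}\cdot \widetilde{E})_{E}$, i.e. (up to sign) the intersection pairing of the strict transforms restricted to the (two-dimensional) exceptional divisor, or more precisely a sum of local contributions over the exceptional curves. This is where the theory of \cite{Ho1} combined with the bivariant formalism of Section \ref{biva} does the real work: $\theta^A(M,N)$ becomes $\pi_*$ of a product of first Chern classes supported on the exceptional set. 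The crucial sign input is that on a resolution of a normal surface singularity — which is what the exceptional fibre over the closed point looks like after slicing — the intersection form on exceptional divisors is negative definite (Mumford, Grauert). Thus $-\theta^A(M,M)$ is a negative-semidefinite quadratic form evaluated on an exceptional-type cycle, giving $\theta^A(M,M)\geq 0$.

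For the equality statement, I would use that the negative-definiteness above is strict: $\theta^A(M,M) = 0$ forces the corresponding exceptional cycle to vanish, which in turn forces the strict transform of the divisor associated to $M$ to meet the exceptional fibre trivially. Combined with the fact that $M$ is reflexive of rank one, this should force the divisor class $[M]$ to be numerically equivalent to zero, hence (by Theorem \ref{kazu1} applied in reverse, together with finite generation and torsion-freeness of $\Cl(A)$, which makes numerical triviality detect freeness for rank-one reflexives) conclude $M$ is free. One has to be slightly careful that $\theta^A(M,-)\equiv 0$ does imply $[M]$ numerically trivial and not merely in the radical of $\theta^A$; this uses the self-duality $\theta^A(M,N) = \theta^A(N,M^{*})$ type relations available for hypersurfaces, so the radical of $\theta^A$ on rank-one reflexives coincides with the numerically trivial classes.

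The main obstacle I anticipate is making the identification of $\theta^A$ with the exceptional intersection form precise and correctly signed — i.e. proving the cycle-level formula $\theta^A(M,N) = -(c_1(\widetilde M)\cdot c_1(\widetilde N))$ restricted to the exceptional fibre. The bivariant property gives the right framework (it guarantees $\theta^A$ commutes with the pullback $\pi^*$ and pushforward $\pi_*$), but extracting the negative sign honestly requires analyzing how $\theta^A$ pairs the skyscraper at $\m$ against $M$ and matching it with the self-intersection on the surface-type exceptional fibre. Verifying that the relevant exceptional intersection matrix is the negative-definite one — and that restricting a three-fold resolution to this fibre genuinely produces the resolution-of-surface-singularity picture where Mumford's theorem applies — is the technical heart, and everything downstream (semidefiniteness, the freeness criterion) follows formally once this is in place.
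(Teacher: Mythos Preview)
Your plan has several genuine gaps and does not match the paper's approach.

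\textbf{Hypothesis mismatch.} Theorem~\ref{dim3} carries no desingularization hypothesis, and the paper proves it without one. Your argument begins by fixing a resolution $\pi\colon X\to\spec A$; Lipman's theorem is a surface result, not a three-fold result, so in positive or mixed characteristic you have simply added an assumption that is not there.

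\textbf{Circularity.} You invoke the torsion-freeness of $\Cl(A)$ (``by the last assertion in the abstract'') to pass from numerical triviality to freeness. In the paper that statement is Corollary~\ref{last}, which is \emph{deduced from} Theorem~\ref{dim3} (via Corollary~\ref{dhm}). Using it here is circular.

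\textbf{The key geometric input is not available.} Mumford--Grauert negative definiteness is a statement about the exceptional curve configuration on a resolution of a normal \emph{surface} singularity. For a three-fold resolution the exceptional locus is two-dimensional, and there is no off-the-shelf ``negative definite intersection form on exceptional divisors'' to quote. Your ``after slicing'' remark is exactly the point that would need a real argument; without it, the sign of the form is unproved. Likewise, the precise formula $\theta^A(M,N)=-(c_1(\widetilde M)\cdot c_1(\widetilde N))|_{E}$ is asserted, not derived; the bivariant formalism of Section~\ref{biva} tells you $\theta^A$ is compatible with proper pushforward and flat pullback, but it does not hand you this identity.

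\textbf{What the paper actually does.} The proof is entirely local homological algebra, with no resolution of singularities and no intersection theory. One first reduces (Lemma~\ref{mainLem}) to rank-one reflexives $I$ via $c_1$, using Bourbaki sequences and Hochster's original vanishing for $\theta^A$ when $\dim+\dim\le d$. The heart is Theorem~\ref{mainTheorem}: since $\Hom_A(I,I)\cong A$ is MCM, a four-term Ext/Tor sequence (from \cite{Ha,Jo,Jor}) forces $\Tor_1^A(I_1,I)=0$ for a suitable auxiliary module $I_1$; a change-of-rings argument to the ambient regular ring $S$ (using $\pd_S I\le 2$) kills $\Tor_3^A(I_1,I)$; periodicity then gives $\theta^A(I_1,I)=\ell(\Tor_2^A(I_1,I))\ge 0$, and one reads off $\theta^A(I^*,I)\le 0$, hence $\theta^A(I,I)\ge 0$. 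Equality is handled by the depth formula and Tor-rigidity over hypersurfaces (\cite{HW2,Mil}), forcing $I$ free. None of the bivariant or intersection-theoretic machinery enters.
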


By combining Theorems \ref{kazu1} and  \ref{dim3} one obtains (see Corollary \ref{dhm}) a vast generalization of the famous original example by Dutta-Hochster-McLaughlin of non-vanishing Serre's intersection multiplicity, which was constructed for $A=k[[x,y,u,v]]/(xy-uv)$:

\begin{cor}\label{DHM}
Let $A$ be a local hypersurface of dimension three with isolated singularity. Assume that $\spec A$ admits a resolution of singularity. Then for any  torsion $A$-module $M$, the following are equivalent:
\begin{enumerate}
\item There exists a  module $N$  of finite length and finite projective dimension such that $\chi^A(M,N) \neq 0.$
\item The divisor class of $M$ is non-trivial (for example, if $M=A/I$ for a non-free reflexive ideal $I$).  
\end{enumerate}
\end{cor}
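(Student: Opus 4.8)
The plan is to deduce Corollary \ref{DHM} from the two main theorems together with the structure of $G_0(A)$ for a three-dimensional isolated hypersurface singularity with a desingularization. First I would recall that for such an $A$ the reduced Grothendieck group $\underline{G_0}(A) = G_0(A)/\langle [A]\rangle$ is controlled by the divisor class group $\Cl(A)$: by general theory the cycle map gives $G_0(A) \cong \ZZ \oplus \Cl(A) \oplus (\text{lower-dimensional part})$, and for a torsion $A$-module $M$ the class $[M]$ lands in the codimension-$\geq 1$ part, whose leading term is exactly the divisor class $c_1(M) \in \Cl(A)$. The hypothesis that $A$ has a desingularization lets me invoke the statement promised at the end of the abstract, namely that $\Cl(A)$ is finitely generated torsion-free; I will use this to identify numerical equivalence on the torsion part of $G_0(A)$ with triviality of the divisor class.

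The core of the argument is the chain of implications. For (1) $\Rightarrow$ (2): suppose the divisor class of $M$ is trivial, so $[M]$ has zero $c_1$; since the deeper components of $[M]$ (codimension $\geq 2$) are supported at the isolated singular point and hence numerically negligible — any $N$ of finite length and finite projective dimension pairs to zero with cycles supported in dimension $\leq 1$ by the dimension/vanishing properties of $\chi^A$ — the class $[M]$ is numerically equivalent to zero. By Theorem \ref{kazu1} we would then get $\theta^A(M,-) \equiv 0$, but more directly numerical equivalence to zero means precisely $\chi^A(M,N)=0$ for all such $N$, contradicting (1). Actually the cleanest route is: (1) says $[M]$ is \emph{not} numerically equivalent to zero; I must show this forces $c_1(M)\neq 0$. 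If $c_1(M)=0$ then $[M]$ lies in $F_2 G_0(A)$, the part generated by modules of dimension $\leq 1$; for such classes and for $N$ of finite length and finite projective dimension one has $\chi^A(M,N)=0$ (the intersection of a dimension-$\leq 1$ cycle with a dimension-$0$ cycle on a $3$-fold vanishes, using Serre's vanishing or the fact that $N$ is perfect and the supports meet only at $\m$ in too small a dimension) — hence $[M]$ would be numerically equivalent to zero, a contradiction. So (1) $\Rightarrow$ (2).

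For (2) $\Rightarrow$ (1): assume $c_1(M) = D \neq 0$ in $\Cl(A)$. Pick a reflexive rank-one module $L$ with $c_1(L) = D$ (for instance the reflexive hull of a suitable ideal, or $M^{**}$ itself if $M$ has rank zero one passes to an auxiliary reflexive module realizing $D$). Since $\Cl(A)$ is torsion-free and $D\neq 0$, $L$ is not free, so by the second assertion of Theorem \ref{dim3} we get $\theta^A(L,L) > 0$. Now $\theta^A(L,L)$ is, up to sign, an Euler characteristic $\chi^A(L, \text{something of finite length and finite projective dimension})$: concretely, on a hypersurface the high Tor modules are $2$-periodic and of finite length, and the non-vanishing of $\theta^A(L,L)$ produces — via the construction underlying Corollary \ref{dhm} referenced in the text, essentially truncating the periodic resolution to get a perfect complex with finite length homology — a module $N$ of finite length and finite projective dimension with $\chi^A(L,N)\neq 0$. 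Finally, one transfers from $L$ back to $M$: since $c_1(M)=c_1(L)=D$, the classes $[M]$ and $[L]$ differ by an element of $F_2 G_0(A)$, which pairs to zero with every such $N$ (same vanishing as above), so $\chi^A(M,N) = \chi^A(L,N) \neq 0$. The parenthetical example in (2), $M = A/I$ with $I$ a non-free reflexive ideal, is the special case $c_1(M) = -[I] \neq 0$.

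The main obstacle I anticipate is the precise bookkeeping in the last step: extracting an honest module $N$ (as opposed to a perfect complex) of finite length and finite projective dimension with $\chi^A(M,N)\neq 0$ from the inequality $\theta^A(M^{**},M^{**})>0$, and making sure the filtration argument that lets me replace $M$ by a rank-one reflexive module $L$ realizing the same divisor class is valid — this requires knowing that on the torsion part of $G_0(A)$ the only numerically relevant invariant is $c_1$, which in turn rests on $\Cl(A)$ being finitely generated torsion-free and on the vanishing of $\chi^A$ against low-dimensional cycles. Once the identification $[M] \equiv c_1(M) \bmod F_2$ and the vanishing $\chi^A(F_2 G_0(A), N) = 0$ are in hand, the equivalence of (1) and (2) follows formally from Theorems \ref{kazu1} and \ref{dim3}.
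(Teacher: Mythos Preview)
Your overall strategy matches the paper's: both directions rest on Theorem~\ref{kazu1} (in its precise form Theorem~\ref{maintheorem}), Theorem~\ref{dim3} (Theorem~\ref{UFD}), and the fact that a torsion class with $c_1 = 0$ is numerically trivial (the paper cites \cite[Proposition~3.7]{K23}; your argument via $[M]\in F_2$ and Serre vanishing on a complete intersection is correct and amounts to the same thing).

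There is, however, a genuine gap in your $(2)\Rightarrow(1)$ argument, at the transfer step. You claim that if $c_1(M)=c_1(L)=D$ then $[M]-[L]\in F_2 G_0(A)$. This is false: $M$ is torsion (rank $0$) while $L$ is reflexive of rank $1$, so $[M]-[L]$ has rank $-1$ and cannot lie in any step of the codimension filtration below $F_0$. What is true is $[M]-[L]+[A]\in F_2$, whence $\chi^A(M,N)=\chi^A(L,N)-\ell(N)$, not $\chi^A(M,N)=\chi^A(L,N)$. Knowing $\chi^A(L,N)\neq 0$ for one particular $N$ therefore does not force $\chi^A(M,N)\neq 0$.

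The repair is to transfer along $\theta^A$ rather than $\chi^A$. Since $\theta^A$ vanishes on $[A]$ and (by your own $F_2$-vanishing argument, which is Lemma~\ref{mainLem} in the paper) on classes of dimension $\leq 1$, it depends only on $c_1$; hence $\theta^A(M,M)=\theta^A(L,L)>0$. Now apply Theorem~\ref{maintheorem} with first argument $M$ itself: it produces $N_1,N_2$ of finite length and finite projective dimension with $\theta^A(M,M)=\chi^A(N_1,M)-\chi^A(N_2,M)\neq 0$, so one of the $N_i$ works. In fact the detour through $L$ is unnecessary once you have the full Theorem~\ref{UFD}, which gives $\theta^A(M,M)>0$ directly from $c_1([M])\neq 0$ for arbitrary $M$. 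Finally, your description of how $N$ is extracted (``truncating the periodic resolution to get a perfect complex with finite length homology'') is not what is happening: a naive truncation does not yield finite-length homology, and the paper's construction instead passes through the resolution of singularities and the Thomason--Trobaugh localization sequence.
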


As an added bonus, it also follows that in the situation above, the class group of $A$ is always finitely generated torsion-free
(Corollary~\ref{last}). 

The paper is organized as follows. In Section \ref{prep} we recall basic definitions and notations. Section \ref{history} briefly recall the recent history of Hochster's theta invariant as well as a group of motivating conjectures. Section \ref{biva} gives the definition of a bivariant class for Grothendieck groups and a criterion for a map from $G_0(A)$ to $\ZZ$ to arise from the Euler characteristic of perfect complexes with finite length homologies. Section \ref{thetabiva} contains the proof that $\theta^A$ gives a bivariant class on Grothendieck groups. The proof of  Theorem  \ref{kazu1} is contained in Section \ref{mainresult}. Finally, Section  \ref{psd} contains the proofs of Theorem \ref{dim3} and Corollary \ref{DHM} as well as several corollaries and partial results in small dimensions. 

\section{Notations and preliminaries}\label{prep}

Unless otherwise noticed, all schemes in this paper are of finite type over some  regular base scheme $S$ and all morphisms are of finite type.
When dealing with Chow groups of schemes we always consider the {\em relative dimension}
as in Chapter~20 in Fulton~\cite{F}. 

For a scheme $X$, $G_0(X)$ denotes the Grothendieck group of coherent sheaves on $X$. For each $i\geq 0$, $A_i(X)$ denotes the Chow group of $i$-dimensional cycles in $X$ modulo rational equivalence.  
Let $\Cl(X)$  denote the  class group of $X$. When $X=\spec A$ is affine we shall write $G_0(A)$, $A_i(A)$ and $\Cl(A)$.  Let $\underline{G_0}(A):=G_0(A)/\mathbb Z[A]$ be the reduced Grothendieck group and $\underline{G_0}(A)_{\mathbb Q}:=\underline{G_0}(A)\tensor_{\mathbb Z}\mathbb Q$ be the reduced Grothendieck group of $A$ with rational coefficients.

Let $A$ be a local noetherian ring. We recall the concept of numerical equivalence for elements in Chow or Grothendieck groups. 
If $A$ is a homomorphic image of a regular local ring, then we have an isomorphism of vector spaces:
$$ \ci: G_0(A)_{\mathbb Q} \to A_*(A)_{\mathbb Q}  $$
We denote by $\ci_i$ the image of $\ci$ in $A_i(A)$, these are defined
by localized Chern characters.
In the case of codimension one cycles one does not need to tensor with $\QQ$. Namely, there is a map (see \cite{Ch}): 
$$c_1: G_0(A) \to A_{d-1}(A)$$
The map $c_1$ satisfies
\begin{itemize}
\item
$c_1([A]) = 0$,
\item
$c_1([A/I]) = -c_1([I]) = [\spec A/I]$ for each reflexive ideal $I$,
\item
$c_1([M]) = 0$ if $\dim M \le d-2$.
\end{itemize}
When $A$ is normal we can, and will identify $A_{d-1}(A)$ with the class group $\Cl(A)$ of $A$. 
For an $A$-module $M$, we have
\[
\tau_{d-1}([M]) = c_1([M]) - \frac{\rank_AM}{2}K_A
\]
in $A_{d-1}(A)_{\Bbb Q}$, where $K_A$ is the canonical divisor of $A$,
that is, $K_A = c_1([\omega_A])$.

For a bounded finite $A$-free complex ${\Bbb F}.$ 
with homology finite length,
we define
\[
\chi_{{\Bbb F}.} : G_0(A) \longrightarrow {\Bbb Z}
\]
to be 
\[
\chi_{{\Bbb F}.}([M]) = \sum_{i}(-1)^i\ell(H_i({\Bbb F}.\otimes_AM)) .
\]
We say that a cycle $\alpha$ in $G_0(A)$ is {\em numerically equivalent to
$0$} if $\chi_{{\Bbb F}.}(\alpha) = 0$ for any bounded finite $A$-free complex ${\Bbb F}.$ with homology finite length.
In the same way, we say that a cycle $\beta$ in $A_*(A)$
is {\em numerically equivalent to
$0$} if ${\rm ch}({\Bbb F}.)(\beta) = 0$ for any above ${\Bbb F}.$,
where ${\rm ch}({\Bbb F}.)$ is the localized Chern character
(see 17 in \cite{F}).
We denote by $\overline{G_0(A)}$ and $\overline{A_*(A)}$
the groups modulo numerical equovalence.
By Theorem~3.1 and Remark~3.5 in \cite{K23},
both of $\overline{G_0(A)}$ and $\overline{A_*(A)}$
are finitely generated torsion-free abelian group
under a mild condition.
It is proved in Proposition~2.4 in \cite{K23} that
numerical equivalence is consistent with dimension of cycles
in $A_*(A)$, so we have
\[
\overline{A_*(A)} = \oplus_{i = 0}^d\overline{A_i(A)} .
\]
The Riemann-Roch map $\tau$ preserves numerical equivalence
as in \cite{K23},
that is, it induces the map $\overline{\tau}$
that makes the following diagram commutative:
\[
\begin{array}{ccc}
G_0(A)_{\Bbb Q} & \stackrel{\tau}{\longrightarrow} & A_*(A)_{\Bbb Q} \\
\downarrow & & \downarrow \\
\overline{G_0(A)_{\Bbb Q}} & \stackrel{\overline{\tau}}{\longrightarrow} & \overline{A_*(A)_{\Bbb Q}}
\end{array}
\]
If $A$ is Cohen-Macaulay,
the Grothendieck group of bounded $A$-free complexes
with support in $\{ m \}$ is generated by finite free resolutions of modules of finite length and finite projective dimension (see Proposition~2 in \cite{RS}).
Therefore, in this case,
$\alpha$ in $G_0(A)$ is numerically equivalent to $0$
if and only if $\chi_{{\Bbb F}.}(\alpha) = 0$ for any 
free resolution of a module with finite length and
finite projective dimension.

\section{A brief history of theta invariant and some motivating open questions}\label{history}

In this section we briefly recall the (recent) history of the theta functions. Let $A$ be a local hypersurface with isolated singularity (so $A_\p$ is regular for each  non-maximal $\p \in \spec A$).
For any pair of  finitely  generated $A$-modules $M$ and $N$, $\ell(\Tor_i^A(M,N))<\infty$ for $i>\dim A $, here $\ell(-)$ denotes length. 
The function $\theta^A(M,N)$ was introduced by Hochster (\cite{Ho1}) to be:
$$ \theta^A(M,N) = \ell(\Tor_{2e+2}^A(M,N)) -\ell(\Tor_{2e+1}^A(M,N)) $$
where $e$ is any integer such that $2e \geq \dim A$. It is well known (see \cite{Ei}) that the sequence of modules $\{\Tor_i^A(M,N)\}$ is periodic of
period 2 for $i>\depth A- \depth M$, so this function is well-defined. Note that if $M$ or $N$ is maximal Cohen-Macaulay, one simply gets:
$$ \theta^A(M,N) = \ell(\Tor_{2}^A(M,N)) -\ell(\Tor_{1}^A(M,N)) $$

A key point here is the function $ \theta^A(M,N)$ is additive on short exact sequences and thus define a pairing on the Grothendieck group of finitely generated modules $G_0(A)$ or the reduced group $\underline {G_0}(A)$.  
This function was originally introduced as a possible mean to attack the Direct Summand Conjecture (where it was necessary to define $\theta^{A}$  for some non-isolated singularity $A$), however it has taken its own life recently. In particular, the behavior of $\theta^A$ over isolated singularities has been linked to some interesting topics in algebraic geometry and K-theory.

To be more specific, the main motivation of this note is  the following group of open questions:

\begin{conj}\label{modconj}

Let $A$ be a local hypersurface with isolated singularity and $d = \dim A$. Then for any finitely generated modules $M, N$: 
\begin{enumerate}
\item\label{con1} If $d$ is even, then $\h{A}{M}{N}=0$.
\item\label{con2} If $\dim M + \dim N \leq d$, then $\h{A}{M}{N}=0$.
\item\label{con3} If $\dim M  \leq d/2$, then $\h{A}{M}{N}=0$.
\item\label{con4} If  $M,N$ are maximal Cohen-Macaulay, then $\h{A}{M}{N} + (-1)^{\frac{d+1}{2}}\h{A}{M^*}{N}=0.$ Note that when $d$ is even this implies $(1)$.
\item\label{con5} (\cite[Conjecture 3.6]{MPSW})If $d$ is odd, then $(-1)^{\frac{d+1}{2}}\h{A}{M}{M}\geq 0$. In other words, $(-1)^{\frac{d+1}{2}}\h{A}{-}{-}$  defines a positive semi-definite form  on  $\underline {G_0}(A)_{\mathbb Q}$. 
\end{enumerate}
\end{conj}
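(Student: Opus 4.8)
This statement is a conjecture, so in place of a complete proof the plan is to isolate the instances one can extract from Theorems~\ref{kazu1} and~\ref{dim3} together with the Riemann--Roch machinery of \cite{K23}. The immediate one is part~(5) for $d=3$: there $(-1)^{(d+1)/2}=1$, so the inequality is exactly $\h{A}{M}{M}\ge 0$ of Theorem~\ref{dim3}, whose second assertion (that $\theta^A$ is definite on the classes of reflexive rank-one modules) also supplies the equality case.

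For parts~(1), (2) and (3) I would run everything through Theorem~\ref{kazu1}. It shows $\h{A}{-}{-}$ kills numerically trivial classes, hence (assuming, as there, that $\spec A$ has a resolution) descends to a symmetric bilinear form on the reduced group $\overline{\underline{G_0}(A)}_\QQ$, which by \cite{K23} is finite-dimensional and, via $\overline{\tau}$, carries the dimension grading $\overline{\underline{G_0}(A)}_\QQ\cong\bigoplus_{i=0}^{d-1}\overline{A_i(A)}_\QQ$, with $\tau([M])$ having nonzero components only in degrees $\le\dim M$. The key structural point is that this reduced group is concentrated in the single degree $(d+1)/2$: in the graded case $A=R/(f)$ this follows from Grothendieck--Lefschetz and hard Lefschetz on $X=\Proj A$ (outside the middle dimension every $\overline{A_i(X)}_\QQ$ is spanned by a power of the hyperplane class, and the cone construction then kills the corresponding $\overline{A_i(A)}_\QQ$), while in the local case it should come from a weight-filtration analysis of the exceptional fibre of a resolution of $\spec A$. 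Granting this: when $d$ is even $(d+1)/2$ is not an integer, so $\overline{\underline{G_0}(A)}_\QQ=0$ and $\h{A}{-}{-}\equiv 0$, which is part~(1); when $d$ is odd and $\dim M\le d/2=(d-1)/2$, the class of $M$ sits in degrees $<(d+1)/2$ and is therefore numerically trivial, giving $\h{A}{M}{N}=0$ for all $N$ (part~(3)), and the same count shows $\dim M+\dim N\le d$ forces one of the two into degree $<(d+1)/2$, giving part~(2).

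The approach above leaves untouched exactly the hard core of the conjecture. First, part~(4): the duality $\h{A}{M}{N}=(-1)^{(d+1)/2}\h{A}{M^{*}}{N}$ for maximal Cohen--Macaulay modules is the fractional Calabi--Yau (Serre duality) symmetry of the singularity category $\underline{\MCM}(A)$, and the bivariant realization of $\theta^A$ on Chow groups constructed in Section~\ref{thetabiva} does not transport the Serre functor of $\underline{\MCM}(A)$, so a proof of part~(4) would require identifying this symmetry directly at the level of the Chow-group pairing. Second, part~(5) for odd $d\ge 5$: Theorem~\ref{dim3} uses the geometry special to dimension three (ultimately a Hodge-index-type positivity for the quadratic form on $\Cl(A)_\QQ$), and propagating the semidefiniteness to the pairing on the middle piece $\overline{A_{(d+1)/2}(A)}_\QQ$ in higher odd dimension---together with making the ``concentration in the middle degree'' rigorous in the non-graded setting---remains the principal obstacle.
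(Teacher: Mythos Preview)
Since the statement is a conjecture, the paper gives no full proof either; what it does is exactly the strategy you outline. The reduction of parts (1)--(3) to the ``concentration in middle degree'' statement is precisely the paper's Conjecture~\ref{conj}, its proof in the graded case is Proposition~\ref{affirm}, and the deduction via Theorem~\ref{kazu1} is made explicit in the Remark opening Section~\ref{psd}; likewise part~(5) for $d=3$ is Theorem~\ref{UFD}, and the paper agrees that part~(4) and part~(5) for $d\ge 5$ remain open.
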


Most of the statements above have appeared or hinted at in the literature in one form or another.  
Conjecture \ref{modconj} (\ref{con1}) was made and established in several cases  by the first author in \cite{Da2, Da1}. Since then, it has captured attention of many researchers  and has now been established in characteristic $0$ via three different approaches: intersection theory on smooth hypersurfaces (for the graded case), topological K-theory and Hochschild cohomology (\cite{MPSW}, \cite{BV}, \cite{PV}). These results suggest   much deeper facts about $\theta^A$, namely that it should be thought of as a Riemann-Roch form on the category of maximal Cohen-Macauley modules  (or matrix factorizations) over $A$. Thus they motivate the rest of Conjecture  \ref{modconj}, which can be viewed as analogues of Lefschetz hyperplane theorem and properties of Hodge-Riemann bilinear relations.  We note that (2) is proved for the excellent and equicharateristic case in \cite{Da1}. The statement (4) can be viewed as a strengthening of a result by Buchweitz which implies that $\h{A}{M}{N} + (-1)^{d+1}\h{A}{M^*}{N^*}=0$ in any dimension, see \cite[Proposition 4.3]{Da2}. 

In view of the above and the main results of this work, it is reasonable to make the following, which would explain some parts of Conjecture \ref{modconj}.
Precisely speaking, by Theorem~\ref{kazu1},
we know that
Conjecture~\ref{modconj} (1) follows from 
Conjecture~\ref{conj} (1) (a).
Also Conjecture~\ref{modconj} (3) follows from 
Conjecture~\ref{conj} (1) (b).

\begin{conj}\label{conj}
Let $(A, m)$ be a $d$-dimensional local domain with isolated singularity.
\begin{enumerate}
\item
Assume that $A$ is a complete intersection.
\begin{enumerate}
\item
If $d$ is even, then $\overline{G_0(A)}\subq = {\Bbb Q}[A]$ and, 
equivalently, $\overline{A_i(A)}\subq = 0$ for $i < d$.
\item
If $d$ is odd, then $\overline{A_i(A)}\subq = 0$ for $i \neq  \frac{d+1}{2}, d$.
\end{enumerate}
\item
If $i \le d/2$, then $\overline{A_i(A)}\subq = 0$.
\end{enumerate}
\end{conj}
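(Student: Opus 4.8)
The plan is to establish a Lefschetz-type concentration theorem for the numerical Chow groups of an isolated complete intersection singularity. The guiding principle is that $\overline{A_*(A)}\subq$ ought to depend only on the link $L$ of the singularity---equivalently, on the punctured spectrum $U=\spec A\setminus\{\m\}$, which is regular because the singularity is isolated---together with the fact that such a link is highly connected. Accordingly the argument would proceed in three stages: (i) use a resolution of singularities to express $\overline{A_i(A)}\subq$ through the cohomology of $L$; (ii) invoke the connectivity of $L$ to force concentration near the middle dimension; (iii) in even dimension, eliminate the one remaining middle class using the known vanishing of $\theta^A$.

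\emph{Stages (i) and (ii).} Fix a resolution $\pi\colon X\to\spec A$ with exceptional fibre $E=\pi^{-1}(\m)$, so $U\cong X\setminus E$; analytically, the germ of $\spec A$ is contractible and $U$ deformation retracts onto $L$, a compact oriented real $(2d-1)$-manifold. Combining the finite generation, torsion-freeness, and Riemann--Roch compatibility results of \cite{K23} with the compatibility of the Riemann--Roch map $\tau$ with topological cycle class maps, one would identify, after $\otimes\,\QQ$,
\[
\overline{A_i(A)}\subq\ \cong\ \im\bigl(A_i(A)\subq\longrightarrow H^{\mathrm{BM}}_{2i}(\spec A;\QQ)\bigr),\qquad H^{\mathrm{BM}}_{2i}(\spec A;\QQ)\cong H_{2i-1}(L;\QQ)\cong H^{2(d-i)}(L;\QQ),
\]
the last isomorphism being Poincar\'e duality on $L$; in other words, numerical equivalence on $A_i(A)\subq$ should be exactly the kernel of this topological cycle map. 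By the theorems of Milnor (hypersurface case) and Hamm (general complete intersections), $L$ is $(d-2)$-connected, so by Poincar\'e duality its reduced rational cohomology is concentrated in degrees $d-1$ and $d$. Combined with the above, $\overline{A_i(A)}\subq$ can be nonzero only when $2(d-i)\in\{0,\,d-1,\,d\}$, i.e.\ only for $i=d$ (which carries $[A]$) and for whichever of $d/2$ and $(d+1)/2$ is an integer. This already gives (1)(b), and it gives (2) when $d$ is odd, since then $i\le d/2<(d+1)/2$.

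\emph{Stage (iii).} It remains, for $d$ even, to show $\overline{A_{d/2}(A)}\subq=0$---equivalently, that the middle cohomology $H^{d}(L;\QQ)$, which can be positive-dimensional, contains no class in the image of the cycle map; note that this is exactly the overlap of (1)(a) and (2). Here one would invoke the vanishing $\theta^A\equiv 0$ in even dimension, Conjecture~\ref{modconj}(1), proved in characteristic zero in \cite{MPSW,BV,PV}, together with the (expected, and in characteristic zero established in the hypersurface case by those works) nondegeneracy of $\theta^A$ on the nontrivial part of $\overline{G_0(A)}\subq$ in degree $d/2$---an instance of the Hodge--Riemann-type behavior behind Conjecture~\ref{modconj}(5): the two facts together force $\overline{A_{d/2}(A)}\subq=0$. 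For complete intersections of codimension $>1$ one needs the corresponding statements for the Herbrand-difference (Euler-characteristic) pairing that plays the role of $\theta^A$.

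\emph{The main obstacle} is stage (i): matching \emph{algebraic} numerical equivalence---defined purely via finite-length perfect complexes, i.e.\ via intersecting at the closed point---with the kernel of a \emph{topological} cycle class map on a resolution. A priori only ``homologically trivial $\Rightarrow$ numerically trivial'' is available; the converse is of Grothendieck-standard-conjecture type, and in mixed characteristic there is neither a resolution nor a link---which is precisely why the hypothesis is imposed and why the statement is only a conjecture. A resolution-free alternative is to lift to an ambient regular local ring $R$ with $A=R/(f_1,\dots,f_c)$, where $A_i(\spec R)\subq=0$ for $i<\dim R$ and the vanishing theorem of Roberts and Gillet--Soul\'e applies, and to compute the rank of the span of the functionals $\alpha\mapsto\ch_i(\mathbb{F}_\bullet)(\alpha)$ on $A_i(A)\subq$ via Adams operations and the Koszul Chern character of $f_1,\dots,f_c$; but matching that rank to the prediction above---and, in particular, certifying that in even dimension the middle class is genuinely non-algebraic---is exactly where the difficulty, and the current restriction to characteristic zero, is concentrated.
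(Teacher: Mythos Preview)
The statement you address is labeled a \emph{conjecture} in the paper; the paper does not prove it in general. What the paper does establish is the special case (Proposition~\ref{affirm}) where $A$ is the localization at the irrelevant ideal of a graded ring $R$ with $X=\proj R$ smooth over a field: part~(1) is proved unconditionally, and part~(2) is proved assuming Grothendieck's standard conjecture. So your proposal should be read as a heuristic strategy toward the conjecture, and compared with the paper's argument in the graded case.

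On that comparison: the paper works on the projective side rather than through the link. It uses the known structure of $H^{2j}(X;\QQ)$ for a smooth projective complete intersection (one-dimensional in each even degree except possibly the middle), the injection $\mathrm{CH}^j_{\mathrm{hom}}(X)\subq\hookrightarrow H^{2j}(X;\QQ)$, and the surjection $\mathrm{CH}^j_{\mathrm{num}}(X)\subq/h\cdot\mathrm{CH}^{j-1}_{\mathrm{num}}(X)\subq\twoheadrightarrow\overline{A_{d-j}(A)}\subq$ from \cite{K23}. Your route via the link and Milnor--Hamm connectivity is the affine counterpart of this (for a cone, $L$ is a circle bundle over $X$), but the paper's argument has a decisive advantage: it only needs the \emph{easy} direction ``homological $\Rightarrow$ numerical'' together with the surjectivity above. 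Your stage~(i) asks instead that numerical equivalence on $A_i(A)\subq$ coincide with the kernel of a topological cycle map on a resolution; that is the hard direction and is genuinely of standard-conjecture strength---indeed the paper itself invokes the standard conjecture even in the graded case to obtain part~(2).

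Your stage~(iii) also has a real gap. You propose to kill $\overline{A_{d/2}(A)}\subq$ in even dimension by combining $\theta^A\equiv 0$ with a \emph{nondegeneracy} of $\theta^A$ on the numerical quotient. But nondegeneracy on $\overline{G_0(A)}\subq$ is not available: Theorem~\ref{kazu1} only gives the forward implication (numerically trivial $\Rightarrow$ $\theta^A$-null), and Conjecture~\ref{modconj}(5) is about semi-definiteness, not nondegeneracy, and remains open. The paper's graded proof avoids this entirely: when $d$ is even, $n=\dim X$ is odd, so the possibly large middle cohomology lies in \emph{odd} degree $n$ and is disjoint from the image of every $\mathrm{CH}^j$; hence $\mathrm{CH}^j_{\mathrm{num}}(X)\subq=\QQ$ for all $j$, the quotient by $h$ vanishes, and no appeal to $\theta^A$ is needed.
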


This conjecture is true in some special cases as follows.

\begin{prop}\label{affirm}
Assume that $R$ is a homogeneous ring over a field $k$ such that $\proj R$ is smooth over $k$.
Put $A = R_{R_+}$, where $R_+$ is the unique homogeneous maximal ideal.

Then, Conjecture~\ref{conj} (1) is true.
Furthermore, if the Grothendieck's  standard conjecture is true, then 
Conjecture~\ref{conj} (2) is true.
\end{prop}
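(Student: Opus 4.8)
The plan is to transport the problem to projective geometry, where numerical equivalence is classically understood, and then pull back along the map $\Proj R \hookleftarrow \Spec A \setminus \{A_+\}$ together with the specialization/localization exact sequence for Chow groups. Concretely, let $X = \Proj R$, which is smooth projective over $k$ by hypothesis, and let $Y = \Spec R$ be the affine cone with vertex $v$ corresponding to $R_+$; then $A = \OO_{Y,v}$ and $\Spec A \setminus \{v\}$ is the total space of the punctured line bundle $L^\times$ over $X$ associated to $\OO_X(1)$. First I would use the standard localization sequences relating $A_*(A)$, $A_*(Y)$, and $A_*(X)$: there is a right-exact sequence $A_*(\{v\}) \to A_*(Y) \to A_*(\Spec A) \to 0$ coming from excision and the fact that forming $\Spec A$ is a limit of open neighborhoods of $v$, and the $\mathbb{G}_m$-bundle structure gives $A_*(Y \setminus v) \cong A_*(X)/(h \cdot A_*(X))$ where $h = c_1(\OO_X(1))$. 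Combining these, $A_i(\Spec A)_{\QQ}$ is identified with a quotient of $\bigoplus_{j} A_{j}(X)_{\QQ}$ by the image of multiplication by $h$, in the appropriate degrees.

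Next I would analyze numerical equivalence on both sides. The key comparison is that the localized Chern character pairing defining numerical equivalence on $\Spec A$ corresponds, under the above identification, to the classical intersection pairing on $X$ against classes supported away from... more precisely, a cycle $\beta \in A_i(\Spec A)_{\QQ}$ is numerically zero iff its lifts to $A_*(X)_{\QQ}$ pair to zero with all cycles of complementary dimension modulo the $h$-divisible part; this is exactly the statement that $\overline{A_*(\Spec A)}_{\QQ}$ is a subquotient of the numerical Chow groups $\overline{A_*(X)}_{\QQ} = \bigoplus_j N^j(X)_{\QQ}$ of the smooth projective variety $X$. For part (1), when $A$ is a complete intersection, $X$ is a smooth complete intersection in weighted projective space, so by the Weak Lefschetz theorem (Lefschetz hyperplane theorem) its cohomology below the middle dimension is generated by powers of the hyperplane class $h$; hence $N^j(X)_{\QQ} = \QQ h^j$ for $j < \dim X$, and these classes die in the quotient by $h$-multiplication. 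Tracking the dimension shift ($\dim X = d - 1$) and separating the parity of $d$ then yields exactly the vanishing statements in Conjecture~\ref{conj}~(1)(a) and (1)(b), with the surviving class in the odd case being the middle-dimensional primitive part, which sits in relative dimension $\frac{d+1}{2}$. For part (2), the statement $\overline{A_i(\Spec A)}_{\QQ} = 0$ for $i \le d/2$ translates, after the dimension shift, into the vanishing of the numerical classes $N^j(X)_{\QQ}$ for $j$ below roughly half of $\dim X$ modulo $h$; Grothendieck's standard conjecture (specifically the Lefschetz-type standard conjecture, giving the hard Lefschetz isomorphism and hence that $N^*(X)$ behaves like the cohomology of a variety satisfying hard Lefschetz) forces the primitive decomposition and shows these low-degree pieces are all multiples of $h$, hence vanish in the quotient.

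The main obstacle I anticipate is making the first paragraph fully rigorous: identifying $\overline{A_*(\Spec A)}_{\QQ}$ with a concrete subquotient of $\overline{A_*(X)}_{\QQ}$ in a way that is compatible with the numerical equivalence relations on both sides, not merely with rational equivalence. One has to check that the localized Chern character functionals used to define numerical equivalence over the local ring $A$ are detected by (and only by) intersection against algebraic cycles on $X$ — equivalently, that the perfect complexes with finite-length homology over $A$ supported at $v$ generate, via their Chern characters, the same functionals on $A_*(\Spec A)_{\QQ}$ that honest cycles on $X$ do. This should follow from the structure of $K_0$ with supports and the projective bundle formula, together with the result quoted from \cite{K23} that $\overline{A_*(A)}$ is finitely generated torsion-free, but the bookkeeping between the cone, its punctured version, and the local ring — and the careful tracking of which graded/relative degree each Chow piece lands in — is where the real work lies. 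Once that dictionary is in place, parts (1) and (2) are essentially immediate consequences of Weak Lefschetz and the standard conjecture respectively.
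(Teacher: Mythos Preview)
Your proposal is correct and follows essentially the same strategy as the paper: pass to $X = \Proj R$, use the known cohomology of smooth projective complete intersections to force $\CH^j_{num}(X)_{\QQ} = \QQ\cdot h^j$ for all $j \neq \dim X/2$, and then descend to $\overline{A_*(A)}_{\QQ}$ via the quotient by the hyperplane class. The one difference is that the paper does not redo the ``main obstacle'' you flag---the compatibility of numerical equivalence with the cone/base comparison---but simply invokes the surjection $\CH^j_{num}(X)_{\QQ}/h\,\CH^{j-1}_{num}(X)_{\QQ} \twoheadrightarrow \overline{A_{d-j}(A)}_{\QQ}$ established in \cite[(7.5)]{K23}, and likewise dispatches part~(2) by citing \cite[Remark~7.12]{K23}.
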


\proof
Put $X = \proj R$.
Suppose $n = \dim X$ and $d = \dim A$.
Note that $d = n+1$. 

We may assume that $k$ is algebraically closed.

Suppose that the  characteristic of $k$ is $0$.
We may assume that $k$ is the complex number field ${\Bbb C}$ by the
Lefschetz principle.

Assume that $X$ is a complete intersection smooth projective variety over ${\Bbb C}$.
Then, it is well known that 
\[
H^j(X({\Bbb C}), {\Bbb Q})
=\left\{
\begin{array}{cl}
{\Bbb Q} & (\mbox{$0 \le j \le 2n$, $j \neq n$ and $j$ is even}) \\
? & (j = n) \\
0 & (\mbox{otherwise}) .
\end{array}
\right.
\]
Then, we have 
\[
\begin{array}{ccccc}
{\rm CH}^j(X)_{\Bbb Q} & \longrightarrow & 
{\rm CH}_{hom}^j(X)_{\Bbb Q} & \longrightarrow & 
{\rm CH}_{num}^j(X)_{\Bbb Q} \\
& \searrow & \downarrow & & \\
& & H^{2j}(X({\Bbb C}), {\Bbb Q}) & & 
\end{array}
\]
where ${\rm CH}_{hom}^j(X)_{\Bbb Q}$ 
(resp.\ ${\rm CH}_{num}^j(X)_{\Bbb Q}$) is the Chow group
divided by homological equivalence (resp.\ numerical equivalence).
Here, the map ${\rm CH}_{hom}^j(X)_{\Bbb Q} \rightarrow 
H^{2j}(X({\Bbb C}), {\Bbb Q})$ is injective, and 
${\rm CH}_{hom}^j(X)_{\Bbb Q} \rightarrow 
{\rm CH}_{num}^j(X)_{\Bbb Q}$ is surjective.
Remember that ${\rm CH}_{num}^j(X)_{\Bbb Q} \neq 0$ for $j = 0, 1, \ldots, n$.
Therefore, if $n$ is odd,
${\rm CH}_{num}^j(X)_{\Bbb Q} = {\Bbb Q}$ for $j = 0, 1, \ldots, n$.
If $n$ is even,
${\rm CH}_{num}^j(X)_{\Bbb Q} = {\Bbb Q}$ for $j = 0, 1, \ldots, n$
except for $j = n/2$.
On the other hand, we have the natural surjections
\[
{\rm CH}_{num}^j(X)_{\Bbb Q}/h{\rm CH}_{num}^{j-1}(X)_{\Bbb Q} 
\longrightarrow
\overline{A_{d-j}(A)}_{\Bbb Q}
\]
for $j = 0, 1, \ldots, n$ by (7.5) in \cite{K23},
where $h$ is the very ample divisor corresponding to the embedding $\proj R$.
Remark that $\overline{A_0(A)}_{\Bbb Q} = 0$ if $\dim A > 0$.
\begin{itemize}
\item
If $n$ is odd,
then $\overline{A_i(A)}\subq = 0$ for $i < d$.
Conjecture~\ref{conj} (1) (a) is proved.
\item
If $n$ is even,
then $\overline{A_i(A)}\subq = 0$ for $i \neq (d+1)/2, d$.
Conjecture~\ref{conj} (1) (b) is proved.
\end{itemize}
Conjecture~\ref{conj} (2) is true by Remark~7.12  in \cite{K23}.

In the case where the characteristic of $k$  is positive,
we use the \'etale cohomology instead of the Betti cohomology.
The proof is the same as the case of characteristic zero, so
we omit it.
\qed

The localized Chern characters  induce a map $\hc^A: A_*(A)_{\mathbb Q} \times A_*(A)_{\mathbb Q}  \to {\mathbb Q} $. Thus, we can also state the following conjecture for Chow groups.  

\begin{conj}\label{}
For $\alpha \in A_*(A)$ a homogenous element, we have $\hc^A(\alpha, \beta) =  0$ if $\alpha \notin A_{\frac{d+1}{2}} (A)$(in particular, it is always $0$ when $d$ is even). Also,  $(-1)^{\frac{d+1}{2}}\hc^A(-,-)$ defines a positive semi-definite form on $A_{\frac{d+1}{2}}(A)$.
\end{conj}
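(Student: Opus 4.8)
The plan is to reduce the conjecture to the $G_0$-level results of this paper together with Conjecture~\ref{conj}, and, for the positivity half, to the Hodge--Riemann bilinear relations in the geometric setting of Proposition~\ref{affirm}. The first step is to show that $\hc^A$ descends to a pairing on $\overline{A_*(A)}\subq=\bigoplus_{i=0}^{d}\overline{A_i(A)}\subq$. Since $\hc^A$ is the Chow-group incarnation of $\theta^A$ under the Riemann--Roch isomorphism $\tau$ --- concretely $\chi_{\mathbf{F}_{\bullet}}(\alpha)=\mathrm{ch}(\mathbf{F}_{\bullet})(\tau(\alpha))$ for every perfect complex $\mathbf{F}_{\bullet}$ with finite length homology --- a cycle $x\in A_*(A)\subq$ numerically equivalent to $0$ is $\tau(\alpha)$ for some $\alpha\in G_0(A)\subq$ numerically equivalent to $0$; Theorem~\ref{kazu1} then gives $\theta^A(\alpha,-)=0$, and surjectivity of $\tau$ forces $\hc^A(x,-)=0$. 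As $\hc^A$ is symmetric (like Hochster's $\theta$), it now suffices to understand $\hc^A$ on the pieces $\overline{A_i(A)}\subq$.

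Granting this, the vanishing assertion follows from Conjecture~\ref{conj}(2) together with the grading property $\hc^A(A_i(A),A_j(A))=0$ whenever $i+j\neq d+1$ --- a statement that should come from the bivariant-class structure of $\theta^A$ developed in Sections~\ref{biva} and~\ref{thetabiva}, and which in any case reflects the known cases of Conjecture~\ref{modconj}(2) and (4). Indeed, for a homogeneous $\alpha\in A_i(A)$ with $i\neq\tfrac{d+1}{2}$ the grading property says $\alpha$ pairs nontrivially only against $A_{d+1-i}(A)$; whichever of $i$ and $d+1-i$ is the smaller is an integer $\le d/2$, so the corresponding group $\overline{A_\bullet(A)}\subq$ vanishes by Conjecture~\ref{conj}(2), and since $\hc^A$ factors through $\overline{A_*(A)}\subq$ this yields $\hc^A(\alpha,-)=0$. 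When $d$ is even $\tfrac{d+1}{2}\notin\ZZ$, so the whole pairing vanishes. In particular, by Proposition~\ref{affirm} (and Remark~7.12 of \cite{K23}), the vanishing half is unconditional in the graded case $A=R_{R_+}$ with $\proj R$ smooth, using \'etale cohomology and the standard conjectures in positive characteristic.

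For positive semidefiniteness ($d$ odd, so $n=\dim\proj R$ is even) I would reduce to $X=\proj R$ and use the surjection of (7.5) in \cite{K23}, which for $j=n/2$ reads $\mathrm{CH}^{n/2}_{num}(X)\subq\big/h\!\cdot\!\mathrm{CH}^{n/2-1}_{num}(X)\subq\twoheadrightarrow\overline{A_{(d+1)/2}(A)}\subq$, the source being the primitive middle numerical Chow group, on which the cup-product form is nondegenerate. The core step is a Grothendieck--Riemann--Roch computation comparing localized Chern characters on $\Spec A$ (with $A=R_{R_+}$) to intersection numbers on $X$, which I expect to identify $\hc^A$, under the above surjection, with a strictly negative rational multiple of $(\alpha,\beta)\mapsto\int_X\alpha\cup\beta$. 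By the Hodge--Riemann bilinear relations the latter is definite of sign $(-1)^{n/2}=-(-1)^{(d+1)/2}$ on the primitive $(n/2,n/2)$-part; hence $(-1)^{(d+1)/2}\hc^A$ is carried to a positive definite form, the surjection is in fact an isomorphism, and $(-1)^{(d+1)/2}\hc^A$ is positive definite on $\overline{A_{(d+1)/2}(A)}\subq$ --- in particular positive semidefinite on $A_{(d+1)/2}(A)$, since classes numerically equivalent to $0$ lie in its radical by the first step. In positive characteristic one substitutes \'etale cohomology and the standard conjectures, as in Proposition~\ref{affirm}.

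The main obstacle is twofold. First, Conjecture~\ref{conj}(2) is open outside the graded/geometric case, so the vanishing half is only as strong as our present control of $\overline{A_*(A)}\subq$; for a general isolated hypersurface singularity admitting a resolution one would need a Hodge-theoretic substitute --- presumably extracted from the mixed Hodge structure on the vanishing cohomology of the Milnor fibre --- which is exactly the difficulty already built into Conjecture~\ref{modconj}(5). Second, even in the geometric case the delicate point in the positivity argument is not the existence of a pairing on $\overline{A_{(d+1)/2}(A)}\subq$ but its precise identification, \emph{with the correct sign}, as a positive multiple of $(-1)^{(d+1)/2}$ times the Hodge--Riemann form: one must track the sign carefully through the Riemann--Roch comparison and verify the compatibility ensuring that the definite form on $\mathrm{CH}^{n/2}_{num}(X)\subq\big/h\!\cdot\!\mathrm{CH}^{n/2-1}_{num}(X)\subq$ descends to $(-1)^{(d+1)/2}\hc^A$, and not merely to a form sharing its radical.
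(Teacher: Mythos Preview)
The statement you are addressing is stated in the paper as a \emph{conjecture}; the paper offers no proof of it whatsoever. There is therefore nothing in the paper to compare your proposal against.

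Your proposal is also not a proof, and you essentially concede this in your final paragraph. Concretely: the vanishing half is reduced to Conjecture~\ref{conj}(2), which is open in general (Proposition~\ref{affirm} covers only the graded case, and even there part~(2) is conditional on the standard conjectures). The ``grading property'' $\hc^A(A_i(A),A_j(A))=0$ for $i+j\neq d+1$ that you invoke is not established anywhere in the paper; it is itself part of the content of the conjecture, and saying it ``should come from'' the bivariant formalism of Sections~\ref{biva}--\ref{thetabiva} is not an argument. For the positivity half, your sketch applies only to the graded situation of Proposition~\ref{affirm}, and even there the crucial step---identifying $\hc^A$ with the correct signed multiple of the Hodge--Riemann form under the surjection of (7.5) in \cite{K23}---is asserted rather than carried out.

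In short: the paper leaves this statement open, and your write-up is a plausible roadmap (reduce to $\overline{A_*(A)}\subq$ via Theorem~\ref{kazu1}, invoke Conjecture~\ref{conj}, then Hodge--Riemann in the graded case) rather than a proof. The genuine gaps are exactly the ones you name yourself.
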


\section{A bivariant class on K-groups}\label{biva}

\begin{Definition}\label{bivariantK}
\begin{rm}
Let $Y$ be a scheme and $X$ a closed subscheme of $Y$.
Consider the following fibre square of schemes:
\begin{equation}\label{eq1}
\begin{array}{ccc}
X' & \longrightarrow & Y' \\
\downarrow & \Box & \phantom{\scriptstyle g}\downarrow {\scriptstyle g}\\
X & \longrightarrow & Y
\end{array} 
\end{equation}
Suppose that 
\[
\varphi_g : G_0(Y') \longrightarrow G_0(X') 
\]
is a homomorphism between K-groups,
where $G_0( \ )$ denotes the Grothendieck group of
coherent sheaves.

We say that a collection of homomorphisms
\[
\{ 
\varphi_g : G_0(Y') \longrightarrow G_0(X') \mid
\mbox{$g : Y' \rightarrow Y$ is a morphism of schemes} 
\}
\]
is a {\em bivariant class} on K-groups for $X \hookrightarrow Y$
if the following three conditions are satisfied.
\begin{itemize}
\item[(${\rm B}_1$)]
If $h : Y" \rightarrow Y'$ is proper, $g : Y' \rightarrow Y$ arbitrary, and one forms the fibre diagram
\begin{equation}\label{eq2}
\begin{array}{ccc}
X" & \longrightarrow & Y" \\
{\scriptstyle h'}\downarrow \phantom{\scriptstyle h'} & \Box & \phantom{\scriptstyle h}\downarrow {\scriptstyle h} \\
X' & \longrightarrow & Y' \\
\downarrow & \Box & \phantom{\scriptstyle g}\downarrow {\scriptstyle g} \\
X & \longrightarrow & Y
\end{array} ,
\end{equation}
then the diagram
\[
\begin{array}{ccc}
G_0(Y") & \stackrel{\varphi_{gh}}{\longrightarrow} & G_0(X") \\
{\scriptstyle h_*}\downarrow \phantom{\scriptstyle h_*} &  & 
\phantom{\scriptstyle h'_*}\downarrow {\scriptstyle h'_*} \\
G_0(Y') & \stackrel{\varphi_{g}}{\longrightarrow} & G_0(X') 
\end{array}
\]
is commutative,
where $h_*$ is the push-forward map induced by the proper morphism
$h$ (e.g.\ \cite{F}).
\item[(${\rm B}_2$)]
If $h : Y" \rightarrow Y'$ is flat, $g : Y' \rightarrow Y$ arbitrary, and one forms the fibre diagram~(\ref{eq2}),
then the diagram
\[
\begin{array}{ccc}
G_0(Y') & \stackrel{\varphi_{g}}{\longrightarrow} & G_0(X') \\
{\scriptstyle h^*}\downarrow \phantom{\scriptstyle h^*} &  & 
\phantom{\scriptstyle {h'}^*}\downarrow {\scriptstyle {h'}^*} \\
G_0(Y") & \stackrel{\varphi_{gh}}{\longrightarrow} & G_0(X") 
\end{array}
\]
is commutative,
where $h^*$ is the pull-back map induced by the flat morphism
$h$ (e.g.\ \cite{F}).
\item[(${\rm B}_3$)]
Let $Z'$ be a scheme and $Z"$ be a closed subscheme of $Z'$.
Let ${\Bbb G}.$ be a bounded locally free complex on $Z'$ which is exact on $Z' \setminus Z"$.
If $g : Y' \rightarrow Y$, $h : Y' \rightarrow Z'$ are morphisms, and one forms the fibre diagram
\begin{equation}\label{B3}
\begin{array}{ccccc}
X" & \longrightarrow & Y" & \longrightarrow & Z" \\
{\scriptstyle i"}\downarrow \phantom{\scriptstyle i"} & \Box & \phantom{\scriptstyle i'}\downarrow {\scriptstyle i'} & \Box & \phantom{\scriptstyle i}\downarrow {\scriptstyle i}\\
X' & \stackrel{f}{\longrightarrow} & Y'  & \stackrel{h}{\longrightarrow} & Z' \\
\downarrow & \Box & \phantom{\scriptstyle g}\downarrow {\scriptstyle g} & & \\
X & \longrightarrow & Y & & 
\end{array} ,
\end{equation}
then the diagram
\[
\begin{array}{ccc}
G_0(Y') & \stackrel{\varphi_{g}}{\longrightarrow} & G_0(X') \\
{\scriptstyle \chi_{h^*({\Bbb G}.)}}\downarrow \phantom{\scriptstyle \chi_{h^*({\Bbb G}.)}}
&  & \phantom{\scriptstyle \chi_{(hf)^*({\Bbb G}.)}}\downarrow {\scriptstyle \chi_{(hf)^*({\Bbb G}.)}} \\
G_0(Y") & \stackrel{\varphi_{gi'}}{\longrightarrow} & G_0(X") 
\end{array}
\]
is commutative,
where $\chi_{h^*({\Bbb G}.)}$ is the map taking the
alternating sum of the homologies of the complex 
$h^*({\Bbb G}.) \otimes_{{\mathcal O}_{Y'}} {\mathcal F}$
for a coherent ${\mathcal O}_{Y'}$-module ${\mathcal F}$.
\end{itemize} 
\end{rm}
\end{Definition}

Under a mild condition, we can naturally define a bivariant class 
on Chow groups in the sense of
Fulton (\cite{F}, Section~17)
corresponding to a bivariant class on K-groups in Definition~\ref{bivariantK}
(see Remark~\ref{bivariantC}).

\begin{Example}
\begin{rm}
Let $X$ be a closed subscheme of a scheme $Y$.
Let ${\Bbb F}.$ be a bounded locally free ${\mathcal O}_X$-complex
which is exact on $Y \setminus X$.
For a fibre square as in (\ref{eq1}),
we define
\[
\chi({\Bbb F}.)_g : G_0(Y') \longrightarrow G_0(X') 
\]
by 
\[
\chi({\Bbb F}.)_g([{\mathcal F}])
= \sum_{i}(-1)^i[H_i(g^*({\Bbb F}.) \otimes_{{\mathcal O}_{Y'}} {\mathcal F})] .
\]
Then, 
\[
\{ 
\chi({\Bbb F}.)_g : G_0(Y') \longrightarrow G_0(X') \mid
\mbox{$g : Y' \rightarrow Y$ is a morphism of schemes} 
\}
\]
is a bivariant class on K-groups for $X \hookrightarrow Y$.

It is easy to check that (${\rm B}_2$) and (${\rm B}_3$) are satisfied.
The condition (${\rm B}_1$) is proved using a spectral sequence.
For a coherent ${\mathcal O}_{Y"}$-module ${\mathcal F}$,
we show that both of $h'_* \chi({\Bbb F}.)_{gh}([{\mathcal F}])$ 
and $\chi({\Bbb F}.)_{g}h_*([{\mathcal F}])$ coincides with
\[
\sum_i(-1)^i[Rh_*^i((gh)^*({\Bbb F}.)\otimes_{{\mathcal O}_{Y"}}{\mathcal F})] \in G_0(X') .
\]
\end{rm}
\end{Example}

\begin{Remark}\label{bivariantC}
\begin{rm}
Let $Y$ be a scheme and $X$ a closed subscheme of $Y$.
Assume that there exists a proper surjective morphism $Z \rightarrow Y$
such that $Z$ is a regular scheme.

If we consider Grothendieck groups and Chow groups with
rational coefficients,
there exists the natural one-to-one corresponding 
between bivariant classes on K-groups for $X \hookrightarrow Y$ and 
bivariant classes on Chow groups  for $X \hookrightarrow Y$ as follows.

\begin{enumerate}
\item 
Let
\[
\{ 
\varphi_g : G_0(Y')\subq \longrightarrow G_0(X')\subq \mid
\mbox{$g : Y' \rightarrow Y$ is a morphism of schemes} 
\}
\]
be a bivariant class on K-groups for $X \hookrightarrow Y$
as in Definition~\ref{bivariantK}.

We denote the composite map of
\[
A_*(Y')\subq \stackrel{\tau_{Y'}^{-1}}{\longrightarrow} G_0(Y')\subq \stackrel{\varphi_g}{\longrightarrow} G_0(X')\subq \stackrel{\tau_{X'}}{\longrightarrow} A_*(X')\subq
\]
by $c_g : A_*(Y')\subq \rightarrow A_*(X')\subq$,
where $\tau_{Y'}$ and $\tau_{X'}$ are the isomorphisms given by 
Riemann-Roch theorem (Chapter~18 and 20 in Fulton~\cite{F})
in the category of $S$-schemes.

Then, the collection of homomorphisms
\[
\{  c_g : A_*(Y')\subq \rightarrow A_*(X')\subq \mid
\mbox{$g : Y' \rightarrow Y$ is a morphism of schemes} 
\}
\]
satisfies the conditions  (${\rm C}_1$),  (${\rm C}_2$) and  (${\rm C}_3$)
in Definition~17.1 in Fulton~\cite{F}, i.e., it is a bivariant class
on Chow groups for $X \hookrightarrow Y$.

The condition (${\rm C}_1$) is easily checked by definition.
The condition (${\rm C}_3$) is proved using 
Theorem~17.1 in Fulton~\cite{F}.
It is a troublesome task to prove (${\rm C}_2$),
since the Riemann-Roch map $\tau$ can not commute with flat pull-back maps.
In order to prove (${\rm C}_2$), we need the assumption that 
there exists a proper surjective morphism $\pi : Z \rightarrow Y$
such that $Z$ is a regular scheme.
We just give a sketch of a proof here.
(We do not use this result in this paper.)
First, we show that, there exists a bounded locally free 
${\mathcal O}_Z$-complex ${\Bbb F}.$ such that,
for any $Z$-scheme $h : Z' \rightarrow Z$,
$\varphi_{\pi h}$ coincides with $\chi({\Bbb F}.)_h$.
Using the fact that localized Chern characters 
are bivariant classes on Chow group (in particular,
compatible with flat pull-back maps), we can prove (${\rm C}_2$).

\item
Conversely, let
\[
\{  c_g : A_*(Y')\subq \rightarrow A_*(X')\subq \mid
\mbox{$g : Y' \rightarrow Y$ is a morphism of schemes} 
\}
\]
be a bivariant class on Chow groups for $X \hookrightarrow Y$.

We denote the composite map of
\[
G_0(Y')\subq \stackrel{\tau_{Y'}}{\longrightarrow} A_*(Y')\subq \stackrel{c_g}{\longrightarrow} A_*(X')\subq \stackrel{\tau_{X'}^{-1}}{\longrightarrow} G_0(X')\subq
\]
by $\varphi_g : G_0(Y')\subq \rightarrow G_0(X')\subq$.

Then, the collection of homomorphisms
\[
\{ 
\varphi_g : G_0(Y')\subq \longrightarrow G_0(X')\subq \mid
\mbox{$g : Y' \rightarrow Y$ is a morphism of schemes} 
\}
\]
is a bivariant class on K-groups for $X \hookrightarrow Y$.

The condition (${\rm B}_1$) is easily checked by definition.
The condition (${\rm B}_3$) is proved using 
the fact that a localized Chern character is
commutative with any bivariant class on Chow groups
as in Roberts~\cite{Ro}.
It is a delicate task to prove (${\rm B}_2$),
since the Riemann-Roch map $\tau$ can not commute with flat pull-back maps.
In order to prove (${\rm B}_2$), we need the assumption that 
there exists a proper surjective morphism $Z \rightarrow Y$
such that $Z$ is a regular scheme. Then
one can prove (${\rm B}_2$) in the same way as (${\rm C}_2$)
in (1).
\end{enumerate}
\end{rm}
\end{Remark}

In the rest of this section, we give a sufficient condition
for a bivariant class of K-groups to coincide with
$\{ \chi({\Bbb F}.)_g \}$ for some bounded finite $A$-free complex ${\Bbb F}.$.

\begin{Theorem}\label{maintheorem1}
Let $A$ be a Noetherian local domain that is a homomorphic image 
of a regular local ring.
Assume that $\dim A > 0$.
Let $I$ be a non-zero ideal of $A$.
Put $Y = \spec A$ and $X = \spec A/I$.

We assume that there exists a resolution of singularity of $Y$,
i.e., 
a proper birational morphism $\pi : Z \rightarrow Y$
such that $Z$ is regular.
Put $W = \pi^{-1}(I)$ and $U = Z \setminus W$.
Assume that $U$ is isomorphic to 
$Y \setminus X$.
Let $i: W \rightarrow Z$ be the inclusion.
Let $i_* : G_0(W) \rightarrow G_0(Z)$  be
the induced map by $i$.

Let
\[
\{ 
\varphi_g : G_0(Y') \longrightarrow G_0(X') \mid
\mbox{$g : Y' \rightarrow Y$ is a morphism of schemes} 
\}
\]
be a bivariant class on K-groups for $X \hookrightarrow Y$.

If $i_*\varphi_\pi([{\mathcal O}_Z]) = 0$ in $G_0(Z)$,
then there exists a bounded $A$-free complex ${\Bbb F}.$
satisfing the following conditions:
\begin{enumerate}
\item
The complex ${\Bbb F}.$ is exact on $Y \setminus X$.
\item
For any morphism of schemes $g : Y' \rightarrow Y$,
the map 
\[
\varphi_g \otimes 1 : G_0(Y')\subq \longrightarrow G_0(X')\subq
\]
 coincides with
\[
\chi({\Bbb F}.)_g \otimes 1 : 
G_0(Y')\subq \longrightarrow G_0(X')\subq .
\]
\end{enumerate}
\end{Theorem}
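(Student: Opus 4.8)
The plan is to reduce to the absolute case and build $\mathbb{F}_.$ directly on $Y$ out of a sheaf-theoretic complex on $Z$ whose localized Chern character realizes $\varphi_\pi$. First I would invoke a descent statement: since $\pi\colon Z\to Y$ is a proper surjective morphism with $Z$ regular, a bivariant class $\{\varphi_g\}$ on $K$-groups for $X\hookrightarrow Y$ is, after tensoring with $\mathbb{Q}$, determined by its value on $Z$-schemes — that is, there exists a bounded locally free complex $\mathbb{G}_.$ on $Z$, exact off $W$, such that $\varphi_{\pi h}=\chi(\mathbb{G}_.)_h$ for every $h\colon Z'\to Z$ (this is exactly the mechanism already sketched in the proof of Remark~\ref{bivariantC}(1), applied with $\pi$ the resolution). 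The hypothesis $i_*\varphi_\pi([\mathcal{O}_Z])=0$ in $G_0(Z)$ translates into $i_*\bigl(\sum_i(-1)^i[H_i(\mathbb{G}_.)]\bigr)=0$, i.e. the cycle-theoretic obstruction to $\mathbb{G}_.$ being "resolvable on $Y$" vanishes.

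Next I would push $\mathbb{G}_.$ down to $Y$. Using that $\mathbb{G}_.$ is exact on $U\cong Y\setminus X$ and that $\pi$ is an isomorphism there, the derived pushforward $R\pi_*(\mathbb{G}_.)$ is a perfect complex on $Y$ (Grothendieck duality / finiteness of $\pi$) which is exact on $Y\setminus X$. The key numerical input is that the Euler characteristic $\sum_i(-1)^i[R^i\pi_*H_j(\mathbb{G}_.\otimes-)]$-type classes match $\varphi_\pi$ on all fibre squares — here I would use $({\rm B}_1)$ (compatibility with the proper pushforward $\pi_*$) together with $({\rm B}_2)$ and $({\rm B}_3)$ to check that $\chi(R\pi_*\mathbb{G}_.)_g$ agrees with $\varphi_g$ rationally for every $g\colon Y'\to Y$. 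Finally, replace the perfect complex $R\pi_*(\mathbb{G}_.)$ by an honest bounded complex of finite free $A$-modules (possible since $A$ is local and the complex is perfect), obtaining $\mathbb{F}_.$ with property~(1); property~(2) is then the compatibility just verified.

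Where does $i_*\varphi_\pi([\mathcal{O}_Z])=0$ get used? Precisely in guaranteeing that $R\pi_*(\mathbb{G}_.)$ can be taken to be a complex all of whose cohomology sheaves are supported on $X$ so that $\chi(\mathbb{F}_.)$ lands in $G_0(X)$ correctly; equivalently, it kills the part of the class of $\mathbb{G}_.$ that would otherwise contribute to $G_0(Y)$ away from $X$ after pushforward. I would formalize this by working with the localization sequence $G_0(W)\to G_0(Z)\to G_0(U)\to 0$ and its analogue on $Y$, and observing that the hypothesis says the image of $[\mathbb{G}_.]$ in $G_0(Z)$ comes from $G_0(W)$, which is what lets the pushed-forward complex be adjusted to be acyclic outside $X$.

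\textbf{Main obstacle.} The hard part will be the bookkeeping in the descent step — producing $\mathbb{G}_.$ on $Z$ from the abstract bivariant class and proving $\varphi_{\pi h}=\chi(\mathbb{G}_.)_h$ for \emph{all} $h$, not just $h=\mathrm{id}$ — because a bivariant class is a lot of data and one must show it is captured by a single complex. The trick (as in Remark~\ref{bivariantC}) is that on the regular scheme $Z$ the Riemann–Roch isomorphism $\tau_Z$ is available and localized Chern characters \emph{are} bivariant, so one transports $\{\varphi_g\}$ to a bivariant class on Chow groups of $Z$, represents it by a localized Chern character $\mathrm{ch}(\mathbb{G}_.)$, and transports back; the subtlety is the failure of $\tau$ to commute with flat pullback, which is exactly why the regular resolution $Z$ is needed in the hypotheses. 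A secondary technical point is checking that $R\pi_*$ of a complex exact on the open set $U$ stays perfect and exact on the corresponding open set of $Y$ — this is standard but must be stated carefully since $\pi$ is only birational, not finite.
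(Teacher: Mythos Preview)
Your approach takes a detour through $Z$ that the paper avoids, and in doing so you misidentify where the hypothesis $i_*\varphi_\pi([\mathcal{O}_Z])=0$ enters. If $\mathbb{G}_.$ is a bounded locally free complex on $Z$ exact off $W$, then $R\pi_*\mathbb{G}_.$ already has cohomology supported on $X$ automatically, since $\pi$ is an isomorphism over $Y\setminus X$; no extra hypothesis is needed for that. So your explanation of the role of the hypothesis is wrong, and this leaves the actual content of the theorem unaccounted for in your argument.

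The paper's proof is more direct and uses the hypothesis in a sharp way. By Thomason--Trobaugh there is an exact sequence
\[
K_0^I(A)\ \stackrel{\eta\pi^*}{\longrightarrow}\ G_0(W)\ \stackrel{i_*}{\longrightarrow}\ G_0(Z),
\]
where $K_0^I(A)$ is the Grothendieck group of perfect $A$-complexes exact off $X$ (this uses $\dim A>0$ to see that $K_0^I(A)\to K_0(A)$ is zero). The hypothesis says precisely that $\varphi_\pi([\mathcal{O}_Z])\in G_0(W)$ lies in the kernel of $i_*$, hence comes from some class $[\mathbb{F}_.]\in K_0^I(A)$; this is how $\mathbb{F}_.$ is produced, directly on $Y$, with no descent from $Z$ and no $R\pi_*$. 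Then $\phi_g:=\varphi_g-\chi(\mathbb{F}_.)_g$ is again a bivariant class, now satisfying $\phi_\pi([\mathcal{O}_Z])=0$ in $G_0(W)$, and a separate rigidity lemma (proved by induction on dimension via Chow's lemma, Nagata compactification, and the axioms $({\rm B}_1)$--$({\rm B}_3)$) shows this forces $\phi_g\otimes 1=0$ for every $g$.

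Your descent step---producing $\mathbb{G}_.$ on $Z$ so that $\varphi_{\pi h}=\chi(\mathbb{G}_.)_h$ for all $h$---is exactly the hard ingredient the paper flags in Remark~\ref{bivariantC} as delicate and explicitly says it does \emph{not} use. Even granting $\mathbb{G}_.$, you would still need to verify $\chi(R\pi_*\mathbb{G}_.)_g=\varphi_g$ for arbitrary $g\colon Y'\to Y$, which requires base-change and projection-formula arguments you only gesture at; carried out, that verification would essentially reproduce the paper's rigidity lemma anyway. So the proposal is not so much wrong as it is an indirect route that imports an unproved hard step, while the paper's Thomason--Trobaugh argument produces $\mathbb{F}_.$ on $Y$ in one stroke.
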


Remark that, if $A$ is an excellent local domain containing 
${\Bbb Q}$, and regular on $\spec A \setminus \spec A/I$,
there exists a resolution of singularity of $\spec A$
satisfying the condition in Theorem~\ref{maintheorem1}.
For any excellent local domain of any characteristic,
it is expected to exist such a resolution of singularities.

\proof
By Thomason-Trobaugh~\cite{TT}, we have the following commutative diagram
\[
\begin{array}{ccccc}
& & G_0(W) & \stackrel{i_*}{\longrightarrow} & G_0(Z) \\
& & \phantom{\scriptstyle \eta}\|{\scriptstyle \eta} & & \| \\
K_1(U) & \longrightarrow & K_0^W(Z) & \longrightarrow & K_0(Z) \\
\| & & \phantom{\scriptstyle \pi^*}\uparrow{\scriptstyle \pi^*} & & \uparrow  \\
K_1(U) & \longrightarrow & K_0^I(A) & \longrightarrow & K_0(A)
\end{array} ,
\]
where $K_0^I(A)$ is the Grothendieck group of perfect $A$-complexes
which are exact outside of $X = \spec A/I$.
The map $\eta$ takes the alternating sum of homologies of
complexes.

Since $\dim A > 0$, the homomorphism $K_0^I(A) \rightarrow K_0(A)$ is zero.

Therefore, we have an exact sequence
\begin{equation}\label{exact}
K_0^I(A)  \stackrel{\eta\pi^*}{\longrightarrow}
G_0(W) \stackrel{i_*}{\longrightarrow} G_0(Z) .
\end{equation}

Consider the cycle $\varphi_\pi ([{\mathcal O}_Z]) \in G_0(W)$.
By the assumption, we have 
\[
i_*\varphi_\pi ([{\mathcal O}_Z]) = 0
\]
in $G_0(Z)$.
By the exact sequence (\ref{exact}),
we have a bounded $A$-free complex ${\Bbb F}.$ 
which is exact on $Y \setminus X$ such that
\begin{equation}\label{siki}
\varphi_\pi ([{\mathcal O}_Z]) = 
\eta\pi^*([{\Bbb F}.]) = 
\chi({\Bbb F}.)_\pi ([{\mathcal O}_Z]) 
\end{equation}
in $G_0(W)$.
Here, remark that, for a complex ${\Bbb G}.$,
$-[{\Bbb G}.]$ coincides with the shifted complex $[{\Bbb G}.(-1)]$
in the Grothendieck group of complexes since the mapping cone 
$C({\Bbb G}. \stackrel{1}{\rightarrow} {\Bbb G}.)$ is exact, and
the sequence of complexes
\[
0 \longrightarrow {\Bbb G}. \longrightarrow 
C({\Bbb G}. \stackrel{1}{\rightarrow} {\Bbb G}.)
\longrightarrow {\Bbb G}.(-1) \longrightarrow 0
\]
is exact.
Therefore, we can choose a complex ${\Bbb F}.$
satisfying (\ref{siki}).

We shall show that the map
$\varphi_g \otimes 1$
 coincides with
$\chi({\Bbb F}.)_g \otimes 1$ for any $g$.

Remark that 
\[
\{ 
\varphi_g - \chi({\Bbb F}.)_g : G_0(Y') \rightarrow G_0(X') \mid
\mbox{$g : Y' \rightarrow Y$ is a morphism of schemes}
\}
\]
is also a bivariant class on K-groups for $X \hookrightarrow  Y$.
By (\ref{siki}), it is satisfied that
\[
(\varphi_{\pi} - \chi({\Bbb F}.)_{\pi}) ([{\mathcal O}_Z]) = 0
\]
in $G_0(W)$.

We have only to prove the following lemma.

\begin{Lemma}
Let $Y$ be a scheme and $X$ be a closed subscheme of $Y$.
Let $\pi : Z \rightarrow Y$ be a proper surjective morphism such that $Z$ is a regular scheme.
Let
\[
\{ 
\phi_g : G_0(Y') \rightarrow G_0(X') \mid
\mbox{$g : Y' \rightarrow Y$ is a morphism of schemes} 
\}
\]
be a bivariant class on K-groups for $X \hookrightarrow  Y$.
Let $W = \pi^{-1}(X)$.

If $\phi_\pi([{\mathcal O}_Z]) = 0$ in $G_0(W)$,
then
\[
\phi_g \otimes 1 : G_0(Y')\subq \longrightarrow G_0(X')\subq
\]
is zero for any morphism $g : Y' \rightarrow Y$.
\end{Lemma}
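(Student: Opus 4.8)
The plan is to reduce the vanishing of the bivariant class $\{\phi_g\}$ to the single equation $\phi_\pi([\mathcal{O}_Z]) = 0$ that we are given, by exploiting the three bivariant axioms together with the fact that $\pi$ is proper surjective and $Z$ is regular. The key observation is that a bivariant class is determined by its behaviour on structure sheaves pulled back from $Y$, and that over the regular scheme $Z$ every coherent sheaf has a finite locally free resolution, which lets axiom $(\mathrm{B}_3)$ come into play.

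First I would reduce to showing $\phi_g([\mathcal{O}_{Y'}]) = 0$ for every $g: Y' \to Y$ after tensoring with $\mathbb{Q}$. Indeed, by the projection-type formula that follows from $(\mathrm{B}_3)$ (taking $Z' = Y'$, the complex $\mathbb{G}_\bullet$ an arbitrary bounded locally free resolution, and $h$ the identity), the value $\phi_g([\mathcal{F}])$ for a general coherent sheaf $\mathcal{F}$ on $Y'$ is controlled by $\phi_g([\mathcal{O}_{Y'}])$ composed with a $\chi$-operator; more precisely, if $\phi_g([\mathcal{O}_{Y'}]) = 0$ then applying $(\mathrm{B}_3)$ with the Koszul-type or free resolution data forces $\phi_g$ to annihilate the classes $[\mathcal{F}]$ built from locally free sheaves, and since (working rationally, and since $A$ is a homomorphic image of a regular ring so that $G_0(Y')_\mathbb{Q}$ is generated by such classes via the Riemann–Roch isomorphism and dévissage) these generate $G_0(Y')_\mathbb{Q}$, we conclude $\phi_g \otimes 1 = 0$. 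So the whole problem collapses to: $\phi_g([\mathcal{O}_{Y'}]) = 0$ rationally for all $g$.

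Next I would handle $[\mathcal{O}_{Y'}]$ itself. Given $g: Y' \to Y$, form the fibre product $Z' = Z \times_Y Y'$ with its projections $h: Z' \to Z$ and $g': Z' \to Y'$; note $g'$ is proper surjective since $\pi$ is. Because $\pi$ is birational (or more generally because $\pi$ is proper surjective with $Z$ regular and $Y$ a domain), after tensoring with $\mathbb{Q}$ the pushforward $g'_*: G_0(Z')_\mathbb{Q} \to G_0(Y')_\mathbb{Q}$ hits $[\mathcal{O}_{Y'}]$: indeed $g'_*[\mathcal{O}_{Z'}] = [\mathcal{O}_{Y'}] + (\text{lower-dimensional terms})$, and by a Noetherian induction on $\dim Y'$ one arranges that the lower-dimensional corrections are also in the image, so $[\mathcal{O}_{Y'}]$ lies in $\operatorname{im}(g'_*)$ rationally. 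Now axiom $(\mathrm{B}_1)$ applied to the proper morphism $h: Z' \to Z$ over $Y$ gives the commuting square relating $\phi_{\pi h} = \phi_{g g'}$ on $G_0(Z')$ to $\phi_\pi$ on $G_0(Z)$ via the pushforwards; chasing $[\mathcal{O}_{Z'}]$ around, and using $(\mathrm{B}_2)$ for the flat (or at least the base-change) compatibility of $\phi_\pi$ along $Z' \to Z$ to identify $\phi_{\pi h}([\mathcal{O}_{Z'}])$ with the pullback of $\phi_\pi([\mathcal{O}_Z])$, we get that $\phi_{g g'}([\mathcal{O}_{Z'}])$ pushes forward to $\phi_g([\mathcal{O}_{Y'}])$; but $\phi_{\pi h}([\mathcal{O}_{Z'}])$ is the image under $(h')^*$ (or the analogous base-change map on the $X$-side) of $\phi_\pi([\mathcal{O}_Z]) = 0$, hence $\phi_g([\mathcal{O}_{Y'}]) = 0$. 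Combining with the reduction of the previous paragraph finishes the proof.

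The main obstacle I anticipate is the careful use of $(\mathrm{B}_2)$: as the authors flag repeatedly in Remark~\ref{bivariantC}, flat pull-back is delicate, and here $h: Z' \to Z$ need not be flat, so one cannot directly invoke $(\mathrm{B}_2)$ to say $\phi_{\pi h}([\mathcal{O}_{Z'}]) = {h'}^*\phi_\pi([\mathcal{O}_Z])$. The fix will be to replace this step by a dévissage: stratify $Z$ so that over each stratum $h$ is flat, or use the generic flatness of $h$ together with Noetherian induction to peel off the non-flat locus, at each stage landing in a lower-dimensional situation where the same statement is assumed by induction. Alternatively, one can avoid $(\mathrm{B}_2)$ entirely and use only $(\mathrm{B}_1)$ and $(\mathrm{B}_3)$: express $[\mathcal{O}_{Z'}]$ in $G_0(Z')_\mathbb{Q}$ in terms of classes $h^*[\text{locally free on } Z]$ (possible since $Z$ is regular and $h$ has finite Tor-dimension over the regular $Z$, so $Lh^*$ is defined on all of $G_0$), and then $(\mathrm{B}_3)$ converts $\phi_{\pi h}$ on such classes into $\chi$-twists of $\phi_\pi([\mathcal{O}_Z]) = 0$. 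Getting this bookkeeping exactly right — in particular ensuring everything is compatible after $\otimes\, \mathbb{Q}$ and that the Riemann–Roch generation statement for $G_0(Y')_\mathbb{Q}$ is legitimately available — is where the real work lies.
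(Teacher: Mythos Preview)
Your overall two-step strategy --- reduce to evaluating $\phi$ on structure sheaves, then use the fibre product with $Z$ together with the bivariant axioms --- matches the paper's. But both steps, as written, have real gaps.

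\medskip

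\textbf{The reduction to $[\mathcal{O}_{Y'}]$.} Your argument for this uses $(\mathrm{B}_3)$ with $h=\mathrm{id}_{Y'}$ and a locally free resolution of an arbitrary $\mathcal{F}$ on $Y'$. That resolution need not exist: $Y'$ is an arbitrary $Y$-scheme and there is no ring $A$ in the lemma's hypotheses to invoke. The correct reduction is by $(\mathrm{B}_1)$ and d\'evissage: $G_0(Y')$ is generated by classes $[\mathcal{O}_V]$ for integral closed subschemes $j:V\hookrightarrow Y'$, and $(\mathrm{B}_1)$ gives $\phi_g(j_*[\mathcal{O}_V])=j'_*\phi_{gj}([\mathcal{O}_V])$, so it suffices to kill $\phi_{g'}([\mathcal{O}_{Y'}])$ for all $g'$ with $Y'$ integral.

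\medskip

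\textbf{The non-flat base change.} You correctly identify that $(\mathrm{B}_2)$ does not apply to $h:Z'\to Z$, but neither proposed fix actually closes the gap. Your second fix (``express $[\mathcal{O}_{Z'}]$ via $Lh^*$ and use $(\mathrm{B}_3)$'') goes in the wrong direction: $(\mathrm{B}_3)$ lets you pass from $\phi$ on a scheme to $\phi$ on a \emph{closed subscheme} cut out by a perfect complex, so knowing $\phi_\pi([\mathcal{O}_Z])=0$ does not, via $(\mathrm{B}_3)$ alone, give $\phi_{\pi h}([\mathcal{O}_{Z'}])=0$. The paper's key device, which you are missing, is this: for $f:Z'\to Z$ with $Z'$ integral, use Nagata compactification and Chow's lemma (plus $(\mathrm{B}_1)$ and induction on $\dim Z'$) to reduce to $f$ \emph{projective}; then factor $f$ as a closed immersion $Z'\hookrightarrow V$ followed by a \emph{smooth} $p:V\to Z$. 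Now $p$ is flat, so $(\mathrm{B}_2)$ gives $\phi_{\pi p}([\mathcal{O}_V])=0$; and $V$ is regular (smooth over regular $Z$), so $\mathcal{O}_{Z'}$ has a bounded locally free resolution $\mathbb{G}_\bullet$ on $V$, and $(\mathrm{B}_3)$ yields $\phi_{\pi f}([\mathcal{O}_{Z'}])=\chi_{\mathbb{G}_\bullet|_{V\times_YX}}\,\phi_{\pi p}([\mathcal{O}_V])=0$. Once this is in hand, your fibre-product-and-pushforward argument (choosing an integral component of $Z\times_Y Y'$ generically finite over $Y'$, applying $(\mathrm{B}_1)$, and inducting on $\dim Y'$) finishes exactly as you outline.
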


\proof
We shall prove it in two steps.

%
%
%
%
Step~1.
Let $f : Z' \rightarrow Z$ be a morphism such that $Z'$ is an integral scheme.
We shall prove that $\phi_{\pi f}([{\mathcal O}_{Z'}]) = 0$.

We shall prove it by induction on $\dim Z'$. 
Suppose that the assertion is true if the dimension is less than $\dim Z'$.
By Nagata's compactification and (${\rm B}_2$),
we may assume that $f$ is a proper morphism.
By Chow's lemma,
there exists a proper birational morphism $h : Z" \rightarrow Z'$ such that
$f h : Z" \rightarrow Z$ is projective.
By (${\rm B}_1$) and induction on dimension,
it is enough to show that $\phi_{\pi f h}([{\mathcal O}_{Z"}]) = 0$.
So, we may assume that $f$ is projective.
Then $f$ can be factored as 
\[
Z' \stackrel{i}{\hookrightarrow} V \stackrel{p}{\longrightarrow} Z
\]
where $p : V \rightarrow Z$ is a smooth morphism  and 
$i$ is a closed immersion.
By (${\rm B}_2$),  $\phi_{\pi p}([{\mathcal O}_V]) = 0$.
Let ${\Bbb G}.$ be a bounded locally free  ${\mathcal O}_V$-resolution
of ${\mathcal O}_{Z'}$.
Then, by (${\rm B}_3$),
we have
\[
\phi_{\pi f}([{\mathcal O}_{Z'}]) = \phi_{\pi f}\chi_{{\Bbb G}.}([{\mathcal O}_{V}])
= \chi_{{\Bbb G}.\otimes {\mathcal O}_{V'}}\phi_{\pi p}([{\mathcal O}_V]) = 0 ,
\]
where $V' = V \times_Y X$.

\vspace{2mm}

Step~2.
Let $g : Y' \rightarrow Y$ be a morphism such that $Y'$ is an integral scheme.
We shall prove that $\phi_{g}([{\mathcal O}_{Y'}])$ is a torsion of $G_0(X')$ by induction on $\dim Y'$. 
Suppose that the assertion is true if the dimension is less than $\dim Y'$.
Consider the following fibre square:
\[
\begin{array}{ccc}
Z' & \stackrel{g'}{\longrightarrow} & Z \\
{\scriptstyle \pi'}\downarrow \phantom{\scriptstyle \pi'} & \Box & \phantom{\scriptstyle \pi}\downarrow {\scriptstyle \pi} \\
Y' & \stackrel{g}{\longrightarrow} & Y
\end{array} 
\]
Let $i : Z" \rightarrow Z'$ be a closed immersion such that
$Z"$ is integral and  $\pi' i : Z" \rightarrow Y'$ is
generically finite and surjective.
Then, by Step~1 and (${\rm B}_1$), 
$\phi_{g}((\pi' i)_*([{\mathcal O}_{Z"}])) = 0$.
We can show that $\phi_{g}([{\mathcal O}_{Y'}])$ is a torsion
using induction on dimension.
\qed

\section{Theta invariant as a bivariant class on K-groups}\label{thetabiva}

\begin{Definition}\label{theta}
\begin{rm}
Let $(A, m)$ be a hypersurface.
Let $N$ be a finitely generated $A$-module, 
and ${\Bbb F}.$ be an $A$-free resolution of $N$.
Let $I$ be an ideal of $A$.
Assume that $N_P$ is an $A_P$-module of finite projective dimension
for $P \in \spec A \setminus V(I)$.
Put $Y = \spec A$ and $X = \spec A/I$.

For a fibre square~(\ref{eq1}),
consider the homomorphism
\[
\theta(N)_g : G_0(Y') \longrightarrow G_0(X')
\]
defined by 
\[
\theta(N)_g([{\mathcal F}]) = [H_{2 k}(g^*({\Bbb F}.)\otimes_{{\mathcal O}_{Y'}}{\mathcal F})]
- [H_{2 k -1}(g^*({\Bbb F}.)\otimes_{{\mathcal O}_{Y'}}{\mathcal F})] ,
\]
for a sufficiently large $k$,
where ${\mathcal F}$ is a coherent ${\mathcal O}_{Y'}$-module.
It is easy to check that $\theta(N)_g$ is well-defined.
\end{rm}
\end{Definition}

With notation as in Definition~\ref{bivariantK}, we obtain a collection of homomorphisms
\[
\{ 
\theta(N)_g : G_0(Y') \longrightarrow G_0(X') \mid
\mbox{$g : Y' \rightarrow Y$ is a morphism of schemes} 
\} .
\]

\begin{Theorem}
With notation as in Definition~\ref{theta}, the collection of homomorphisms
\[
\{ 
\theta(N)_g : G_0(Y') \longrightarrow G_0(X') \mid
\mbox{$g : Y' \rightarrow Y$ is a morphism of schemes} 
\}
\]
is a bivariant class on K-groups for $X \hookrightarrow Y$.
\end{Theorem}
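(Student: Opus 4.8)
The plan is to verify the three axioms (${\rm B}_1$), (${\rm B}_2$), (${\rm B}_3$) of Definition~\ref{bivariantK} for $\{\theta(N)_g\}$ (notation as in Definition~\ref{theta}), following the pattern of the verification in the Example following Definition~\ref{bivariantK} but with a complete resolution replacing the finite complex used there. First I would replace the $A$-free resolution $\mathbb{F}.$ of $N$ by a complete (Tate) resolution $\widehat{\mathbb{F}}.$: a doubly infinite, $2$-periodic, exact complex of finite free $A$-modules agreeing with $\mathbb{F}.$ in all large degrees. Normalizing the parity so that the degree-$0$ term is even-indexed, one has
\[
\theta(N)_g([\mathcal{F}]) \;=\; [H_{0}(g^{*}\widehat{\mathbb{F}}.\otimes_{\mathcal{O}_{Y'}}\mathcal{F})] \;-\; [H_{-1}(g^{*}\widehat{\mathbb{F}}.\otimes_{\mathcal{O}_{Y'}}\mathcal{F})].
\]
The point of passing to $\widehat{\mathbb{F}}.$ is that, since $N_P$ has finite projective dimension for every $P\in\spec A\setminus V(I)$, the complex $\widehat{\mathbb{F}}.\otimes_{A}A_{P}$ is contractible there, so for any $g\colon Y'\to Y$ and any coherent $\mathcal{O}_{Y'}$-module $\mathcal{G}$ the complex $g^{*}\widehat{\mathbb{F}}.\otimes\mathcal{G}$ has \emph{all} of its homology supported on $X'$. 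Thus the ``$2$-periodic Euler characteristic'' $[H_{0}]-[H_{-1}]$ above is a well-defined element of $G_{0}(X')$, and it is this form of $\theta(N)_g$ I would work with. Condition (${\rm B}_2$) is then immediate: if $h$ is flat, $h^{*}$ is exact and commutes with $g^{*}\widehat{\mathbb{F}}.\otimes(-)$ and with homology, so $h'^{*}H_{i}(g^{*}\widehat{\mathbb{F}}.\otimes\mathcal{F}) = H_{i}((gh)^{*}\widehat{\mathbb{F}}.\otimes h^{*}\mathcal{F})$, and the difference of the $0$-th and $(-1)$-st homologies gives the stated commutativity.

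For (${\rm B}_3$) I would form, on $Y'$, the total complex of $g^{*}\widehat{\mathbb{F}}.\otimes_{\mathcal{O}_{Y'}}h^{*}\mathbb{G}.\otimes_{\mathcal{O}_{Y'}}\mathcal{F}$ and compare its two spectral sequences. Filtering by $\mathbb{G}.$ first gives, at the $E_{2}$ stage, the complexes $g^{*}\widehat{\mathbb{F}}.\otimes H_{j}(h^{*}\mathbb{G}.\otimes\mathcal{F})$, and the alternating sum over $j$ of their $2$-periodic Euler characteristics is $\theta(N)_{gi'}(\chi_{h^{*}(\mathbb{G}.)}([\mathcal{F}]))$; filtering the other way produces $\chi_{(hf)^{*}(\mathbb{G}.)}(\theta(N)_{g}([\mathcal{F}]))$. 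Since $\mathbb{G}.$ is bounded, one direction of the double complex is finite, and periodicity along the $\widehat{\mathbb{F}}.$-direction lets one extract the ``degree $0$ minus degree $-1$'' part compatibly on both sides; exactness of $\mathbb{G}.$ off $Z''$ keeps everything supported on $X''$. This is of the same routine nature as the corresponding check in the Example.

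The substance is (${\rm B}_1$). Given $h\colon Y''\to Y'$ proper and $\mathcal{F}$ coherent on $Y''$, I would compute the homology of $Rh_{*}\big((gh)^{*}\widehat{\mathbb{F}}.\otimes_{\mathcal{O}_{Y''}}\mathcal{F}\big)$ in two ways. Because each term of $\widehat{\mathbb{F}}.$ is $\mathcal{O}_{Y}$-free, the projection formula identifies this object with $g^{*}\widehat{\mathbb{F}}.\otimes_{\mathcal{O}_{Y'}}Rh_{*}\mathcal{F}$. One spectral sequence has $E_{2}$-terms $R^{p}h'_{*}$ applied to the (periodic, $X''$-supported) homology sheaves of $(gh)^{*}\widehat{\mathbb{F}}.\otimes\mathcal{F}$; the other has $E_{2}$-terms the homology sheaves of $g^{*}\widehat{\mathbb{F}}.\otimes R^{q}h_{*}\mathcal{F}$. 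Both $E_{2}$-pages are $2$-periodic along the $\widehat{\mathbb{F}}.$-direction, and the mechanism I would invoke is: \emph{for a $2$-periodic spectral sequence, the class $[\text{total-degree-}0\text{ part of }E_{r}] - [\text{total-degree-}(-1)\text{ part of }E_{r}]$ is independent of $r$}, since the images of $d_{r}$ into total degrees $0$ and $-1$ are carried to one another by the periodicity shift and hence have equal classes, cancelling in the difference. Evaluating this $r$-independent invariant at $E_{2}$ yields $h'_{*}\theta(N)_{gh}([\mathcal{F}])$ from the first spectral sequence and $\sum_{q}(-1)^{q}\theta(N)_{g}([R^{q}h_{*}\mathcal{F}])=\theta(N)_{g}(h_{*}[\mathcal{F}])$ from the second; since both compute $[H_{0}(Rh_{*}(\cdots))]-[H_{-1}(Rh_{*}(\cdots))]$, they coincide, all classes lying in $G_{0}(X')$.

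The hardest part, and the step I would be most careful with, is making the ``periodic spectral sequence'' argument of (${\rm B}_1$) precise. Three points need attention: (i) justifying the projection-formula identification for the unbounded complex $\widehat{\mathbb{F}}.$ (e.g.\ by truncating $\widehat{\mathbb{F}}.$ below and using that it agrees with $\mathbb{F}.$ in high degrees, or via a $K$-flatness remark); (ii) convergence of the two spectral sequences despite $\widehat{\mathbb{F}}.$ being unbounded --- here each total degree receives only finitely many contributions because $Rh_{*}\mathcal{F}$ is bounded; and (iii) checking that the $r$-independent invariant of a $2$-periodic spectral sequence really does evaluate at $E_{2}$ to the two $\theta$-expressions claimed, which is precisely where periodicity is used to pair the even- and odd-degree homology sheaves against one another with opposite signs. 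Once these are settled, (${\rm B}_1$) holds, and the three conditions together give the theorem.
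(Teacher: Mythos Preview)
Your proposal is correct and follows essentially the same approach as the paper. The paper works directly with the minimal $A$-free resolution $\mathbb{F}.$ of $N$ in large degrees $2k$, $2k-1$ (where it is already $2$-periodic) rather than passing to a complete resolution, and carries out the spectral-sequence invariance for (${\rm B}_1$) by explicitly tracking each page's differentials; your reformulation via the Tate resolution and the abstract ``the even-minus-odd class in a $2$-periodic spectral sequence is $r$-independent'' principle is a clean repackaging of exactly the same computation, and the paper likewise dismisses (${\rm B}_2$) as immediate and (${\rm B}_3$) as parallel to (${\rm B}_1$).
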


\proof
Let ${\Bbb F}.$ be an $A$-free resolution of $N$.

First we prove (${\rm B}_1$).
Consider the diagram~(\ref{eq2}).
For a coherent ${\mathcal O}_{Y"}$-module ${\mathcal F}$,
we shall prove 
\[
h'_*\theta(N)_{gh}([{\mathcal F}]) = \theta(N)_{g}h_*([{\mathcal F}]) .
\]
It is enough to show that the both sides are equal to 
\[
[H_{2k}(Rh_*((gh)^*({\Bbb F}.) \otimes_{{\mathcal O}_{Y"}}  {\mathcal F}))]
- [H_{2k - 1}(Rh_*((gh)^*({\Bbb F}.) \otimes_{{\mathcal O}_{Y"}}  {\mathcal F}))]
\]
for a sufficiently large $k$.

Let 
\[
0 \rightarrow {\mathcal F} \rightarrow {\mathcal I} ^{\displaystyle \cdot}
\]
be an injective resolution of ${\mathcal F}$.
Consider the double complex
\[
h_*((gh)^*({\Bbb F}.) \otimes_{{\mathcal O}_{Y"}}  {\mathcal I} ^{\displaystyle \cdot})
= g^*({\Bbb F}.) \otimes_{{\mathcal O}_{Y'}}  h_*({\mathcal I} ^{\displaystyle \cdot}) .
\]
Since $(gh)^*({\Bbb F}.) \otimes_{{\mathcal O}_{Y"}}  {\mathcal I} ^{\displaystyle \cdot}$ is an injective resolution
of $(gh)^*({\Bbb F}.) \otimes_{{\mathcal O}_{Y"}}  {\mathcal F}$,
the $k$th homology of the total complex of the above double complex is
\[
H_k(Rh_*((gh)^*({\Bbb F}.) \otimes_{{\mathcal O}_{Y"}}  {\mathcal F})) .
\]
Consider the spectral sequence
\[
E_2^{p, q} = H_p(g^*({\Bbb F}.) \otimes_{{\mathcal O}_{Y'}}  R^qh_*({\mathcal F}))\Longrightarrow 
H_{p-q}(Rh_*((gh)^*({\Bbb F}.) \otimes_{{\mathcal O}_{Y"}}  {\mathcal F})) .
\]
Then,
\[
\theta(N)_{g}h_*([{\mathcal F}])
= \sum_q(-1)^q [E_2^{2k, q}] - \sum_q(-1)^q [E_2^{2k-1, q}]
\]
for a sufficiently large $k$.
Here, $E_3$-terms of the spectral sequence are the homologies
of the complex
\[
\cdots \longrightarrow E_2^{p+2, q+1} \longrightarrow E_2^{p, q} 
\longrightarrow E_2^{p-2, q-1} \longrightarrow \cdots 
.
\]
If $p$ is big enough, then
the above complex coincides with
\[
\cdots \longrightarrow E_2^{p, q+1} \longrightarrow E_2^{p, q} 
\longrightarrow E_2^{p, q-1} \longrightarrow \cdots 
.
\]
Therefore, we have
\[
\sum_q(-1)^q [E_2^{2k, q}] - \sum_q(-1)^q [E_2^{2k-1, q}]
= \sum_q(-1)^q [E_3^{2k, q}] - \sum_q(-1)^q [E_3^{2k-1, q}]
\]
for a sufficiently large $k$.
Next, 
$E_4$-terms of the spectral sequence are the homologies
of the complex
\[
\cdots \longrightarrow E_3^{p+3, q+2} \longrightarrow E_3^{p, q} 
\longrightarrow E_3^{p-3, q-2} \longrightarrow \cdots 
.
\]
If $p$ is big enough, then
the above complex coincides with
\[
\cdots \longrightarrow E_2^{2k-1, q+4} \longrightarrow E_2^{2k, q+2} \longrightarrow E_2^{2k-1, q} 
\longrightarrow E_2^{2k, q-2} \longrightarrow E_2^{2k-1, q-4} \cdots 
\]
if $p = 2k - 1$, and
\[
\cdots \longrightarrow E_2^{2k, q+4} \longrightarrow E_2^{2k-1, q+2} \longrightarrow E_2^{2k, q} 
\longrightarrow E_2^{2k-1, q-2} 
\longrightarrow E_2^{2k, q-4} \cdots 
\]
if $p = 2k$.
Therefore, we have
\[
\sum_q(-1)^q [E_3^{2k, q}] - \sum_q(-1)^q [E_3^{2k-1, q}]
= \sum_q(-1)^q [E_4^{2k, q}] - \sum_q(-1)^q [E_4^{2k-1, q}]
\]
for a sufficiently large $k$.
Repeating this argument, we obtain
\begin{eqnarray*}
&  & \theta(N)_{g}h_*([{\mathcal F}]) \\
& = & \sum_q(-1)^q [E_2^{2k, q}] - \sum_q(-1)^q [E_2^{2k-1, q}] \\
& = & \sum_q(-1)^q [E_\infty^{2k, q}] - \sum_q(-1)^q [E_\infty^{2k-1, q}] \\
& = & 
[H_{2k}(Rh_*((gh)^*({\Bbb F}.) \otimes_{{\mathcal O}_{Y"}}  {\mathcal F}))]
- [H_{2k - 1}(Rh_*((gh)^*({\Bbb F}.) \otimes_{{\mathcal O}_{Y"}}  {\mathcal F}))] 
\end{eqnarray*}
in $G_0(X')$ for a sufficiently large $k$.

We choose another injective resolution of the complex 
$(gh)^*({\Bbb F}.) \otimes_{{\mathcal O}_{Y"}}  {\mathcal F}$.
We take a double complex ${\Bbb H}..$ of ${\mathcal O}_{Y"}$-modules such that
\begin{itemize}
\item
the total complex of ${\Bbb H}..$ is an injective resolution of
$(gh)^*({\Bbb F}.) \otimes_{{\mathcal O}_{Y"}}  {\mathcal F}$,
\item
$H^q({\Bbb H}..)$ coincide with 
$g^*({\Bbb F}.)\otimes_{{\mathcal O}_{Y'}} R^qh_*({\mathcal F})$
for each $q$,
\item
$H_p({\Bbb H}..)$ is the injective resolution of $H_p((gh)^*({\Bbb F}.) \otimes_{{\mathcal O}_{Y"}}  {\mathcal F})$ for each $p$,
\item
$h_*(H_p({\Bbb H}..)) = H_p(h_*({\Bbb H}..))$ for each $p$.
\end{itemize}
Consider the following spectral sequence.
\[
{}'E_2^{p, q} = H^qH_p(h_*({\Bbb H}..)) =
 R^qh'_*(H_p((gh)^*({\Bbb F}.) \otimes_{{\mathcal O}_{Y"}}  {\mathcal F}))
\Longrightarrow 
H_{p-q}(Rh_*((gh)^*({\Bbb F}.) \otimes_{{\mathcal O}_{Y"}}  {\mathcal F})) .
\]
Then,
\[
h'_*\theta(N)_{gh}([{\mathcal F}])
= \sum_q(-1)^q [{}'E_2^{2k, q}] - \sum_q(-1)^q [{}'E_2^{2k-1, q}]
\]
for a sufficiently large $k$.
Here, we have
\begin{eqnarray*}
& & \sum_q(-1)^q [{}'E_2^{2k, q}] - \sum_q(-1)^q [{}'E_2^{2k-1, q}] \\
& = & \sum_q(-1)^q [{}'E_\infty^{2k, q}] - \sum_q(-1)^q [{}'E_\infty^{2k-1, q}] \\
& = & 
[H_{2k}(Rh_*((gh)^*({\Bbb F}.) \otimes_{{\mathcal O}_{Y"}}  {\mathcal F}))]
- [H_{2k - 1}(Rh_*((gh)^*({\Bbb F}.) \otimes_{{\mathcal O}_{Y"}}  {\mathcal F}))] 
\end{eqnarray*}
in $G_0(X')$ for a sufficiently large $k$.
It will be proved in the same way as in the case $E_*^{p,q}$ as above. 
The condition (${\rm B}_1$) is proved.

It is easy to prove (${\rm B}_2$).

Now we  prove (${\rm B}_3$).
Consider the diagram~(\ref{B3}).
For a coherent ${\mathcal O}_{Y'}$-module ${\mathcal F}$,
we shall prove 
\[
\chi_{(hf)^*({\Bbb G}.)} \theta(N)_{g}([{\mathcal F}]) = 
\theta(N)_{gi'}\chi_{h^*({\Bbb G}.)}([{\mathcal F}]) .
\]
It is enough to show that the both sides are equal to 
\[
[H_{2k}(g^*({\Bbb F}.) \otimes_{{\mathcal O}_{Y'}} h^*({\Bbb G}.) \otimes_{{\mathcal O}_{Y'}} {\mathcal F})]
- [H_{2k-1}(g^*({\Bbb F}.) \otimes_{{\mathcal O}_{Y'}} h^*({\Bbb G}.) \otimes_{{\mathcal O}_{Y'}} {\mathcal F})]
\]
for a sufficiently large $k$.
The proof will be done in the same way as in (${\rm B}_1$), so we omit it.
\qed

\section{Theta invariant for hypersurface with isolated singularity}\label{mainresult}

Let $(A,\m)$ be a hypersurface with isolated singularity.
Then, the theta invariant induces a map
\[
\theta^A : G_0(A)\otimes G_0(A) \longrightarrow {\Bbb Z}
\]
by $\theta^A(\sum_i\pm [M_i]\otimes[N_i]) = \sum_i\pm \theta^A(M_i, N_i)$.
We sometimes denote $\theta^A(\alpha\otimes \beta)$ simply by $\theta^A(\alpha, \beta)$.

By the following theorem, we know that
$\theta^A(M,N) = 0$ if either $M$ or $N$ is numerically equivalent to $0$.
See Corollary~\ref{cornum}.

\begin{Theorem}\label{maintheorem}
Let $(A, \m)$ be a hypersurface with isolated singularity.
Assume $\dim A > 0$.
Let $N$ be a finitely generated $A$-module.

We assume that there exists a resolution of singularity of $\spec A$,
i.e., a proper birational morphism $\pi : Z \rightarrow \spec A$
such that $Z$ is regular.
Put $W = \pi^{-1}(m)$ and $U = Z \setminus W$.
Assume that $U$ is isomorphic to $\spec A \setminus \spec A/m$.

Then, there exist $A$-modules $N_1$ and $N_2$ such that
\begin{itemize}
\item
$\ell_A(N_1) = \ell_A(N_2) < \infty$
\item
${\rm pd}_A(N_1) = {\rm pd}_A(N_2) = \dim A$
\item
$\theta^A(N, M) = \chi^A(N_1,M) - \chi^A(N_2,M)$ for any finitely generated $A$-module $M$.
\end{itemize}
\end{Theorem}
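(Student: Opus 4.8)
The plan is to combine Theorem~\ref{maintheorem1} with the bivariant class $\{\theta(N)_g\}$ constructed in Section~\ref{thetabiva}, applied to the ideal $I = \m$, so $X = \spec A/\m = \{\m\}$ and $Y = \spec A$. Since $A$ has isolated singularity, $N_P$ has finite projective dimension for every $P \neq \m$, so Definition~\ref{theta} applies and by the theorem of Section~\ref{thetabiva} the collection $\{\theta(N)_g\}$ is a bivariant class on K-groups for $\{\m\} \hookrightarrow \spec A$. The resolution $\pi : Z \to \spec A$ is assumed to exist with $W = \pi^{-1}(\m)$ and $U = Z \setminus W \cong \spec A \setminus \{\m\}$, which is exactly the hypothesis of Theorem~\ref{maintheorem1}.

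First I would verify the key vanishing hypothesis of Theorem~\ref{maintheorem1}, namely that $i_* \theta(N)_\pi([\mathcal{O}_Z]) = 0$ in $G_0(Z)$. The point is that $\theta(N)_\pi([\mathcal{O}_Z]) \in G_0(W)$ is, by definition, $[H_{2k}(\pi^*({\Bbb F}.) \otimes \mathcal{O}_Z)] - [H_{2k-1}(\pi^*({\Bbb F}.) \otimes \mathcal{O}_Z)]$ for large $k$, where ${\Bbb F}.$ is a free resolution of $N$; pushing forward to $G_0(Z)$, this is a difference of classes supported on $W$. The homology sheaves $H_j(\pi^*{\Bbb F}.)$ for $j \geq 1$ are supported on $W$ because $\pi^*{\Bbb F}.$ is a complex of locally free $\mathcal{O}_Z$-modules which is exact on $U$ (since $N$ is locally free there and $U \cong \spec A \setminus \{\m\}$). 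Since $Z$ is regular, $[H_j(\pi^*{\Bbb F}.)]$ is governed by the alternating sum $\sum_j (-1)^j [H_j(\pi^*{\Bbb F}.)] = [\pi^*{\Bbb F}.] = \pi^*[N] \cdot [\mathcal{O}_Z]$ in $G_0(Z)$, and the 2-periodicity of Tor over the hypersurface forces $[H_{2k}] - [H_{2k-1}]$ to be independent of $k$ and in fact to equal a fixed element; combining with $K_0^I(A) \to K_0(A)$ being zero (as $\dim A > 0$) as in the proof of Theorem~\ref{maintheorem1}, one gets $i_*\theta(N)_\pi([\mathcal{O}_Z]) = 0$. I expect the careful bookkeeping here — relating the periodic pair $(H_{2k}, H_{2k-1})$ of the pulled-back resolution to classes that die in $G_0(Z)$ — to be the main obstacle, and it likely requires tracking that the even/odd homology difference is the image of something in $K_0^I(A)$.

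Granting that, Theorem~\ref{maintheorem1} yields a bounded $A$-free complex ${\Bbb F}'.$, exact on $\spec A \setminus \{\m\}$, such that $\theta(N)_g \otimes 1 = \chi({\Bbb F}'.)_g \otimes 1$ for all $g : Y' \to Y$. Taking $g = \id$ and $Y' = \spec A$, and evaluating on $[M]$ for a finitely generated module $M$, gives $\theta^A(N, M) = \chi_{{\Bbb F}'.}([M])$ up to torsion; since both sides are integers one gets exact equality, so $\theta^A(N,M) = \sum_i (-1)^i \ell(H_i({\Bbb F}'. \otimes_A M))$. Because $A$ is a hypersurface, in particular Cohen-Macaulay, the remark at the end of Section~\ref{prep} (via Proposition~2 of \cite{RS}) says the class $[{\Bbb F}'.]$ in the Grothendieck group of bounded free complexes supported at $\{\m\}$ is an integral combination of classes of free resolutions of finite-length, finite-projective-dimension modules. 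Writing $[{\Bbb F}'.] = \sum a_j [G_j.]$ with each $G_j.$ such a resolution of a module $L_j$, we get $\chi_{{\Bbb F}'.}([M]) = \sum_j a_j \chi^A(L_j, M)$.

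Finally I would package the $L_j$'s into two modules $N_1, N_2$ with the asserted properties. Collecting the $L_j$ with positive coefficient into a direct sum (with multiplicities) to form a module $N_1'$ and those with negative coefficient into $N_2'$, one has $\theta^A(N,M) = \chi^A(N_1', M) - \chi^A(N_2', M)$, and $\pd_A N_i' = \dim A$ since the maximal Tor of a finite-length module over a $d$-dimensional Cohen-Macaulay local ring is in degree $d$ (and $\pd$ is finite by construction). To force $\ell(N_1) = \ell(N_2)$, note $\chi^A(A/\m, M) = 0$ is impossible in general, but one can instead adjust: whichever of $N_1', N_2'$ has smaller length, say $N_2'$, replace $N_1 := N_1'$ and $N_2 := N_2' \oplus (A/\m^{(t)})^{?}$ — more precisely, since for a finite-length finite-pd module $L$ over a hypersurface $\chi^A(L, M)$ depends only on $[L] \in G_0$, and one can add a free resolution of a high syzygy of $k$ which has $\chi^A(-, M) = 0$ but arbitrary length, one balances the lengths without changing the difference of Euler characteristics. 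I would spell out this last length-balancing trick carefully, as it is the one genuinely fiddly bookkeeping step, but it is routine. This completes the proof.
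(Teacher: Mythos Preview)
Your overall architecture is the same as the paper's: apply Theorem~\ref{maintheorem1} to the bivariant class $\{\theta(N)_g\}$ with $I=\m$, then use Roberts--Srinivas to trade the resulting perfect complex for a difference of resolutions of finite-length, finite-pd modules. Two points, however, are not handled correctly.

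\textbf{The vanishing $i_*\theta(N)_\pi([\mathcal O_Z])=0$.} Your sketch here is vague and partly off-track: invoking that $K_0^I(A)\to K_0(A)$ is zero is irrelevant at this stage (that fact is used inside the proof of Theorem~\ref{maintheorem1}, not in checking its hypothesis). The paper's argument is a direct four-line computation you are missing. Take the minimal free resolution ${\Bbb F}.$ of $N$; for $k\gg 0$ it is $2$-periodic, say $\cdots\xrightarrow{\alpha}F_{2k+1}\xrightarrow{\beta}F_{2k}\xrightarrow{\alpha}F_{2k-1}\xrightarrow{\beta}\cdots$ with all $F_n$ of the same rank. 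Writing $K_\alpha,I_\alpha,K_\beta,I_\beta$ for the kernels and images of $\pi^*\alpha,\pi^*\beta$, the four short exact sequences
\[
0\to K_\alpha\to\pi^*F_{2k}\to I_\alpha\to 0,\quad
0\to K_\beta\to\pi^*F_{2k-1}\to I_\beta\to 0,
\]
\[
0\to I_\beta\to K_\alpha\to H_{2k}\to 0,\quad
0\to I_\alpha\to K_\beta\to H_{2k-1}\to 0
\]
give $[H_{2k}]-[H_{2k-1}]=[\pi^*F_{2k}]-[\pi^*F_{2k-1}]=0$ in $G_0(Z)$, since the two free sheaves have the same rank. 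That is the whole argument.

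\textbf{The length equality.} Your proposed ``length-balancing trick'' does not work as written: a high syzygy of $k$ over a hypersurface is maximal Cohen--Macaulay, not of finite length, and it does not have finite projective dimension, so it cannot be used to pad $N_2'$. The paper's resolution of this is immediate and requires no adjustment at all: once you have $\theta^A(N,M)=\chi^A(N_1,M)-\chi^A(N_2,M)$ for all $M$, set $M=A$. Then $\theta^A(N,A)=0$ while $\chi^A(N_i,A)=\ell(N_i)$, so $\ell(N_1)=\ell(N_2)$ is automatic. No balancing is needed.
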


\proof
First, we shall prove
\[
i_*\theta(N)_\pi ([{\mathcal O}_Z]) = 0
\]
in $G_0(Z)$, where $i : W \rightarrow Z$ be the closed immersion.
Consider the minimal $A$-free resolution ${\Bbb F}.$ of $N$.
Then, it is written as
\[
\cdots \stackrel{\alpha}{\longrightarrow}
F_{2k+1} \stackrel{\beta}{\longrightarrow}
F_{2k} \stackrel{\alpha}{\longrightarrow}
F_{2k-1} \stackrel{\beta}{\longrightarrow} \cdots
\]
for $k \gg 0$.
Here, $F_n$'s are of the same rank for $n \gg 0$.
Here, consider the complex $\pi^*({\Bbb F}.)$
\[
\cdots \stackrel{\pi^*\alpha}{\longrightarrow}
\pi^*F_{2k+1} \stackrel{\pi^*\beta}{\longrightarrow}
\pi^*F_{2k} \stackrel{\pi^*\alpha}{\longrightarrow}
\pi^*F_{2k-1} \stackrel{\pi^*\beta}{\longrightarrow} \cdots .
\]
Then,
\[
\theta(N)_\pi ([{\mathcal O}_Z])
= [H_{2k}(\pi^*({\Bbb F}.))] - [H_{2k-1}(\pi^*({\Bbb F}.))] \in G_0(W)
\]
for $k \gg 0$.
Let $K_\alpha$ (resp.\ $I_\alpha$) be the kernel (resp.\ image)
of the map $\pi^*\alpha$.
Let $K_\beta$ (resp.\ $I_\beta$) be the kernel (resp.\ image)
of the map $\pi^*\beta$.
Then, we have the following exact sequences
\[
\begin{array}{l}
0 \longrightarrow K_\alpha \longrightarrow \pi^*F_{2k}
\longrightarrow I_\alpha \longrightarrow 0 \\
0 \longrightarrow K_\beta \longrightarrow \pi^*F_{2k-1}
\longrightarrow I_\beta \longrightarrow 0 \\
0 \longrightarrow I_\beta \longrightarrow K_\alpha \longrightarrow 
H_{2k}(\pi^*({\Bbb F}.)) \longrightarrow 0 \\
0 \longrightarrow I_\alpha \longrightarrow K_\beta \longrightarrow 
H_{2k-1}(\pi^*({\Bbb F}.)) \longrightarrow 0 .
\end{array}
\]
Therefore,
we obtain
\begin{eqnarray*}
i_*\theta(N)_\pi ([{\mathcal O}_Z]) & = &  
[H_{2k}(\pi^*({\Bbb F}.))] - [H_{2k-1}(\pi^*({\Bbb F}.))] \\
& = & ([K_\alpha] - [I_\beta]) - ([K_\beta] - [I_\alpha]) \\
& = & [\pi^*F_{2k}] - [\pi^*F_{2k-1}] = 0
\end{eqnarray*}
in $G_0(Z)$. 

Remember that,
since $A$ is Cohen-Macaulay,
the Grothendieck group of bounded $A$-free complexes
with support in $\{ m \}$ is generated by finite free resolutions of modules of finite length and finite projective dimension (see Proposition~2 in \cite{RS}).
Therefore, by Theorem~\ref{maintheorem}, there exist $A$-modules $N_1$ and $N_2$ of finite length and of finite projective dimension such that, letting ${\Bbb A}.$ (resp.\ ${\Bbb B}.$) be a finite $A$-free resolution
of $N_1$ (resp.\ $N_2$), 
we have 
\[
\theta(N)_g \otimes 1= 
\chi({\Bbb A}.\oplus {\Bbb B}.(-1))_g \otimes 1= 
\chi({\Bbb A}.)_g \otimes 1 - \chi({\Bbb B}.)_g \otimes 1 
\]
for any $g : Y' \rightarrow \spec A$.

In particular, we have
\[
\chi({\Bbb A}.)_{{\rm id}_{\spec A}} - \chi({\Bbb B}.)_{{\rm id}_{\spec A}} =
\chi({\Bbb A}.\oplus {\Bbb B}.(-1))_{{\rm id}_{\spec A}} =
\theta(N)_{{\rm id}_{\spec A}}
\]
since $G_0(\spec A/m) \simeq {\Bbb Z}$.
Hence, for any finitely generated $A$-module $M$,
we have
\begin{eqnarray*}
\theta^A(N, M) & = & \theta(N)_{{\rm id}_{\spec A}}([M]) \\
& = & \chi({\Bbb A}.)_{{\rm id}_{\spec A}}([M]) - \chi({\Bbb B}.)_{{\rm id}_{\spec A}}([M]) \\
& = & \chi_{{\Bbb A}.} ([M]) - \chi_{{\Bbb B}.}([M]) \\
& = & \chi^A(N_1, M) - \chi^A(N_2, M) .
\end{eqnarray*}
Here, note that 
\[
0 = \theta^A(N,A) = \chi^A(N_1, A) - \chi^A(N_2, A)
= \ell(N_1) - \ell(N_2) .
\]
\qed

In \cite{CW} and \cite{MPSW}, it is  proved that 
the theta invariant vanishes on cycles which are algebraic equivalent 
to $0$.  
The following corollary covers the above fact. Note that algebraic equivalence implies numerical equivalence since Euler characteristic is constant in a flat family. 
It will be discussed in Remark~\ref{algeq} in detail.

The following Corollary~\ref{cornum} contains Theorem~\ref{kazu1}.

\begin{Corollary}\label{cornum}
Let $(A, m)$ be a $d$-dimensional hypersurface with isolated singularity.
Assume $d > 0$.
Assume that there exists a resolution of singularity $\pi : Z \rightarrow \spec A$
such that $Z \setminus \pi^{-1}(m)$ is isomorphic to $\spec A \setminus \spec A/m$.
\begin{enumerate}
\item
Suppose that $\alpha, \beta \in G_0(A)$.
If one of $\alpha$ and $\beta$ is numerically equivalent to $0$ in the sense of \cite{K23}, 
then $\theta^A(\alpha,\beta) = 0$.
\item
If $\overline{G_0(A)}\subq = {\Bbb Q}[A]$ (or equivalently $\overline{A_i(A)}\subq = 0$ for $i < d$), then $\theta^A(M,N) = 0$ for any finitely generated $A$-modules $M$ and $N$.
\end{enumerate}
\end{Corollary}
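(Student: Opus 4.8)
The plan is to derive both assertions directly from Theorem~\ref{maintheorem} together with the description of numerical equivalence recalled in Section~\ref{prep}. Two elementary facts about $\theta^A$ will be used throughout: it is symmetric, $\theta^A(\alpha,\beta)=\theta^A(\beta,\alpha)$, because $\Tor^A_i(-,-)$ is; and $\theta^A(-,A)=0$, because $A$ is $A$-free so $\Tor^A_{\geq 1}(-,A)=0$. Being additive on short exact sequences, $\theta^A$ is $\ZZ$-bilinear on $G_0(A)$.

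\emph{Part (1).} By symmetry we may assume $\beta$ is numerically equivalent to $0$. Write $\alpha=\sum_i\varepsilon_i[M_i]$ with $\varepsilon_i=\pm1$ and each $M_i$ finitely generated; it suffices to show $\theta^A(M_i,\beta)=0$ for each $i$. The ring $A$ is a hypersurface with isolated singularity, $\dim A=d>0$, and $\pi\colon Z\to\spec A$ is precisely the resolution required by Theorem~\ref{maintheorem}. Applying that theorem with $N=M_i$ yields finite-length $A$-modules $N_1^{(i)},N_2^{(i)}$, both of finite projective dimension, such that $\theta^A(M_i,M)=\chi^A(N_1^{(i)},M)-\chi^A(N_2^{(i)},M)$ for every finitely generated $M$, and hence, by additivity of all three sides, $\theta^A(M_i,\beta)=\chi^A(N_1^{(i)},\beta)-\chi^A(N_2^{(i)},\beta)$. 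Each $\chi^A(N_j^{(i)},-)$ is the Euler-characteristic map $\chi_{{\Bbb F}.}$ attached to a finite free resolution ${\Bbb F}.$ of $N_j^{(i)}$, a bounded finite free complex with finite-length homology; since $A$ is Cohen--Macaulay these are exactly the complexes against which numerical equivalence is tested (Section~\ref{prep}, Proposition~2 in \cite{RS}), so $\chi^A(N_j^{(i)},\beta)=0$. Therefore $\theta^A(M_i,\beta)=0$, and $\theta^A(\alpha,\beta)=\sum_i\varepsilon_i\theta^A(M_i,\beta)=0$.

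\emph{Part (2).} First I would upgrade part (1) to $\QQ$-coefficients: if $\gamma\in G_0(A)\subq$ is numerically equivalent to $0$, choose $m\in\ZZ_{>0}$ with $m\gamma\in G_0(A)$, note that $m\gamma$ is then numerically equivalent to $0$ as an integral class, apply part (1) to get $\theta^A(\delta,m\gamma)=0$ for all $\delta$, and divide by $m$. Now, given finitely generated $M,N$, the hypothesis $\overline{G_0(A)}\subq=\QQ[A]$ says exactly that the image of $[N]$ in $\overline{G_0(A)}\subq$ equals $q\,[A]$ for some $q\in\QQ$, i.e. $[N]-q[A]$ is numerically equivalent to $0$ in $G_0(A)\subq$. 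Consequently
\[
\theta^A(M,N)=q\,\theta^A(M,A)+\theta^A\!\bigl(M,\,[N]-q[A]\bigr)=0,
\]
using $\theta^A(M,A)=0$ and the $\QQ$-version of part (1). The stated equivalence with $\overline{A_i(A)}\subq=0$ for $i<d$ is immediate from the Riemann--Roch isomorphism $\overline{\tau}$ and $\overline{A_d(A)}\subq=\QQ[\spec A]$ recorded in Section~\ref{prep}, so nothing further is needed there.

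\emph{Main obstacle.} There is no serious difficulty: the entire content sits in Theorem~\ref{maintheorem}. The only points deserving care are invoking the symmetry of $\theta^A$ to move the numerically trivial class into the second argument in part (1); checking that the free resolutions furnished by Theorem~\ref{maintheorem} really are admissible test complexes for numerical equivalence (which uses that a hypersurface is Cohen--Macaulay); and the clearing of denominators needed to pass from integral to rational numerical equivalence in part (2).
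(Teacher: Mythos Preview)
Your proof is correct and, for part~(1), follows exactly the paper's approach: apply Theorem~\ref{maintheorem} to express $\theta^A(N,-)$ as a difference of Euler characteristics $\chi_{{\Bbb A}.}-\chi_{{\Bbb B}.}$ against bounded finite free complexes with finite-length homology, and then invoke the definition of numerical equivalence (with the Cohen--Macaulay reduction to modules via \cite{RS}, as in Section~\ref{prep}).

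For part~(2) your route differs slightly from the paper's. You reduce directly to part~(1) by clearing denominators: write $[N]-q[A]$ as a rational class numerically equivalent to zero, multiply through by an integer, and apply part~(1). The paper instead uses the characterization that $\overline{G_0(A)}\subq=\QQ[A]$ is equivalent to $\chi_{{\Bbb G}.}([T])=0$ for every torsion module $T$ and every test complex ${\Bbb G}.$, then takes an exact sequence $0\to A^r\to M\to T\to 0$ with $T$ torsion and computes $\theta^A(N,M)=\theta^A(N,T)=\chi_{{\Bbb A}.}([T])-\chi_{{\Bbb B}.}([T])=0$. Your argument is a bit more streamlined and avoids restating that characterization; the paper's version has the minor advantage of keeping everything integral and making the link to the Chow-group condition explicit. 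Both are elementary consequences of Theorem~\ref{maintheorem}.
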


\proof
First, we shall prove (1).
We assume that $\alpha$ is numerically equivalent to $0$ in the sense of \cite{K23}.
We shall prove that $\theta^A(N,\alpha) = 0$ 
for any finitely generated $A$-module $N$.
By Theorem~\ref{maintheorem}, there exist bounded finite $A$-free complexes 
${\Bbb A}.$ and ${\Bbb B}.$ with homology of finite length such that
\begin{eqnarray*}
\theta^A(N, \alpha) & = & \theta(N)_{{\rm id}_{\spec A}}(\alpha) \\
& = & \chi({\Bbb A}.)_{{\rm id}_{\spec A}}(\alpha) - \chi({\Bbb B}.)_{{\rm id}_{\spec A}}(\alpha) \\
& = & \chi_{{\Bbb A}.} (\alpha) - \chi_{{\Bbb B}.}(\alpha) .
\end{eqnarray*}
It is equal to $0$ since $\alpha$ is numerically equivalent to $0$.

Next, we shall prove (2).
Remember that, for a Noetherian local domain $(A,m)$, 
$\overline{G_0(A)\subq} = {\Bbb Q}[A]$ if and only if $\chi_{{\Bbb G}.}([T]) = 0$
for any finitely generated $A$-module $T$ with $\dim T < \dim A$ and
any bounded finite free $A$-complex ${\Bbb G}.$ with homology finite length.
For a given $A$-module $M$, letting $r$ be the rank of $M$,
we have an exact sequence of the form
\[
0 \longrightarrow A^r \longrightarrow M \longrightarrow T \longrightarrow 0,
\]
where $T$ is an $A$-module with $\dim T < \dim A$.
Then, we have
\[
\theta^A(N,M) = \theta^A(N,A^r) + \theta^A(N,T) = 
\chi_{{\Bbb A}.} ([T]) - \chi_{{\Bbb B}.}([T]) = 0.
\]
For Chow groups of local rings divided by numerical equivalence,
see the following remark.
\qed

\begin{Remark}
For a Notherian local ring $(A,\m)$, $\overline{A_i(A)}\subq = 0$ for 
$i = 0, 1, \ldots, s$ if and only if $\chi_{{\Bbb G}.}([M]) = 0$ 
for any finitely generated $A$-module $M$ with $\dim M \le s$
and any bounded $A$-free complex ${\Bbb G}.$ with support in $\{ m  \}$, see \cite{K23} for details. 
\end{Remark}

\begin{Remark}\label{algeq}
\begin{rm}
For cycles of a local ring $A$,
algebraic equivalence in Definition~3.2 in \cite{CW} implies
numerical equivalence as follows.

Let $A$ be a local ring which is essentially of finite type
over an algebraically closed field $k$.
Let $B$ be an integral domain that is smooth of finite type
over $k$. 
Let $m_{\ell}$ be a maximal ideal of $B$ and
\[
i_\ell : \spec B/m_\ell \longrightarrow \spec B
\]
be the closed immersion for $\ell = 1, 2$.
We define
\[
i_\ell^* : G_0(A\otimes_kB) \longrightarrow G_0(A)
\]
by
\[
i_\ell^*([M]) = \sum_{j}(-1)^j[{\rm Tor}^B_j(M, B/m_\ell)] .
\]
Cycles of the form
\[
i_1^*(\gamma) - i_2(\gamma)
\]
generates algebraic equivalence, where $\gamma \in  G_0(A\otimes_kB)$.

We shall prove that the above cycle is numerically equivalent to $0$.
Let ${\Bbb F}.$ be a bounded finite $A$-free complex with homology finite length.
We have
\[
\chi_{{\Bbb F}.}(i_1^*(\gamma) - i_2(\gamma))
= i_1^*\chi_{{\Bbb F}.\otimes_kB}(\gamma) - 
i_2\chi_{{\Bbb F}.\otimes_kB}(\gamma)
\]
where $i_\ell^* : G_0(B) \longrightarrow G_0(k)$ in the 
right-hand side is defined by 
\[
i_\ell^*([L]) = \sum_j(-1)^j[{\rm Tor}^B_j(L, B/m_\ell)] .
\]
Using the vanishing theorem of intersection multiplicities,
we obtain
\[
i_\ell^*\chi_{{\Bbb F}.\otimes_kB}(\gamma)
= \rank_A \chi_{{\Bbb F}.\otimes_kB}(\gamma)
\]
for $\ell = 1, 2$.
Thus, $i_1^*\chi_{{\Bbb F}.\otimes_kB}(\gamma) = 
i_2\chi_{{\Bbb F}.\otimes_kB}(\gamma)$.
\end{rm}
\end{Remark}


\section{Various cases of Conjecture \ref{modconj} and other applications} \label{psd}
In this Section we prove various cases of Conjecture \ref{modconj}. Some of these results are already known but we provide alternating proof. Some, such as the positive semi-definite of $\theta^A$ in dimension $3$ (Theorem \ref{UFD}) is new. As some applications, we extend the example of Dutta-Hochster-McLaughlin to all 
non-factorial isolated hypersurface singularities of dimension three and compute the Grothendieck group modulo numerical equivalence for such rings (Corollary \ref{dhm} and \ref{last}). 

\begin{Remark}
\begin{rm}
By Theorem~\ref{maintheorem} and Proposition~\ref{affirm}, Conjecture~\ref{modconj}, part (1), (2) and (3)  are true for quasi-homogeneous case. This recovers a result of Moore-Piepmeyer-Spiroff-Walker~\cite{MPSW}.
We just need to note that the blow-up of $\spec A$ at the maximal ideal gives a resolution of singularity in this case.

Let $(A, m)$ be a $d$-dimensional hypersurface with isolated singularity.
Assume that there exists a resolution of a singularity $\pi : Z \rightarrow \spec A$
such that $Z \setminus \pi^{-1}(m)$ is isomorphic to $\spec A \setminus \spec A/m$.
Assume that $d$ is even.
If Conjecture~\ref{conj} (1) (a) is true, 
then $\theta(M,N) = 0$ by Theorem~\ref{maintheorem} for any finitely generated $A$-modules $M$ and $N$,
that is, Conjecture~\ref{modconj}(1) is true.
We remark that Conjecture \ref{modconj}(1) is true in the case of characteristic zero 
by Buchweitz-Straten \cite{BV}.
\end{rm}
\end{Remark}

\begin{lem}\label{sumext}
Let $A$ be a local ring and $N$ be a finitely generated module and $l\geq 0$ be an integer. Then in $\underline {G_0}(A)$
$$ 
(-1)^l[\syz^l(N)^*] = \sum_{i=0}^l (-1)^i[\Ext_A^i(N,A)] .
$$
\end{lem}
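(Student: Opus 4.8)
The plan is to proceed by induction on $l$, using the standard relationship between syzygies and $\Ext$ modules. For $l=0$ the statement reads $[\syz^0(N)^*] = [\Ext^0_A(N,A)] = [\Hom_A(N,A)]$, which is a tautology once we agree that $\syz^0(N) = N$. For the inductive step, I would take a minimal (or any) free presentation
\[
0 \longrightarrow \syz^{l}(N) \longrightarrow F_{l-1} \longrightarrow \syz^{l-1}(N) \longrightarrow 0,
\]
where $F_{l-1}$ is free, and apply $\Hom_A(-,A)$. This yields the long exact sequence
\[
0 \to \Hom_A(\syz^{l-1}(N),A) \to \Hom_A(F_{l-1},A) \to \Hom_A(\syz^{l}(N),A) \to \Ext^1_A(\syz^{l-1}(N),A) \to 0
\]
together with the dimension-shifting isomorphisms $\Ext^{i}_A(\syz^{l}(N),A) \cong \Ext^{i+1}_A(\syz^{l-1}(N),A)$ for $i \geq 1$, and the classical fact $\Ext^{i+1}_A(\syz^{l-1}(N),A) \cong \Ext^{i+l}_A(N,A)$ for $i \geq 1$ (obtained by iterating the syzygy shift on $N$). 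In particular $\Ext^1_A(\syz^{l-1}(N),A) \cong \Ext^{l}_A(N,A)$.

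From the four-term exact sequence above, in $\underline{G_0}(A)$ (where the class of the free module $\Hom_A(F_{l-1},A)$ vanishes) we get
\[
[\syz^{l}(N)^*] = [\syz^{l-1}(N)^*] + [\Ext^1_A(\syz^{l-1}(N),A)] = [\syz^{l-1}(N)^*] + [\Ext^{l}_A(N,A)].
\]
Multiplying by $(-1)^l$ and invoking the induction hypothesis
\[
(-1)^{l-1}[\syz^{l-1}(N)^*] = \sum_{i=0}^{l-1} (-1)^i[\Ext^i_A(N,A)]
\]
gives
\[
(-1)^l[\syz^{l}(N)^*] = -\sum_{i=0}^{l-1}(-1)^i[\Ext^i_A(N,A)] + (-1)^l[\Ext^l_A(N,A)],
\]
and I need to reconcile the sign: rewriting $-\sum_{i=0}^{l-1}(-1)^i[\cdots] = \sum_{i=0}^{l-1}(-1)^{i+1}[\cdots]$ does not immediately match, so the cleaner route is to recast the recursion as $(-1)^l[\syz^l(N)^*] - (-1)^l[\syz^{l-1}(N)^*]$ — actually the correct bookkeeping comes from noting $[\syz^{l-1}(N)^*]$ enters with the same sign on both sides after one is careful, which I will verify directly from the telescoping sum $\sum_{i=0}^l (-1)^i[\Ext^i_A(N,A)] - \sum_{i=0}^{l-1}(-1)^i[\Ext^i_A(N,A)] = (-1)^l[\Ext^l_A(N,A)]$.

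The only genuine subtlety — hardly an obstacle — is verifying that the long exact sequence obtained from $\Hom_A(-,A)$ really does truncate the way claimed, i.e. that the connecting map $\Hom_A(\syz^l(N),A) \to \Ext^1_A(\syz^{l-1}(N),A)$ is surjective; this holds because $\Ext^1_A(F_{l-1},A) = 0$ for $F_{l-1}$ free. Everything else is a formal manipulation in the reduced Grothendieck group, using only additivity on short exact sequences and the fact that classes of free modules are zero in $\underline{G_0}(A)$. I expect no real difficulty; the proof is short and self-contained, and the main thing to get right is the sign bookkeeping in the telescoping alternating sum.
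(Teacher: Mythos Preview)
Your approach is exactly the one the paper has in mind (the paper simply says ``easy by induction on $l$''), but your execution contains a sign error that is the source of your confusion at the end.

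From the four-term exact sequence
\[
0 \to (\syz^{l-1}N)^* \to F_{l-1}^* \to (\syz^l N)^* \to \Ext^1_A(\syz^{l-1}N,A) \to 0
\]
the alternating-sum relation in $G_0(A)$ gives
\[
[(\syz^l N)^*] = [F_{l-1}^*] + [\Ext^1_A(\syz^{l-1}N,A)] - [(\syz^{l-1}N)^*],
\]
so in $\underline{G_0}(A)$ you get
\[
[(\syz^l N)^*] = -\,[(\syz^{l-1}N)^*] + [\Ext^l_A(N,A)],
\]
not $+[(\syz^{l-1}N)^*]$ as you wrote. Multiplying by $(-1)^l$ now gives
\[
(-1)^l[(\syz^l N)^*] = (-1)^{l-1}[(\syz^{l-1}N)^*] + (-1)^l[\Ext^l_A(N,A)],
\]
and the induction hypothesis plugs in directly to yield $\sum_{i=0}^l (-1)^i[\Ext^i_A(N,A)]$ with no further bookkeeping. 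Once you correct this sign, the ``telescoping'' paragraph becomes unnecessary and the proof is complete.
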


\begin{proof}
This is easy  by induction on $l$.
\end{proof}

For an element $\alpha \in \underline{G_0}(A)$, let $\dim(\alpha)$ be the smallest integer $c$ such that $\alpha$ has a representation by formal sum (with coefficients)  of modules of dimensions at most $c$.

\begin{lem} \label{dual}
Suppose $A$ is Gorenstein of dimension $d$ and $N$ is of codimension $a$ and codepth $b$. Let $M = \syz^b(N)$. Then:
$$\dim ((-1)^b[M^*] - (-1)^a[N]) < d-a$$ 
\end{lem}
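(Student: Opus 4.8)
The plan is to deduce the statement from Lemma~\ref{sumext}. Since $N$ has codepth $b$, the module $M=\syz^b(N)$ is maximal Cohen--Macaulay over the Gorenstein ring $A$; in particular $\syz^b(N)$ is independent of the chosen resolution up to free summands and $(-)^*$ preserves free modules, so $(-1)^b[M^*]$ is a well-defined element of $\underline{G_0}(A)$. Lemma~\ref{sumext} then gives
\[
(-1)^b[M^*] = (-1)^b[\syz^b(N)^*] = \sum_{i=0}^{b}(-1)^i[\Ext_A^i(N,A)]
\]
in $\underline{G_0}(A)$, so it remains to compare the right-hand side with $(-1)^a[N]$ up to dimension $<d-a$.

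The outer terms are controlled as follows. Because $A$ is Cohen--Macaulay, $\operatorname{grade}(N)$ equals its codimension $a$, so $\Ext_A^i(N,A)=0$ for $i<a$ and the sum runs over $a\le i\le b$. For $i>a$, localizing at a prime $\p$ with $\height\p<i$ makes $A_\p$ a Gorenstein local ring of dimension $<i$, hence $\Ext_A^i(N,A)_\p=\Ext_{A_\p}^i(N_\p,A_\p)=0$; therefore $\Supp\Ext_A^i(N,A)$ consists of primes of height $\ge i$ and $\dim\Ext_A^i(N,A)\le d-i<d-a$. Consequently
\[
(-1)^b[M^*] - (-1)^a[N] = (-1)^a\bigl([\Ext_A^a(N,A)]-[N]\bigr) + \sum_{i=a+1}^{b}(-1)^i[\Ext_A^i(N,A)],
\]
and the last summand has dimension $<d-a$; it remains to establish the same bound for the first.

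The main point is that $[\Ext_A^a(N,A)]$ and $[N]$ have the same dimension-$(d-a)$ cycle. Both modules have dimension at most $d-a$ (for the Ext, by the support computation above with $i=a$), so it suffices to check that for each prime $\p$ with $\dim A/\p=d-a$ — equivalently $\height\p=a$, since the Cohen--Macaulay local ring $A$ is catenary and equidimensional — one has $\ell_{A_\p}(\Ext_A^a(N,A)_\p)=\ell_{A_\p}(N_\p)$. For such a $\p$, the hypothesis $\dim N=d-a$ forces $\p$ to be minimal in $\Supp N$, so $N_\p$ is a finite-length module over the Gorenstein local ring $A_\p$ of dimension $a$; local duality then identifies $\Ext_{A_\p}^a(N_\p,A_\p)$ with the Matlis dual of $H^0_{\m_\p}(N_\p)=N_\p$ (and annihilates the other $\Ext_{A_\p}^i$), so it has the same length as $N_\p$. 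This gives the equality of multiplicities, hence $\dim\bigl([\Ext_A^a(N,A)]-[N]\bigr)<d-a$, and combining with the previous step finishes the proof.

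The step I expect to be the main obstacle — and the only place where the Gorenstein hypothesis is genuinely used rather than just Cohen--Macaulayness — is this last comparison of $\Ext^a$ with $N$ through Matlis duality, together with the mild bookkeeping needed to pass from "equality of top-dimensional cycles" to the statement "$\dim(\alpha)<d-a$" for the class $\alpha\in\underline{G_0}(A)$.
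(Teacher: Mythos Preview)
Your proof is correct and follows essentially the same route as the paper: apply Lemma~\ref{sumext}, observe that $\Ext_A^i(N,A)$ has dimension $<d-a$ for $i\neq a$, and compare lengths of $N$ and $\Ext_A^a(N,A)$ at height-$a$ primes. Your write-up supplies considerably more detail (the grade argument for $i<a$, the injective-dimension bound for $i>a$, and the explicit invocation of local duality for the length equality), whereas the paper records only the key length identity $\ell_{A_\p}(N_\p)=\ell_{A_\p}(\Ext_A^a(N,A)_\p)$ at minimal primes of $N$ and declares the rest evident. One trivial imprecision: when you say ``$\dim N=d-a$ forces $\p$ to be minimal in $\Supp N$'' you should add ``provided $\p\in\Supp N$'', the case $N_\p=0$ being harmless.
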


\begin{proof}
For each minimal prime $p$ of $N$, we have
$$\ell_{A_p}(N_p)  = \ell_{A_p} (\Ext_A^a(N,A)_p)$$ 
which shows $\dim([N] - [\Ext_A^a(N,A)])<d-a$.  Lemma \ref{sumext} applies to get the desired conclusion, since the support of 
$\Ext_A^i(N,A)$ evidently has dimension less than $d-a$
for $i \neq a$. 
\end{proof}

\begin{prop}
Let $\dim A=6$. Then the statements (\ref{con1}), (\ref{con3}) of Conjecture \ref{modconj} are equivalent. 
\end{prop}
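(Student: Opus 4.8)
The implication (\ref{con1})$\Rightarrow$(\ref{con3}) is immediate: for $d=6$ statement (\ref{con1}) asserts that $\theta^A$ vanishes identically, in particular on every pair with $\dim M\le 3=d/2$. For the converse the plan is to prove that the reduced Grothendieck group modulo numerical equivalence $\overline{\underline{G_0}(A)}_{\QQ}$ is \emph{already generated by classes of modules of dimension at most $3$}. Granting this, (\ref{con3}) says exactly that $\theta^A$ kills a generating set of $\overline{\underline{G_0}(A)}_{\QQ}$ on one side, hence (by symmetry and bilinearity of $\theta^A$, and since $\theta^A$ factors through $\overline{\underline{G_0}(A)}$ by Theorem~\ref{kazu1}) that $\theta^A\equiv 0$, which is (\ref{con1}).

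To establish the generation statement I would first recall from Section~\ref{prep} that the Riemann--Roch map gives $\overline{\underline{G_0}(A)}_{\QQ}\cong\bigoplus_{i=0}^{d-1}\overline{A_i(A)}_{\QQ}$ (the top summand $\overline{A_d(A)}_{\QQ}=\QQ[\spec A]$ is killed on passing from $G_0$ to $\underline{G_0}$), and that $\overline{A_0(A)}_{\QQ}=0$ since $\dim A>0$. Hence for $d=6$ it suffices to show $\overline{A_5(A)}_{\QQ}=\overline{A_4(A)}_{\QQ}=0$: this forces $\overline{\underline{G_0}(A)}_{\QQ}=\bigoplus_{i=1}^{3}\overline{A_i(A)}_{\QQ}$, which is by construction generated by classes of cycles of dimension $\le 3$. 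The vanishing of $\overline{A_5(A)}_{\QQ}=A_{d-1}(A)_{\QQ}=\Cl(A)_{\QQ}$ is Grothendieck's parafactoriality theorem (SGA~2): a local complete intersection with isolated singularity of dimension $\ge 4$ is factorial.

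The heart of the argument --- and the step that really uses $d=6$ --- is the codimension-two vanishing $\overline{A_4(A)}_{\QQ}=0$, i.e.\ that $A_{d-2}(A)_{\QQ}$ is numerically trivial. Here the plan is to pass to the punctured spectrum $U=\spec A\setminus\{\m\}$, use the localization isomorphism $A_i(A)_{\QQ}\cong A_i(U)_{\QQ}=\mathrm{CH}^{\,d-i}(U)_{\QQ}$ (valid since $\{\m\}$ is zero-dimensional), and invoke the comparison --- the one underlying \cite{K23} and the proof of Proposition~\ref{affirm} --- between numerical equivalence on cycles of $\spec A$ and the cycle class map into the \'etale cohomology $H^{2\bullet}(U)$: as numerical equivalence is coarser than homological equivalence, $\overline{A_{d-2}(A)}_{\QQ}$ is a subquotient of $H^{4}(U)$. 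Now $U$ is the punctured spectrum of an isolated hypersurface singularity of dimension $d$, whose link is $(d-2)$-connected by Milnor's theorem, so $H^{j}(U)=0$ for $1\le j\le d-2$; with $d=6$ this yields $H^{4}(U)=0$ and therefore $\overline{A_4(A)}_{\QQ}=0$. (In the graded case this is contained in Proposition~\ref{affirm}; in general one uses the $\ell$-adic vanishing-cycles version. For $d=4$ the analogue is easier --- one needs only $\Cl(A)=0$, the piece $\overline{A_2(A)}_{\QQ}$ being covered directly by (\ref{con3}) --- and $d=6$ is the first dimension in which the genuinely non-formal input, the codimension-two comparison, is required.)

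The main obstacle is precisely this last step: making the comparison between numerical equivalence on $A_{d-2}(A)$ and the \'etale cohomology of $U$ precise and unconditional, and combining it cleanly with the connectivity of the link; everything afterwards is formal bookkeeping as in the first paragraph. As a remark, one can also organize the reduction through the Grothendieck--Serre dual involution $D$ on $\overline{\underline{G_0}(A)}_{\QQ}$: by Lemma~\ref{dual} it acts on $\overline{A_k(A)}_{\QQ}$ as $(-1)^{d-k}$, and by Buchweitz's identity $\theta^A(M,N)=(-1)^{d+1}\theta^A(M^{*},N^{*})$ (see \cite[Proposition~4.3]{Da2}) it preserves $\theta^A$, so $\theta^A$ splits as an orthogonal sum of forms on the even- and odd-dimensional graded pieces; but once $\overline{A_4(A)}_{\QQ}=\overline{A_5(A)}_{\QQ}=0$ is known the only surviving pieces are $\overline{A_1},\overline{A_2},\overline{A_3}$, all annihilated by (\ref{con3}), so for $d=6$ this refinement is unnecessary.
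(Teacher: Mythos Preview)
Your proposal has a genuine gap, and it is exactly the step you yourself flag as ``the main obstacle'': the vanishing $\overline{A_4(A)}_{\QQ}=0$. That statement is the $d=6$ instance of Conjecture~\ref{conj}(1)(a), which is open outside the quasi-homogeneous situation of Proposition~\ref{affirm}. The Milnor connectivity argument you sketch needs a complex-analytic (or at least finite-type, characteristic-zero) setting, and even there the passage from $H^4(U)=0$ to $\overline{A_4(A)}_{\QQ}=0$ requires a cycle class map on the punctured spectrum of a local ring together with a comparison theorem that is not available in the stated generality. You also invoke Theorem~\ref{kazu1} to make $\theta^A$ factor through $\overline{G_0(A)}$, but that theorem assumes a resolution of singularities, a hypothesis absent from the proposition. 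So as written, your argument proves the proposition only under substantial extra assumptions, and conditionally on an open conjecture.

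The paper's proof is entirely different and purely algebraic; no resolution, no cohomology of the link, no Chow-group vanishing beyond codimension one. The reduction is: via a Bourbaki sequence and Grothendieck parafactoriality (which you do use), every MCM class in $\underline{G_0}(A)$ equals $[A/I]$ for some height-two ideal $I$ with $A/I$ Cohen--Macaulay of dimension $4$. Setting $M=\syz^2(A/I)$ and $N=\syz^2(A/J)$, the duality identity from \cite[4.2,\,4.3]{Da2} gives $\theta^A(M,N)+\theta^A(M^*,N^*)=0$. Lemma~\ref{dual} shows $[M^*]=[A/I]+\alpha$ and $[N^*]=[A/J]+\beta$ in $\underline{G_0}(A)$ with $\dim\alpha,\dim\beta\le 3$; hypothesis~(\ref{con3}) kills every term containing $\alpha$ or $\beta$, and one is left with $2\,\theta^A(A/I,A/J)=0$.

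The structural point: rather than proving that codimension-two classes are numerically trivial, the paper shows that the duality involution fixes their $\theta^A$-contribution \emph{modulo classes of dimension $\le 3$}, and then the identity $\theta^A(M,N)+\theta^A(M^*,N^*)=0$ forces that contribution to vanish. Ironically, your final paragraph mentions exactly this involution and then sets it aside as ``unnecessary''; in fact it is the whole engine of the argument, and it replaces the cohomological input you were looking for.
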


\begin{proof}
Clearly (\ref{con1}) implies (\ref{con3}). Clearly,  it is enough to show (\ref{con1}) for MCM modules. Let $M$ be such a module. Then take a Bourbaki sequence:
$$ \ses{F}{M}{I}$$

Since $d=6$, we know that $A$ is a UFD. It follows that $\height I \geq 2$ and $\dim A/I \leq 4$. But counting depths shows that $\depth I\geq 5$, and thus $\depth A/I  = \dim A/I =4$. Hence it will be enough to prove $\h{A}{A/I}{A/J} = 0$ for any height two ideals $I,J$ such that the quotients are Cohen-Macaulay.   
 
Now we let $M = \syz^2(A/I)$ and $N = \syz^2(A/J)$. By \cite[4.2, 4.3]{Da2} we know:
$$\h{A}{M}{N} + \h{A}{M^*}{N^*} =0$$

By Lemma \ref{dual} we know that  $[M^*] =  [A/I] +\alpha$ and $[N^*] = [A/J] +\beta$ in $\underline{G_0}(A)$, with $\dim \alpha$ and $\dim \beta$ at most three. Evidently, we also know $[M] = [A/I]$ and $[N] = [A/J]$ in $\underline{G_0}(A)$.  Since $\theta^A$ vanishes when one of the argument has dimension at most three, it follows then that:
$$
\h{A}{A/I}{A/J}  + \h{A}{A/I}{A/J}   =0
$$    
as we wanted.
\end{proof}

\begin{cor}
Let  $A$ be an excellent local hypersurface with isolated singularity
containing a field of characteristic $0$ and suppose $\dim A\leq 6$. Then Conjecture \ref{modconj}(3) holds. 
\end{cor}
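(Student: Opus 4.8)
The statement to be proved is the corollary asserting that for an excellent local hypersurface $A$ with isolated singularity, containing a field of characteristic $0$, with $\dim A \le 6$, Conjecture~\ref{modconj}(3) holds. The plan is to reduce everything to the vanishing results already assembled in this section, case by case on $d = \dim A$.

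First I would handle the trivial and even-dimensional cases. For $d$ even, the relevant instance of Conjecture~\ref{modconj}(3) says $\theta^A(M,N)=0$ whenever $\dim M \le d/2$; but in characteristic $0$ the full Conjecture~\ref{modconj}(1) holds by Buchweitz--van Straten \cite{BV} (as recorded in the Remark preceding Lemma~\ref{sumext}), which already gives $\theta^A(M,N)=0$ for \emph{all} $M,N$ when $d$ is even. So $d = 2,4,6$ are immediate. For $d = 0$ there is nothing to prove, and for $d=1$ the condition $\dim M \le 1/2$ forces $M$ to have finite length, whence $\theta^A(M,N)=0$ since a finite-length module over a one-dimensional hypersurface with isolated singularity has finite projective dimension and $\theta$ vanishes when one argument has finite projective dimension.

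The substantive cases are therefore $d = 3$ and $d = 5$. For $d=3$: Conjecture~\ref{modconj}(3) requires $\theta^A(M,N)=0$ when $\dim M \le 3/2$, i.e. $\dim M \le 1$; by additivity and Lemma~\ref{dual} (or directly, reducing modulo free summands) we may assume $M$ has dimension $\le 1$, and then $\theta^A(M,-)$ vanishes because such $M$ is numerically equivalent to $0$ in $\underline{G_0}(A)$ — this is exactly the content of Corollary~\ref{cornum}(1) once we know the class of a module of dimension $\le d-2$ is numerically equivalent to zero, which follows from the properties of $\tau$ (a cycle supported in codimension $\ge 2$ pairs trivially against localized Chern characters of complexes with finite-length homology when $\dim M < d/2$), using that an excellent hypersurface of characteristic $0$ with isolated singularity admits a resolution by the remark after Theorem~\ref{maintheorem1}. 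Alternatively, and more cheaply, $\dim M \le 1$ over a $3$-dimensional isolated hypersurface singularity means $M_\p$ has finite projective dimension for all $\p$, and one invokes the part (2) case already proved in \cite{Da1} for the excellent equicharacteristic case. For $d=5$: Conjecture~\ref{modconj}(3) requires vanishing when $\dim M \le 5/2$, i.e. $\dim M \le 2$. Here I would mimic the argument of the preceding Proposition (the $\dim A = 6$ case): reduce to $M$ maximal Cohen--Macaulay via a syzygy, use a Bourbaki sequence $\ses{F}{M}{I}$ (legitimate since $d=5 \ge 4$ forces $A$ to be a UFD, so $I$ can be taken a prime of height $\ge 2$), apply Buchweitz's relation $\theta^A(M,N) + (-1)^{d+1}\theta^A(M^*,N^*) = \theta^A(M,N) - \theta^A(M^*,N^*)$ wait — for $d=5$, $(-1)^{d+1}=1$, so the relation gives $\theta^A(M,N) = -\theta^A(M^*,N^*)$; combined with \cite[4.2,4.3]{Da2} giving $\theta^A(M,N) + \theta^A(M^*,N)=0$ (for $M$ MCM of the appropriate syzygy type) and Lemma~\ref{dual} identifying $[M^*]$ with $[A/I]$ plus lower-dimensional terms, one pushes the dimension of one argument below $d/2$ and closes the loop, exactly as in the $d=6$ Proposition.

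The main obstacle is the $d=5$ case: one must verify that the Bourbaki/syzygy manipulation together with the duality relations of \cite{Da2} genuinely reduces the dimension of the relevant argument below $5/2 = 2.5$ (i.e. to dimension $\le 2$), and that the lower-dimensional correction terms $\alpha,\beta$ produced by Lemma~\ref{dual} have dimension strictly less than $d - a$ where $a$ is the codimension — for $I$ of height $2$ this gives dimension $< 3$, i.e. $\le 2$, which is exactly the threshold $\le d/2$. So the bookkeeping is tight and must be checked carefully; everything else is a routine assembly of results already in hand, plus the observation that excellent characteristic-$0$ hypersurfaces with isolated singularities always admit the resolution required to invoke Corollary~\ref{cornum}.
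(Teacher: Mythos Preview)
The paper states this corollary without proof, so there is no explicit argument to compare against; the intended proof is presumably a short assembly of facts already recorded. Your even-dimensional cases are fine, but the odd-dimensional cases have real problems.

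For $d=1$: your claim that a finite-length module over a one-dimensional isolated hypersurface singularity has finite projective dimension is false (take $k$ over $k[[x,y]]/(xy)$). The correct reason is simply that $\dim M + \dim N \le 0 + 1 = d$, so Conjecture~\ref{modconj}(2), known in the excellent equicharacteristic case by \cite{Da1}, applies directly.

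For $d=3$: invoking part~(2) is the right idea, but you never reduce $\dim N$. With $\dim M\le 1$ and $\dim N$ possibly $3$ you have $1+3>3$, so (2) does not apply as stated. You must first write $[N]=r[A]+[T]$ in $G_0(A)$ with $\dim T\le 2$; then $\dim M+\dim T\le 3=d$ and (2) finishes it. Your first suggestion, that $\dim M\le 1$ forces $[M]$ to be numerically trivial, is unproven (it is essentially Conjecture~\ref{conj}(2)).

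For $d=5$: this is where your plan breaks down. You replace $M$ (which has $\dim M\le 2$) by a syzygy and then a Bourbaki quotient $A/I$; but $\dim A/I$ can be $3$, so you have \emph{raised} the dimension of the first argument above $d/2$, and the duality manipulation you sketch goes in circles rather than reducing to a known vanishing. The clean argument keeps $M$ fixed and instead reduces $N$: since $d\ge 4$, Grothendieck's parafactoriality theorem makes $A$ a UFD, so every height-one prime is principal and $[A/P]=0$ in $\underline{G_0}(A)$ for such $P$. Hence $\underline{G_0}(A)$ is generated by classes $[A/Q]$ with $\dim A/Q\le d-2=3$, and then $\dim M+\dim A/Q\le 2+3=5=d$, so (2) again applies.

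In summary, the uniform mechanism for all odd $d\le 5$ is: reduce the \emph{second} argument in $\underline{G_0}(A)$ to classes of dimension at most $d-1$ (or $d-2$ when $A$ is a UFD), then invoke Conjecture~\ref{modconj}(2) from \cite{Da1}. Your proposal misses this reduction in each odd case and, for $d=5$, attempts to adapt the Proposition's $(3)\Rightarrow(1)$ machinery in the wrong direction.
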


Conjecture \ref{modconj}(5) is known in the standard graded case over a field of  characteristic $0$, see \cite{MPSW}. We shall establish it for $d = \dim A \leq 3$.

\begin{prop}
If $d=1$, Conjecture   \ref{modconj}(5) is true. 
\end{prop}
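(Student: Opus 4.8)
The plan is to reduce the statement $(-1)^{(d+1)/2}\theta^A(M,M)\ge 0$ with $d=1$, i.e.\ $-\theta^A(M,M)\ge 0$, to a concrete computation with matrix factorizations. First I would observe that over a one-dimensional hypersurface $A$ with isolated singularity, $\theta^A$ factors through $\underline{G_0}(A)_{\mathbb Q}$, and that every finitely generated module is (up to free summands and modules killed by $\theta^A$) equivalent to a maximal Cohen--Macaulay module; since $A$ is a hypersurface, such an $M$ is given by a matrix factorization $(\phi,\psi)$ of the defining equation $f$, with $\phi\psi=\psi\phi=f\cdot\mathrm{id}$ on a free module of some rank $r$. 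Then I would use the standard fact (recalled in Section~\ref{history}) that for MCM modules $\theta^A(M,N)=\ell(\Tor_2^A(M,N))-\ell(\Tor_1^A(M,N))$, and compute this directly for $N=M$ using the 2-periodic resolution coming from $(\phi,\psi)$.

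Concretely, the key step is the linear-algebra identity: if $M=\coker\phi$ for a matrix factorization $(\phi,\psi)$ over $A=S/(f)$ with $S$ a regular local ring of dimension $2$, then $\theta^A(M,M)$ can be expressed in terms of the $S$-module $\coker(\bar\phi)$ where $\bar\phi$ is $\phi$ reduced modulo $f$, or more precisely in terms of intersection multiplicities on $\Spec S$. I expect the cleanest route is to identify $-\theta^A(M,M)$ with $\ell_S\!\left(\coker\begin{pmatrix}\phi\end{pmatrix}\right)$-type quantities; in dimension one, after base-changing, $M\otimes_A M$ and the Tor modules become computable and one gets $-\theta^A(M,M)=\mathrm{e}$ for some intersection number that is manifestly nonnegative (it is a length). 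Another, perhaps slicker, option is to invoke Theorem~\ref{maintheorem}: there are finite-length finite-projective-dimension modules $N_1,N_2$ with $\ell(N_1)=\ell(N_2)$ and $\theta^A(M,M)=\chi^A(N_1,M)-\chi^A(N_2,M)$, and in dimension one $\chi^A(N_i,M)=\ell(N_i)\cdot\rank M - (\text{something nonnegative})$ by the single-variable structure, so the difference has a definite sign.

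The step I expect to be the main obstacle is pinning down the precise nonnegativity: in dimension one everything is close to explicit, but one must be careful that the ``error terms'' (contributions of torsion submodules, or of the non-free part of $M$) enter with the correct sign. I would handle this by reducing to $M$ reflexive of rank one, writing $M\cong I$ for a reflexive (hence, since $\dim A=1$ and $A$ is a hypersurface, divisorial) ideal, and then computing $\theta^A(A/I,A/I)$ via the short exact sequence $0\to I\to A\to A/I\to 0$ together with additivity of $\theta^A$; this expresses $\theta^A(M,M)$ as a sum of lengths of specific $\Tor$'s, all of finite length, whose alternating combination I can then show is $\le 0$. The rank-one reflexive case also immediately gives the ``equality iff free'' refinement that parallels Theorem~\ref{dim3}, though the proposition as stated only asks for the inequality.

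\begin{Remark}
\begin{rm}
In fact the $d=1$ case can also be deduced from Theorem~\ref{dim3} formally is not available (that theorem assumes $\dim A=3$), so the argument above is genuinely one-dimensional; the upshot is that for a one-dimensional hypersurface $-\theta^A$ is positive semidefinite, with the zero locus detecting freeness of reflexive rank-one modules.
\end{rm}
\end{Remark}
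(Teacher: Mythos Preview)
Your proposal is a collection of plausible-sounding strategies, but none of them is carried to the point where the sign of $\theta^A(M,M)$ is actually established. The matrix-factorization paragraph ends with ``I expect the cleanest route is\ldots'' and never produces an inequality; the appeal to Theorem~\ref{maintheorem} gives $\theta^A(M,M)=\chi^A(N_1,M)-\chi^A(N_2,M)$ with $\ell(N_1)=\ell(N_2)$, but your claimed form $\chi^A(N_i,M)=\ell(N_i)\cdot\rank M-(\text{nonnegative})$ gives no information about the \emph{difference}; and the reduction to $M$ reflexive of rank one is not justified---in dimension one $\underline{G_0}(A)_{\mathbb Q}$ typically has rank $>1$ (one less than the number of branches), so rank-one classes do not span, and even if they did you would still need to control the cross-terms $\theta^A(I,J)$ for distinct rank-one $I,J$. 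In short, the ``main obstacle'' you flag is never overcome.

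The paper's proof follows a completely different and much more direct route. Since $A$ is a reduced one-dimensional local ring, $G_0(A)_{\mathbb Q}$ has the explicit basis $[A/p_1],\ldots,[A/p_n]$ given by the minimal primes, and $[A]=\sum_i[A/p_i]$. One then computes the Gram matrix $\alpha_{ij}=\theta^A(A/p_i,A/p_j)$: for $i\neq j$ the ideals $p_i,p_j$ are principal and $p_i+p_j$ is $\m$-primary, so a direct calculation gives $\alpha_{ij}=\ell(A/(p_i+p_j))>0$; and $\theta^A(A,-)=0$ forces $\alpha_{ii}=-\sum_{j\neq i}\alpha_{ij}$. Thus the matrix $(\alpha_{ij})$ is exactly a (weighted) graph Laplacian with the sign flipped, and the standard identity
\[
\sum_{i,j}\alpha_{ij}a_ia_j \;=\; -\sum_{i<j}\alpha_{ij}(a_i-a_j)^2 \;\le\; 0
\]
finishes the proof, with equality iff all $a_i$ are equal, i.e.\ iff $[M]\in\mathbb Q[A]$. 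The key idea you are missing is this branch decomposition and the Laplacian structure it forces on the pairing; none of your three sketches would discover it.
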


\begin{proof}
Let $p_1,p_2,...,p_n$ be the minimal primes of $A$.
Then $G_0(A)_{\mathbb{Q}}$ has a basis consisting of 
the classes $[A/p_1],...,[A/p_n]$. In
particular, since $A$ has dimension 1 and is reduced, $[A] =
\sum_{1}^{n}[A/p_i]$. Let $\alpha_{ij} = \theta^A(A/p_i,A/p_j)$.
For $i \neq j$, $p_i + p_j$ is $m$-primary, and it is not hard
(using the resolution of $A/p_i$, noting that each $p_i$ is a principal ideal) to see that $\alpha_{ij} = \ell(A/(p_i+p_j)) >0$.
Since $\theta^A(A,A/p_i) = 0$, we must have $\alpha_{ii} =
-\sum_{j\neq i} \alpha_{ij}$. Now, for a  module $M$,  $[M] =
\sum a_i[A/p_i]$, here $a_i$ is the rank of $M_{p_i}$. Then 
$$ 
\h{A}{M}{M} = \sum_{i,j} \alpha_{ij}a_ia_j = -\sum_{i<j} \alpha_{ij}(a_i-a_j)^2 \leq 0 .
$$
Clearly, the equality happens iff   $a_1 = a_2 = ... =a_n$. But  then $[M]=a_1[A]$, so $\theta^A(M,M) =
a_1\theta^A(A,M)= 0$. 
\end{proof}

Next, we look at dimension $3$. The proof here follows and improves the main result of \cite{Da4}. The divisor class map as described in Section \ref{prep} will play an important role. We start with a more general result:

\begin{thm}\label{mainTheorem}
Let $A$ be (abstract) local hypersurface of dimension $3$. Let $M,N$ be reflexive $A$-modules which are  locally free  on $U_A= \spec A -\{m\}$, the punctured spectrum of $A$. Suppose $\Hom_A(M,N)$ is a maximal Cohen-Macaulay $A$-module. Then $\h{A}{M^*}{N}\leq 0$. Furthermore, equality happens if and only if $M$ or $N$ is free. 
\end{thm}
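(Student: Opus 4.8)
The plan is to reduce the statement to a computation with matrix factorizations, exploiting the $2$-periodicity that underlies $\theta^A$. First I would write $A = S/(f)$ for a regular local ring $S$ and $f \in S$, and present $M$ and $N$ via matrix factorizations $(\phi,\psi)$ and $(\phi',\psi')$ of $f$. Since $M, N$ are reflexive and locally free on the punctured spectrum $U_A$, one checks (using that $A$ has dimension $3$, so depth considerations force $M,N$ to be second syzygies and hence maximal Cohen--Macaulay, or else one replaces them by high syzygies without changing the class in $\underline{G_0}(A)$) that $\theta^A(M^*,N)$ is computed by $\ell(\Tor_2^A(M^*,N)) - \ell(\Tor_1^A(M^*,N))$. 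The key identity to establish is that this difference equals, up to sign, a length that measures the failure of $\Hom_A(M,N)$ to be free; more precisely, I would relate $\theta^A(M^*,N)$ to $\ell$ of the cokernel/kernel of the natural evaluation or composition map involving $\Hom_A(M,N)$, which is where the hypothesis that $\Hom_A(M,N)$ is maximal Cohen--Macaulay enters decisively.

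The second and more structural step is to use the divisor class map $c_1 : G_0(A) \to A_{d-1}(A) = \Cl(A)$ recalled in Section~\ref{prep}, together with Theorem~\ref{dim3} (the positive semidefiniteness of $\theta^A$ in dimension $3$, i.e.\ $\theta^A(P,P) \le 0$ in the relevant sign convention) applied to a suitable combination of $M$ and $N$. The trick I expect to work: consider $P = M^* \oplus N$ or a Bourbaki-type extension, and expand $\theta^A(P,P)$ using biadditivity; the cross term is $2\theta^A(M^*,N)$ (using a duality such as $\theta^A(M^*,N) = \theta^A(N^*,M)$, or Buchweitz's relation $\h{A}{M}{N} + (-1)^{d+1}\h{A}{M^*}{N^*}=0$ cited from \cite[4.3]{Da2}), and the diagonal terms $\theta^A(M^*,M^*)$ and $\theta^A(N,N)$ can be controlled because $M,N$ are rank-one-like or because $\Hom_A(M,N)$ being MCM forces one of these to vanish. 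Semidefiniteness then pins down the sign of $\theta^A(M^*,N)$.

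For the equality clause, I would invoke the equality statement in Theorem~\ref{dim3}: if $M$ is reflexive of rank one then $\theta^A(M,M) = 0$ forces $M$ free. So after arranging that $\theta^A(M^*,N) = 0$ propagates (via the expansion above and the Cauchy--Schwarz-type inequality coming from a positive semidefinite form) to $\theta^A$ vanishing on $M$ or on $N$ individually, the rank-one case gives freeness directly, and the general reflexive case is handled by reducing to rank one via the determinant / $c_1$ and the fact that on $U_A$ (punctured spectrum of a $3$-dimensional isolated singularity) $\theta^A$ only sees the class in $\Cl(A)$, which is detected by $\det M$.

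\textbf{Main obstacle.} The delicate point is the first step: producing the clean identity expressing $\theta^A(M^*,N)$ in terms of a single length attached to $\Hom_A(M,N)$, and verifying that the MCM hypothesis on $\Hom_A(M,N)$ is exactly what makes one of the two Tor-lengths vanish (so that $\theta^A(M^*,N)$ becomes $\pm\ell$ of something, automatically of one sign). This requires a careful analysis of the matrix-factorization resolution of $M^* \tensor N$ versus the module $\Hom_A(M,N) \cong M^* \tensor N$ modulo torsion, and keeping track of which homological degree survives; the depth bookkeeping (codepth, syzygies, when reflexive equals MCM in dimension $3$) is routine but must be done precisely to avoid sign errors and off-by-one shifts in the $2$-periodic tail.
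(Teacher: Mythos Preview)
Your proposal has a genuine circularity problem. You plan to invoke Theorem~\ref{dim3} (positive semidefiniteness of $\theta^A$ in dimension $3$) to prove Theorem~\ref{mainTheorem}, but in the paper the logical flow is the reverse: Theorem~\ref{dim3} (restated and proved as Theorem~\ref{UFD} in Section~\ref{psd}) is deduced \emph{from} Theorem~\ref{mainTheorem} by specializing to $M=N=I$ a reflexive ideal, where $\Hom_A(I,I)\cong A$ is automatically MCM. So your second ``structural step'' is simply not available at this point.

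Even setting the circularity aside, the Cauchy--Schwarz idea does not yield a sign. For a semidefinite form $Q$, expanding $Q(M^*\oplus N,\,M^*\oplus N)\geq 0$ gives $Q(M^*,M^*)+2Q(M^*,N)+Q(N,N)\geq 0$, which bounds $Q(M^*,N)$ from below by a negative quantity, not from above by zero. Your claim that the diagonal terms ``can be controlled because $M,N$ are rank-one-like or because $\Hom_A(M,N)$ being MCM forces one of these to vanish'' is unsupported: $M$ and $N$ here are arbitrary reflexive modules of any rank, and the MCM hypothesis on $\Hom_A(M,N)$ says nothing about $\theta^A(M^*,M^*)$ or $\theta^A(N,N)$.

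The paper's actual argument realizes your first step concretely, with a specific auxiliary module rather than a vague matrix-factorization computation. Let $M_1$ be the cokernel of $F_1^*\to F_2^*$ obtained by dualizing the minimal resolution of $M$. A four-term exact sequence (\cite{Ha}, \cite{Jo}, \cite{Jor}) shows that $\Ext^1_A(M,N)=0$ --- which follows from $\Hom_A(M,N)$ being MCM via \cite[Lemma~2.3]{Da3} --- forces $\Tor_1^A(M_1,N)=0$. A change-of-rings argument over $S$ (using $\pd_S N\leq 2$ since $N$ is reflexive) kills $\Tor_3^A(M_1,N)$, and $2$-periodicity then gives $\theta^A(M_1,N)=\ell(\Tor_2^A(M_1,N))\geq 0$. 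Since $[M^*]=-[M_1]$ in $\underline{G_0}(A)$, one concludes $\theta^A(M^*,N)\leq 0$. The equality case is handled by the depth formula and Tor-rigidity over hypersurfaces (\cite{HW2}, \cite{Mil}), not by any semidefiniteness statement.
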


\begin{proof}
First, there is no loss of generality by completing everything in sight, so we will assume $A$ is complete. So $A\cong S/(f)$, where $S$ is a regular local ring of dimension $4$.

Suppose that $\Hom_A(M,N)$ is maximal Cohen-Macaulay. Then by \cite[Lemma 2.3]{Da3} we have $\Ext^1_A(M,N)=0$. 
One has the following short exact sequence (see \cite[3.6]{Ha} or \cite{Jo}, \cite{Jor}):
$$ \Tor_2^A(M_1,N) \to \Ext^1_A(M,A)\tensor_A N \to \Ext^1_A(M,N) \to \Tor_1^A(M_1,N) \to 0 $$
Here $M_1$ is the cokernel of $F_1^* \to F_2^*$, where $\cdots \to F_2 \to F_1\to F_0 \to M \to 0$ is the minimal free resolution of $M$.  Since $\Ext^1_A(M,N)=0$, it follows that $\Tor_1^A(M_1,N) =0$.

The change of rings long exact sequence for $\Tor$ (\cite{HW1}) yields a surjection 
\[
\Tor_3^S(M_1,N) \twoheadrightarrow \Tor^A_3(M_1,N) .
\]
 We claim that $\Tor_i^S(M_1,N)=0$ for $i\geq 3$.  As $N$ is reflexive we have $\depth N\geq 2$. It follows that as an $S$-module, the depth of $N$ is also at least $2$, so $\pd_SN\leq 4-2=2$ and our assertion follows.    

It follows that $\Tor^A_3(M_1,N)=0$. Also, $\Tor_i^A(M_1,N)$ becomes periodic of period $2$ after $i\geq 2$. So $\h {A}{M_1}{N} = \ell(\Tor_2^A(M_1,N))\geq 0$. 

Finally, we have by definition a long  sequence: $$ 0 \to M^* \to F_0^* \to F_1^* \to F_2^* \to M_1 \to 0 $$  
By the acyclic lemma, we know $\Ext^1_A(M,A)=0$. So in  
$\underline{G_0}(A)_{\mathbb Q}$ we have $[M^*]=-[M_1]$, and as $N$ is locally free  on $U_A$ it follows that $\h{A}{M^*}{N}= - \h{A}{M_1}{N}\leq 0$. 

Now we prove the last claim. Looking at the proof, we see that equality happens if and only if $\Tor_2^A(M_1,N) =  0$. As $\Tor_1^A(M_1,N) =0$, it now follows that $\Tor_i^A(M_1,N)=0$ for all $i>0$. 
Note that the long exact sequence at the beginning of the proof now implies $\Ext^1_A(M,A)=0$, thus $M$ is MCM. It also follows that $M^*$ is third syzygy of $M_1$, so  $\Tor_i^A(M^*,N)=0$ for all $i>0$. By the depth formula (Proposition~2.5 in \cite{HW1}) one has
$$\depth M^* +\depth N =\depth A + \depth M^*\otimes_A N $$
Thus $\depth M^*\otimes_A N =\depth N \geq 2$.  But the canonical map $M^*\otimes_A N \to \Hom_A(M,N)$ has finite length kernel and cokernel, so it must be an isomorphism. We conclude that $\depth N=3$, i.e., $N$ is also MCM.

As shown above, $\Tor_i^A(M^*,N)=0$ for all $i>0$, so either $M^*$ or $N$ has finite projective dimension by \cite[Theorem 1.9]{HW2} or \cite[1.1]{Mil}. But since they are both MCM, one of them must be free.
\end{proof}

\begin{lem}\label{mainLem}
Let $A$ be a local hypersurface of dimension $3$ with isolated singularity and $M,N$ be reflexive $A$-modules. Let $[I],[J]\in \Cl(A)$ represent $c_1([M]),c_1([N])$ respectively. We have $\h{A}{M}{N}=\h{A}{I}{J}$. Furthermore $\h{A}{M}{N}= - \h{A}{M}{N^*}$.
\end{lem}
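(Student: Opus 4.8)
The plan is to push $\theta^A$ down to the divisor class group via the coniveau (dimension-of-support) filtration on $G_0(A)$, and then to read off the second identity from the bilinearity of the resulting pairing together with the behaviour of $c_1$ under dualization. Before anything else I record that, since $A$ has an isolated singularity and $\dim A=3$, each localization $A_\p$ with $\p\neq\m$ is regular of dimension $\le 2$, so every reflexive module is free there; thus $M,N$ are locally free on the punctured spectrum, $\Tor^A_i(M,N)$ has finite length for all $i>0$, and $\theta^A$ is a well-defined $\ZZ$-bilinear form on $G_0(A)$ with $\theta^A([A],-)=0$.

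For the first identity I will use the filtration $F_1\subseteq F_2\subseteq G_0(A)$ by dimension of support: $\rk$ identifies $G_0(A)/F_2$ with $\ZZ$ and $c_1$ identifies $F_2/F_1$ with $A_2(A)=\Cl(A)$, which is precisely the content of the properties of $c_1$ recalled in Section~\ref{prep}. Hence, for reflexive $M$ of rank $r$ and the chosen ideal $I$ with $c_1([I])=c_1([M])$, the class $[M]-(r-1)[A]-[I]$ has rank $0$ and trivial $c_1$, so it lies in $F_1$; writing $[N]$ likewise as $(s-1)[A]+[J]+\delta$ with $\delta\in F_1$ and using bilinearity together with $\theta^A([A],-)=0$, I reduce $\theta^A(M,N)$ to $\theta^A(I,J)$ plus three terms each of which has an argument in $F_1$. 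These are killed by the vanishing of $\theta^A(X,Y)$ whenever one of $X,Y$ has dimension $\le 1$: this is Conjecture~\ref{modconj}(3) for $d=3$, which follows from Conjecture~\ref{modconj}(2) — established in the excellent equicharacteristic case in \cite{Da1} — via the device $[\mathfrak a]=[A]-[A/\mathfrak a]$, which brings one argument down to dimension $\le 2$. This yields $\h{A}{M}{N}=\h{A}{I}{J}$.

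For the second identity, $N^*$ is again reflexive of rank $s$ and $c_1([N^*])=-c_1([N])$, because $\det(N^*)\cong(\det N)^*$ and $[L]+[L^*]=0$ in $\Cl(A)$ for a rank-one reflexive $L$. Choosing a divisorial ideal $J'$ with $c_1([J'])=-c_1([J])$, the first part gives $\h{A}{M}{N^*}=\h{A}{I}{J'}$; since $[J]+[J']$ has rank $2$ and trivial $c_1$ one has $[J']=2[A]-[J]+\eta$ with $\eta\in F_1$, hence $\theta^A(I,J')=-\theta^A(I,J)$ after discarding $\theta^A(I,A)=0$ and the $F_1$-term. Therefore $\h{A}{M}{N^*}=-\h{A}{I}{J}=-\h{A}{M}{N}$. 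Conceptually: $\theta^A$ descends through $c_1$ to a bilinear pairing on $\Cl(A)$, and any such pairing is odd in each variable.

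The only genuinely non-formal ingredient is the vanishing of $\theta^A$ when one argument has dimension $\le 1$ — equivalently, Conjecture~\ref{modconj}(2) including the boundary case $\dim X+\dim Y=\dim A$ — and this is where I expect the real difficulty (or the appeal to earlier results) to be concentrated. Everything else is bookkeeping with the coniveau filtration together with the standard facts that reflexive modules over a three-dimensional isolated hypersurface singularity are locally free on the punctured spectrum and that $c_1$ anticommutes with dualization.
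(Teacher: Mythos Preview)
Your overall strategy---reduce $\theta^A$ through the dimension filtration on $G_0(A)_{\QQ}$ to a pairing on $\Cl(A)$, then read off the second identity from $c_1([N^*])=-c_1([N])$---matches the paper's exactly. The paper also writes $[N]-[J]$ in $\underline{G_0}(A)_{\QQ}$ as a combination of $[A/P_i]$ with $\height P_i\ge 2$ and then kills each $\theta^A(M,A/P_i)$; the second statement is handled the same way you do.

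The one substantive difference is how the vanishing $\theta^A(M,A/P)=0$ for $\height P=2$ is obtained. You invoke Conjecture~\ref{modconj}(2) via \cite{Da1}, which needs $A$ excellent and equicharacteristic---hypotheses not present in the lemma. The paper instead uses a \emph{moving} argument that works unconditionally: by the refined Bourbaki sequence of \cite[Theorem~1.4]{HJW} one can choose $\ses{F}{M}{I'}$ with $I'\not\subseteq P$. Then $I'+P$ is $\m$-primary, so $A/I'\otimes_A A/P$ has finite length, and Hochster's original result \cite{Ho1} identifies $\theta^A(A/I',A/P)$ with Serre's intersection multiplicity in the ambient regular ring, which vanishes since $\dim A/I'+\dim A/P\le 3<4$. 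This sidesteps the boundary case of Conjecture~\ref{modconj}(2) entirely. So your proof is correct modulo the extra hypotheses you flag, but the Bourbaki trick is precisely the device that removes them, and it is the idea you are missing.
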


\begin{proof}
It is not hard to see that in $\underline {G_0}(A)_{\mathbb Q}$, the reduced Grothendieck group with rational coefficients, we have an equality $[N] -[J] = \sum a_i[A/P_i]$ such that each  $P_i \in \spec A$ has height at least $2$ (see the proof of 3.1 in \cite{Da4} ).     

The first half will be proved by showing that $\h AM{A/P}=0$ for each $P \in \spec A$ such that $\height P= 2$. 

By  \cite[Theorem 1.4]{HJW} one can  construct a Bourbaki sequence for $M$:
$$\ses FMI $$
such that $I \not\subseteq P$.  Obviously $\h AM{A/P} =\h AI{A/P}$.  But $A/I\tensor_AA/P$ has finite length, and $\dim A/I +\dim A/P \leq 3 =\dim A $. By \cite{Ho1},  $\h A{A/I}{A/P}=0$. Since $\h AI{A/P}=-\h A{A/I}{A/P}$, we have $\h AM{A/P}=0$. 

The last statement follows from $[N^*] - [J^*] = 
\sum b_i[A/P_i]$ in $\underline{G_0}(A)$, where each $P_i \in \spec A$ has height at least $2$.
\end{proof}

The following contains Corollary~\ref{dim3} in the Introduction. 

\begin{thm}\label{UFD}
Let $A$ be a local  hypersurface with isolated singularity and $\dim A=3$. Let $M$ be a finitely generated $A$-module. Then $ \h{A}{M}{M} \geq 0$.   The equality happens if and only if $c_1([M])= 0$ in  $\Cl(A)$. 
\end{thm}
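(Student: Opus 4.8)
The plan is to reduce Theorem~\ref{UFD} to the case already settled in Theorem~\ref{mainTheorem} together with the structural facts assembled in Lemma~\ref{mainLem} and Lemma~\ref{dual}. First I would observe that, since $\theta^A(-,-)$ is additive on short exact sequences and vanishes as soon as one argument has dimension $\le 1$ (that is, codimension $\ge 2$; this is the dimension-pair vanishing of \cite{Ho1}, used already in the proof of Lemma~\ref{mainLem}), the value $\theta^A(M,M)$ depends only on the class of $M$ in $\underline{G_0}(A)_{\mathbb Q}$ modulo cycles supported in codimension $\ge 2$. Writing a Bourbaki-type sequence $\ses{F}{M}{I}$ with $I$ a reflexive (hence rank-one, height-$\ge 2$) ideal representing $c_1([M])\in \Cl(A)$, one gets $\theta^A(M,M)=\theta^A(I,I)$ by Lemma~\ref{mainLem}, so it suffices to prove $\theta^A(I,I)\ge 0$ for a reflexive ideal $I$, with equality iff $[I]=0$ in $\Cl(A)$.

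Next I would invoke Lemma~\ref{mainLem} once more to trade $I$ for its dual: $\theta^A(I,I)=-\theta^A(I,I^*)=-\theta^A(I^*,I)$ (using symmetry of $\theta^A$ and the last assertion of Lemma~\ref{mainLem}). Now I want to apply Theorem~\ref{mainTheorem} with $M=N=I$ — but that theorem requires $\Hom_A(I,I)$ to be maximal Cohen-Macaulay. Here is where the isolated-singularity hypothesis is essential: $I$ is locally free on the punctured spectrum $U_A$, so $\Hom_A(I,I)$ is reflexive and locally free on $U_A$; since $\dim A = 3$, a reflexive module that is locally free in codimension $2$ on a hypersurface (indeed any normal domain) has depth $\ge 2$, and one needs to upgrade this to depth $3$. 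The cleanest route is to note $\Hom_A(I,I)\cong \Hom_A(I^*,I^*)$ and that $I$ being reflexive of rank one over a Gorenstein normal domain forces $\Hom_A(I,I)=A$ when $I$ is locally free of rank one in codimension $1$ and... actually that is only true when $I$ is invertible, which it need not be. So instead I would handle the two cases directly: if $c_1([M])=0$ in $\Cl(A)$, then $[M]=\operatorname{rank}(M)[A]$ in $\underline{G_0}(A)_{\mathbb Q}$ modulo codimension-$\ge2$ cycles, hence $\theta^A(M,M)=\theta^A(M,A^{r})=0$ by additivity and $\theta^A(-,A)=0$; this gives both $\theta^A(M,M)=0$ in that case and reduces the inequality to the case $c_1([M])\ne 0$, where I must show strict positivity.

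For the strictly positive case, with $I$ a non-free reflexive ideal, I would apply Theorem~\ref{mainTheorem} to the pair $M=N=I$ after checking $\Hom_A(I,I)$ is MCM. If that hypothesis fails one can instead replace $I$ by a high syzygy: let $b=\operatorname{codepth}I$, $M'=\syz^b(I)$, which is MCM, and by Lemma~\ref{dual} $[M'^*]=(-1)^b\big((-1)^a[I]+(\text{codim}\ge 2)\big)$, so $c_1([M'^*])=\pm c_1([I])\ne 0$; moreover for MCM $M'$ over a $3$-dimensional hypersurface, $\Hom_A(M',M')$ is MCM exactly when $\Ext^1_A(M',M')=0$, which one arranges (or one argues that $\theta^A(M',M')=\theta^A(I,I)$ by the class-group reduction above regardless, and applies \cite[Lemma 2.3]{Da3}/\cite[4.2,4.3]{Da2} style vanishing as in Theorem~\ref{mainTheorem}). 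Then Theorem~\ref{mainTheorem} gives $\theta^A(M'^*,M')\le 0$ with equality iff $M'$ is free; combining with $\theta^A(I,I)=-\theta^A(I,I^*)$ and the identification of classes yields $\theta^A(M,M)=\theta^A(I,I)>0$. The main obstacle I anticipate is precisely the bookkeeping around the MCM hypothesis on $\Hom$ and making sure the sign juggling in Lemma~\ref{mainLem} lines up so that the $\le 0$ from Theorem~\ref{mainTheorem} becomes $\ge 0$ for $\theta^A(M,M)$ — this is where one must be careful that $c_1([M])\ne 0$ genuinely propagates through duals and syzygies so that the equality clause of Theorem~\ref{mainTheorem} (freeness) is incompatible with $c_1\ne 0$, forcing strict inequality.
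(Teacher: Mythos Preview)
Your overall strategy is the paper's: reduce $\theta^A(M,M)$ to $\theta^A(I,I)$ with $I$ a reflexive ideal representing $c_1([M])$, flip a sign via Lemma~\ref{mainLem} to get $-\theta^A(I^*,I)$, and feed this into Theorem~\ref{mainTheorem}. However there is a real gap, and it is exactly where you second-guess yourself.

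You write that $\Hom_A(I,I)\cong A$ ``is only true when $I$ is invertible.'' This is false, and it is the whole point. For \emph{any} nonzero ideal $I$ in a normal domain $A$, one has $\Hom_A(I,I)\cong A$: the natural map $\Hom_A(I,I)\hookrightarrow \Hom_K(I\otimes K,I\otimes K)\cong K$ identifies $\Hom_A(I,I)$ with the subring $\{r\in K: rI\subseteq I\}$, and every such $r$ is integral over $A$ by the determinant trick, hence lies in $A$ by normality. So $\Hom_A(I,I)\cong A$ is automatically MCM, and Theorem~\ref{mainTheorem} applies immediately with $M=N=I$, giving $\theta^A(I^*,I)\le 0$ with equality iff $I$ is free. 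Combined with Lemma~\ref{mainLem} this yields $\theta^A(M,M)=-\theta^A(I^*,I)\ge 0$, strict unless $[I]=0$ in $\Cl(A)$. This is precisely the paper's proof. Your detour through high syzygies $M'=\syz^b(I)$ is unnecessary, and in any case you never verify that $\Hom_A(M',M')$ is MCM (you say ``which one arranges'' without arranging it), so that branch does not close.

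A smaller issue: your opening reduction asserts that $\theta^A$ vanishes whenever \emph{one} argument has dimension $\le 1$. Hochster's result in \cite{Ho1} gives vanishing only when the tensor product has finite length \emph{and} the dimensions sum to at most $d$; the stronger statement you use is Conjecture~\ref{modconj}(3), not a theorem here. The paper avoids this by first replacing $M$ by a high syzygy so that $M$ is MCM (hence reflexive), and only then invoking Lemma~\ref{mainLem}, whose proof uses a Bourbaki moving argument to force the finite-length condition. You should do the same: take syzygies first, then quote Lemma~\ref{mainLem} as stated for reflexive modules.
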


\begin{proof}
By taking high syzygy we can assume that $M$ is maximal Cohen-Macaulay. 
Let $I$ represent $c_1([M])\in \Cl(A)$. 
Then $I^*$ represents $c_1([M^*])$. 
Since $\Hom_A(I,I)\cong A$ we know by Theorem \ref{mainTheorem} and Lemma \ref{mainLem} that $\h{A}{M}{M} \geq 0$, with equality if and only if $[I]$ is $0$ in $\underline {G_0}(A)_{\mathbb Q}$. 
This condition imply $c_1([M])=0$.  
\end{proof}

The following contains Corollary~\ref{DHM} in the Introduction. 

\begin{cor}\label{dhm}
Let $A$ be a local  hypersurface with isolated singularity and $\dim A=3$. Assume that the assumptions of Theorem \ref{maintheorem} holds. Let $M$ be a finitely generated, torsion $A$-module. The following are equivalent:

\begin{enumerate}
\item $[M]$ is numerically equivalent to $0$ in $G_0(A)$. 
\item $\theta^A(M,M) = 0$. 
\item $c_1([M])=0$ in $\Cl(A)$.
\end{enumerate}
\end{cor}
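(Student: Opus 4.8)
The plan is to prove the three implications $(1) \Rightarrow (2) \Rightarrow (3) \Rightarrow (1)$, using the machinery developed in the previous sections. The implication $(1) \Rightarrow (2)$ is immediate from Corollary~\ref{cornum}(1): if $[M]$ is numerically equivalent to zero, then $\theta^A(M,N) = 0$ for every finitely generated $A$-module $N$, in particular for $N = M$. The implication $(2) \Rightarrow (3)$ is exactly the content of Theorem~\ref{UFD}: that theorem states $\theta^A(M,M) \geq 0$ always, with equality if and only if $c_1([M]) = 0$ in $\Cl(A)$ (note that $M$ being torsion is not needed for this direction, but it is harmless).

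The remaining and main implication is $(3) \Rightarrow (1)$. Here I would argue as follows. Assume $M$ is torsion with $c_1([M]) = 0$. Since $M$ is torsion, $\dim M \leq d - 1 = 2$, so $[M] \in A_{d-1}(A) \oplus A_{d-2}(A) \oplus \cdots$ under the identification coming from $\tau$; more concretely, in $G_0(A)$ the class $[M]$ is a sum of classes $[A/P]$ with $\height P \geq 1$. The codimension-one part of $[M]$ is measured precisely by $c_1([M]) \in A_{d-1}(A) = \Cl(A)$, which vanishes by hypothesis. Hence $\dim([M]) \leq d - 2 = 1$, i.e. $[M]$ is represented by a formal combination of modules of dimension at most $1$. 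Now I would invoke Corollary~\ref{last} (or rather the fact behind it, essentially Proposition~\ref{affirm} together with the dimension-three case): for a three-dimensional isolated hypersurface singularity admitting a desingularization, $\overline{A_i(A)}_{\mathbb Q} = 0$ for $i \leq d/2 = 3/2$, so $\overline{A_0(A)}_{\mathbb Q} = \overline{A_1(A)}_{\mathbb Q} = 0$. Therefore any class supported in dimensions $\leq 1$ is numerically equivalent to zero, which gives $(1)$.

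I expect the main obstacle to be bookkeeping the relationship between $c_1$, the dimension filtration on $G_0(A)$, and numerical equivalence of the lower-dimensional pieces. Specifically, one must be careful that $c_1([M]) = 0$ in $\Cl(A) = A_{d-1}(A)$ really does kill the top graded piece of $[M]$ on the nose (not just up to numerical equivalence) — this is precisely the content of the bulleted properties of $c_1$ recalled in Section~\ref{prep}, namely $c_1([M]) = 0$ when $\dim M \leq d-2$ and $c_1$ detects exactly the codimension-one cycle associated to $M$. Once $[M]$ is pushed into the part of dimension $\leq d - 2 = 1$, vanishing modulo numerical equivalence follows from $\overline{A_0(A)}_{\mathbb Q} = \overline{A_1(A)}_{\mathbb Q} = 0$, which is guaranteed here by the desingularization hypothesis (via Proposition~\ref{affirm} in the graded case, and by the results of \cite{K23} under the resolution assumption in general). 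Finally I would remark that the torsion hypothesis on $M$ is what makes $c_1([M])$ (equivalently $\tau_{d-1}([M])$) the only obstruction, since for torsion $M$ the rank term $\tfrac{\rank_A M}{2} K_A$ vanishes.
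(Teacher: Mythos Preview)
Your implications $(1)\Rightarrow(2)$ (via Corollary~\ref{cornum}) and $(2)\Rightarrow(3)$ (via Theorem~\ref{UFD}) match the paper exactly. The reduction you perform at the start of $(3)\Rightarrow(1)$---that a torsion module with $c_1([M])=0$ has $[M]$ lying in the part of $G_0(A)$ generated by modules of dimension at most $1$---is also correct and is implicitly what the paper uses.

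The gap is in the last step, where you need classes supported in dimension $\le 1$ to be numerically trivial, i.e.\ $\overline{A_0(A)}_{\mathbb Q}=\overline{A_1(A)}_{\mathbb Q}=0$. Your justifications do not hold up. Invoking Corollary~\ref{last} is circular: that corollary is \emph{deduced from} Corollary~\ref{dhm} in the paper. Invoking Proposition~\ref{affirm} does not help either, since it only treats the graded case (and its part~(2) assumes Grothendieck's standard conjecture). Indeed, the vanishing $\overline{A_1(A)}_{\mathbb Q}=0$ for a three-dimensional complete intersection with isolated singularity is precisely part of Conjecture~\ref{conj} in this paper, not a theorem established here. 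The paper's own proof of $(3)\Rightarrow(1)$ closes this step by citing \cite[Proposition~3.7]{K23}, an external input you have not supplied; without that result (or an equivalent one) your argument for $(3)\Rightarrow(1)$ is incomplete.
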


\begin{proof}
The result follows from  Corollary~\ref{cornum}, Theorem~\ref{UFD} and \cite[Proposition 3.7]{K23}. 
\end{proof}

Finally it is worth pointing out that one can now compute the Grothendieck group modulo numerical equivalence for most isolated hypersurface singularities in dimension three.

\begin{cor}\label{last}
Let $A$ be a local  hypersurface with isolated singularity and $\dim A=3$. Assume that the assumptions of Theorem \ref{maintheorem} holds. Then the following holds:
\begin{enumerate}
\item $\overline {G_0(A)} = \mathbb Z[A] \oplus A_2(A)$.
\item The class group of $A$ is finitely generated and torsion free. 
\end{enumerate}
\end{cor}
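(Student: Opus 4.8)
The plan is to deduce Corollary~\ref{last} from the structural results already established, chiefly Theorem~\ref{kazu1} (equivalently Corollary~\ref{cornum}), Theorem~\ref{UFD}, and the finite generation/torsion-freeness of $\overline{G_0(A)}$ and $\overline{A_*(A)}$ coming from \cite{K23}. First I would recall that by the results cited in Section~\ref{prep} (Theorem~3.1 and Remark~3.5 of \cite{K23}), under our hypotheses $\overline{G_0(A)}$ is a finitely generated torsion-free abelian group, and the Riemann--Roch map together with Proposition~2.4 of \cite{K23} gives $\overline{A_*(A)}_{\QQ} = \bigoplus_{i=0}^{d} \overline{A_i(A)}_{\QQ}$. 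Since $d = \dim A = 3$, and since $\overline{A_0(A)}_{\QQ} = 0$ whenever $\dim A > 0$, the only potentially nonzero graded pieces are $\overline{A_3(A)}$ (which is $\ZZ[A]$, generated by the fundamental cycle of the domain $A$), $\overline{A_2(A)}$, and $\overline{A_1(A)}$.

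For part~(1), the key point is that $\overline{A_1(A)}_{\QQ} = 0$. This is exactly an instance of the conjecture $\overline{A_i(A)}_{\QQ} = 0$ for $i \le d/2$, and for a three-dimensional isolated hypersurface singularity $i = 1 \le 3/2$, so I would argue this piece vanishes. The argument goes through $\theta^A$ and $c_1$: an element of $A_1(A)$ is a combination of classes of modules of dimension at most one, and one shows such classes cannot be detected by any $\chi_{\Bbb F.}$; concretely, using the divisor class map $c_1$ and the fact (Lemma~\ref{mainLem} and the proof of Theorem~\ref{UFD}) that $\theta^A$ sees precisely the $c_1$-part, one gets that the only numerical invariants surviving are the rank (detecting $\overline{A_3(A)} = \ZZ[A]$) and $c_1$ (detecting $\overline{A_2(A)} = \Cl(A)$ modulo numerical equivalence). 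Since $c_1$ kills everything of dimension $\le d-2 = 1$, one concludes $\overline{A_1(A)}_{\QQ} = 0$, whence $\overline{G_0(A)} = \ZZ[A] \oplus \overline{A_2(A)}$, and since $A_2(A) = A_{d-1}(A)$ is identified with $\Cl(A)$ and no further relations appear at the top two levels, we get $\overline{G_0(A)} = \ZZ[A] \oplus A_2(A)$.

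For part~(2), I would show the class group $\Cl(A) = A_2(A) = A_{d-1}(A)$ coincides with $\overline{A_2(A)}$, i.e.\ that numerical equivalence is trivial on codimension-one cycles. The mechanism is Theorem~\ref{UFD}: for a reflexive rank-one module $M$ with divisor class $[I] \in \Cl(A)$, we have $\theta^A(M,M) \ge 0$ with equality if and only if $c_1([M]) = 0$; in particular if $[I] \ne 0$ in $\Cl(A)$ then $\theta^A(M,M) > 0$, so by Corollary~\ref{cornum} (part~(1)) $[M]$ is \emph{not} numerically equivalent to zero, hence $[I]$ survives in $\overline{A_2(A)}$. This gives that the natural surjection $\Cl(A) = A_2(A) \twoheadrightarrow \overline{A_2(A)}$ is injective, so $\Cl(A) \cong \overline{A_2(A)}$, which is a direct summand of the finitely generated torsion-free group $\overline{G_0(A)}$ and therefore itself finitely generated and torsion-free. (This is also where Corollary~\ref{dhm} and \cite[Proposition 3.7]{K23} enter, identifying the numerical class of a torsion module with its $c_1$.)

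The main obstacle I anticipate is cleanly justifying $\overline{A_1(A)}_{\QQ} = 0$: the surjections from Chow groups of the projective side used in Proposition~\ref{affirm} are not available for an abstract (non-graded) hypersurface, so one must instead argue directly that a module of dimension $\le 1$ contributes nothing to any $\chi_{\Bbb F.}$. The cleanest route is to package everything through Corollary~\ref{dhm}: a torsion module $M$ is numerically trivial iff $c_1([M]) = 0$, so once ranks and $c_1$ are accounted for, the reduced group modulo numerical equivalence has no room for an $A_1$-contribution. I would present the argument so that parts (1) and (2) are proved essentially simultaneously, with the decomposition $\overline{G_0(A)} = \ZZ[A] \oplus A_2(A)$ and the identification $A_2(A) = \Cl(A) = \overline{A_2(A)}$ being two facets of the same computation.
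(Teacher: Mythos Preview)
Your proposal reaches the right conclusion using the right ingredients (Corollary~\ref{dhm}, Theorem~\ref{UFD}, and the finite-generation/torsion-freeness results from \cite{K23}), but the route is considerably more circuitous than the paper's, and the detour introduces a soft spot you end up having to patch anyway.

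The paper's proof is two sentences: part~(1) is declared to follow from Corollary~\ref{dhm}, and part~(2) follows from part~(1) together with the fact (Theorem~3.1 and Remark~3.5 of \cite{K23}) that $\overline{G_0(A)}$ is finitely generated and torsion-free, so its direct summand $A_2(A)=\Cl(A)$ inherits both properties. There is no passage through Riemann--Roch, no $\QQ$-coefficients, and no separate analysis of $\overline{A_1(A)}_\QQ$.

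Your initial argument for part~(1) via the graded decomposition of $\overline{A_*(A)}_\QQ$ has two issues. First, the step ``$c_1$ kills everything of dimension $\le 1$, hence $\overline{A_1(A)}_\QQ=0$'' is a non sequitur: vanishing of $c_1$ on low-dimensional classes says nothing, on its own, about whether those classes are numerically trivial. You recognize this and retreat to Corollary~\ref{dhm}, which is exactly the paper's starting point. Second, even granting $\overline{A_1(A)}_\QQ=0$, the Riemann--Roch isomorphism lives in $\QQ$-coefficients, so you would still owe an argument for the integral decomposition $\overline{G_0(A)}=\ZZ[A]\oplus A_2(A)$; again, Corollary~\ref{dhm} is what actually delivers this (it characterizes numerical triviality of torsion classes integrally via $c_1$). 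For part~(2), your argument that $A_2(A)\to\overline{A_2(A)}$ is injective via positivity of $\theta^A$ is valid and pleasant, but the paper sidesteps it entirely by reading off part~(2) as a formal consequence of part~(1).
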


\begin{proof}
Statement (1) follows from Corollary \ref{dhm}. 
Since $\overline{G_0(A)}$ is finitely generated torsion-free
by Theorem~3.1 and Remark~3.5 in \cite{K23}, so is  $A_2(A)$. 
\end{proof}

\section*{Acknowledgments}
The proof of Theorem \ref{UFD} was essentially worked out  with Mark Walker during a visit of the first author to University of Nebraska in March 2010. The first author would like to thank University of Nebraska for its hospitality and Mark Walker for allowing him to include the result here. The second author would like to thank University of Kansas for its hospitality during his visits in May 2011 and July 2012, during which most of this work was done.

\end{document}